\newtheorem{thm}{Theorem}[section]
\newtheorem{prop}{Proposition}[section]
\newtheorem{lem}[prop]{Lemma}
\newtheorem{cor}[prop]{Corollary}
\theoremstyle{definition}
\newtheorem*{rem}{Remark}
\numberwithin{equation}{section}
\newcommand{\img}{\operatorname{img}}
\DeclareMathOperator{\Red}{\sf{Red}}
\DeclareMathOperator{\Ind}{\sf{Ind}}
\newcommand{\norm}[1]{\left\Vert#1\right\Vert}
\newcommand{\abs}[1]{\left\vert#1\right\vert}
\newcommand{\cx}{{\mathbb{C}}}
\newcommand{\rl}{{\mathbb{R}}}
\newcommand{\zs}{\mathscr{Z}}
\newcommand{\cls}[2]{\left[{#1}\right]_{{#2}}}
\newcommand{\ol}{\overline}
\newcommand{\dbar}{\ol\partial}
\newcommand{\wt}{\widetilde}
\newcommand{\wh}{\widehat}
\newcommand{\ea}{\mathcal{E}^{\mathrm{A}}}
\newcommand{\eh}{\mathcal{E}^{\mathrm{H}}}
\newcommand{\rhoa}{\rho^{\mathrm{A}}}
\newcommand{\rhoh}{\rho^{\mathrm{H}}}
\newcommand\ipr[1]{\left\langle #1 \right\rangle}
\DeclareMathOperator{\dom}{Dom}
\DeclareMathOperator{\range}{Range}
\newcommand{\db}{\overline{\mathfrak{d}}}
\title[Exact sequences]{Exact sequences and  estimates for the $\overline{\partial}$-problem}
\author{Debraj Chakrabarti}
\address{Department of Mathematics, Central Michigan University, Mt. Pleasant,  MI 48859,  USA}
\email{chakr2d@cmich.edu}
\author{Phillip S. Harrington}
\address{SCEN 309, 1 University of Arkansas, Fayetteville, AR 72701, USA}
\email{psharrin@uark.edu}
\thanks{Debraj Chakrabarti was partially supported by NSF grant  DMS-1600371.}
\subjclass[2010]{32W05}
\begin{document}
\maketitle
\begin{abstract}We study Sobolev estimates for solutions of the inhomogenous Cauchy-Riemann equations on annuli in $\cx^n$, by constructing exact sequences relating the Dolbeault cohomology
of the annulus  with respect to Sobolev spaces of forms with those of the envelope and the hole. We also obtain solutions  with prescibed support and estimates in Sobolev spaces using our method.
\end{abstract}

%\tableofcontents % remove this later.
\section{Introduction}
{The theory of $L^2$ and Sobolev estimates for the solutions to the $\dbar$-problem (i.e., the inhomogeneous Cauchy-Riemann equations) on domains in
$\cx^n$ is a classical topic in complex analysis.
Quite naturally, the main focus has been on pseudoconvex domains, where the positivity needed to
establish the required a priori estimates is available.  Some of these  results can be extended to certain non-pseudoconvex domains  satisfying weaker positivity conditions such as  $Z(q)$ (also called $a_q$ or $A_q$) or weak $q$-convexity (see \cite{hormander1965,follandkohn,ho1991}).}

Another fruitful approach to the study of the $\dbar$-problem has been to concentrate on the simplest class of non-pseudoconvex domains, the \emph{annuli}.
	By an \emph{annulus}, we mean a bounded domain $\Omega\subset \cx^n$ which can be represented in the form
	\begin{equation}
	\label{eq-annulusdef}
	\Omega= \Omega_1 \setminus\ol{ \Omega_2}
	\end{equation}
	where $\Omega_1$ and $\Omega_2$ are bounded  open sets in $\cx^n$, with $\ol{\Omega}_2\subset \Omega_1$, and $\Omega_1$  connected.
	We say that
	$\Omega_1$ is the \emph{envelope} and $\Omega_2$ the \emph{hole} of the annulus $\Omega$, where it is not assumed that the hole
	$\Omega_2$  is connected. In \cite{shawannulus1985}, Shaw considered the question of solvability and regularity of the $\dbar$-problem on an annulus where the
	envelope and the hole were both smoothly bounded pseudoconvex domains, using an adaptation of the weighted $\dbar$-Neumann technique of Kohn (\cite{kohn73}).
	Subsequently, it was realized that one could relate the function theory on the annulus with those of the hole and annulus and thus avoid solving the $\dbar$-equations
	directly on the annulus  (see \cite{shawclosed, meichiharmonic, lishaw, meichichristine, ChShTh, FLS} and subsequent work on this theme).  The goal of this paper is to understand and extend this method
	using  elementary homological algebra and  functional analysis of non-Hausdorff topological vector spaces (TVS). A consequence of our investigations is the following characterization of annuli in which the $\dbar$-problem can be solved
	with $L^2$-estimates:
	\begin{cor}[to Theorem~\ref{thm-short} below] \label{cor-intro} Let $n$ be a positive integer and $0\leq p,q \leq n$.
	The following are equivalent for an annulus $\Omega= \Omega_1 \setminus\ol{ \Omega_2}$ in $\cx^n$:
	\begin{enumerate}
		\item $H^{p,q}_{L^2}(\Omega)=0$
		\item $H^{p,q}_{L^2}(\Omega_1)=0$ and $H^{p,q+1}_{c,W^{-1}}(\Omega_2)=0$.
	\end{enumerate}
\end{cor}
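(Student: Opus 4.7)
The plan is to deduce the corollary directly from Theorem~\ref{thm-short}, whose very name suggests it furnishes a \emph{short} exact sequence of cohomology spaces. I expect this sequence to take the form
\[
0 \longrightarrow H^{p,q}_{L^2}(\Omega_1) \xrightarrow{\ \iota\ } H^{p,q}_{L^2}(\Omega) \xrightarrow{\ \delta\ } H^{p,q+1}_{c,W^{-1}}(\Omega_2) \longrightarrow 0,
\]
with $\iota$ induced by restriction of $L^2$-forms from the envelope to the annulus, and $\delta$ a connecting morphism which, at the cocycle level, takes an $L^2$-closed form $u$ on $\Omega$, extends it by zero across $\partial\Omega_2$, and records the (compactly supported, a priori only $W^{-1}$) current $\dbar$ of the extension.

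Once the correct sequence from Theorem~\ref{thm-short} has been identified, the proof of the corollary is a routine diagram chase in the abelian category of vector spaces: for any short exact sequence $0 \to A \to B \to C \to 0$, the middle term $B$ vanishes if and only if $A = 0$ and $C = 0$. Indeed, if $B=0$ then the injectivity of $A \hookrightarrow B$ forces $A=0$, and the surjectivity of $B \twoheadrightarrow C$ forces $C=0$; conversely, if both $A=0$ and $C=0$, then exactness at $B$ gives $\ker(\delta) = \img(\iota) = 0$, so $B \hookrightarrow C = 0$. Applied with $A = H^{p,q}_{L^2}(\Omega_1)$, $B = H^{p,q}_{L^2}(\Omega)$, $C = H^{p,q+1}_{c,W^{-1}}(\Omega_2)$, this yields precisely the equivalence $(1) \Leftrightarrow (2)$.

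The main obstacle is therefore not in the corollary itself but in invoking Theorem~\ref{thm-short}: one must verify that the statement of that theorem really produces an exact sequence with these three particular spaces in these three particular positions, and in the correct Sobolev scale. A subtle point worth checking is that the target of $\delta$ is $H^{p,q+1}_{c,W^{-1}}(\Omega_2)$ rather than, say, $H^{p,q+1}_{c,L^2}(\Omega_2)$: the loss of one derivative is exactly what is needed to make zero-extension of an $L^2$ form across the inner boundary produce a well-defined compactly supported current whose $\dbar$ is controlled in $W^{-1}$. Provided that Theorem~\ref{thm-short} is stated in this way, the corollary follows without further work; if instead the theorem is phrased as a long exact sequence, one extra step is needed to verify that the relevant boundary maps into $H^{p,q}_{L^2}(\Omega_1)$ and out of $H^{p,q+1}_{c,W^{-1}}(\Omega_2)$ vanish, which should again be automatic from the setup.
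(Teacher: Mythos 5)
Your proposal is correct and coincides with the paper's own route: Theorem~\ref{thm-short} with $s=-1$ gives exactly the short exact sequence $0\to H^{p,q}_{L^2}(\Omega_1)\to H^{p,q}_{L^2}(\Omega)\to H^{p,q+1}_{c,W^{-1}}(\Omega_2)\to 0$ (with $R^{p,q}_*$ the restriction map and $\lambda^{p,q}([f])=[(\dbar Ef)|_{\Omega_2}]$ for an extension operator $E$, whose class is independent of the extension chosen, so your ``extend by zero'' description is equivalent), and the corollary is then the elementary observation that the middle term of a short exact sequence vanishes iff the two outer terms do. The hedge about a long exact sequence is unnecessary, since the theorem is genuinely stated as a short exact sequence for each fixed $q$.
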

Here $H^{p,q}_{L^2}(D)$ denotes the well-known $L^2$-Dolbeault cohomology of the domain $D$, the obstruction to solving the $\dbar$-problem with $L^2$-estimates.
The statement $H^{p,q+1}_{c,W^{-1}}(\Omega_2)=0$ means the following: if $f$ is a $(p,q+1)$-current on $\cx^n$ with coefficients in the $L^2$-Sobolev space $W^{-1}(\cx^n)$ and support in $\ol{\Omega_2}$ such that $\dbar f=0$, then there is a $(p,q)$-current $u$, again with coefficients in the $L^2$-Sobolev space $W^{-1}(\cx^n)$ and again with support in $\ol{\Omega_2}$ such that $\dbar u=f$; i.e., the $\dbar$-problem
can be solved with prescribed support on $\ol{\Omega_2}$ and estimates in the  $W^{-1}$-Sobolev norm.

As Corollary~\ref{cor-intro} shows, to understand the $L^2$ solvability of the $\dbar$-problem on non-pseudoconvex domains  we are led to the consideration of
Sobolev cohomologies, including cohomologies with prescribed support. Two important types of such cohomologies in this paper
 are the maximal $W^s$-cohomology $H^{p,q}_{W^s}(D)$ of a domain, measuring the obstruction to solving the $\dbar$-problem
with estimates in the $L^2$-Sobolev space $W^s$, and the minimal $W^s$-cohomology $H^{p,q}_{c, W^s}(D)$, encountered in Corollary~\ref{cor-intro} for $s=-1$, which measures the solvability of the $\dbar$-problem
with prescribed support. These are the Sobolev analogs of the well-known maximal and minimal $L^2$-realizations of the $\dbar$-operator (see \cite{serre}). The precise definitions may be found in
Section~\ref{sec-realizations} below, but at this point we want to emphasize that in view of applications like Corollary~\ref{cor-intro}, we allow Sobolev spaces of both positive and negative orders.

While solvability of the $\dbar$-problem with estimates in a certain norm corresponds to the vanishing of the corresponding cohomology groups, for
non-pseudoconvex domains, the existence of estimates correspond to whether the groups are Hausdorff, or equivalently, whether the $\dbar$-operator has closed range.  For example, the group $H^{p,q}_{W^s}(D)$ is defined to the quotient topological  vector space $Z^{p,q}_{W^s}(D)/B^{p,q}_{W^s}(D)$, where
$Z^{p,q}_{W^s}(D)$ is the space of $\dbar$-closed  $(p,q)$-forms with coefficients in $W^s(D)$, and $B^{p,q}_{W^s}(D)$ is the subspace of $\dbar$-exact forms.
The space $Z^{p,q}_{W^s}(D)$ is an inner product space (as a subspace of $W^s_{p,q}(D)$, the space of $(p,q)$-forms with coefficients in $W^s(D)$). We endow the group
$H^{p,q}_{W^s}(D)$ with the quotient topology, which is Hausdorff if and only if $B^{p,q}_{W^s}(D)$ is closed in $Z^{p,q}_{W^s}(D)$.

Such non-Hausdorff topologies have long been understood as being an important feature of Dolbeault cohomology. In our case, we have the additional feature that
our cohomologies are quotients of \emph{inner product spaces}. As a result, the cohomology groups of this paper have the structure of \emph{semi-inner-product} spaces,
i.e., there is a sesquilinear form
which is non-negative definite, and compatible with the topology (see Section~\ref{sec-sip} below).
 Further, a semi-inner product space $X$, like other not-necessarily-Hausdorff topological vector spaces, has a topological decomposition as a direct sum (see Proposition~\ref{prop-structure}):
 \begin{equation}
 \label{eq-direct} X\cong \Red(X) \oplus \Ind(X),
 \end{equation}
{where we define the \emph{indiscrete part} of $X$ to be the subspace}
\begin{equation}
\label{eq-indiscrete}
{\Ind(X) =\{x\in X: \norm{x}=0\},}
\end{equation}
{which is easily verified to be a closed subspace of $X$ which has the indiscrete topology, i.e., the only nonempty open subset of $\Ind(X)$ is $\Ind(X)$ itself.  We define the \emph{reduced form} of $X$ to be the quotient  space}
\begin{equation}
\label{eq-reduced}
{\Red(X) =X/\Ind(X),}
\end{equation}
{which is a \emph{normed}  (and therefore Hausdorff)  space.}
In an appendix at the end of the paper, we sketch the basic facts
about  not-necessarily-Hausdorff topological vector spaces which arise as cohomology groups.

One of our main results is:

\begin{thm}
	\label{thm-short}
	Let $\Omega=\Omega_1\setminus \ol{\Omega}_2\subset \cx^n$  be a bounded annulus. Let $s$ be an integer, $0\leq q\leq n$, and $0\leq p\leq n$. The following sequence of semi-inner-product spaces
	and continuous {linear} maps is exact:
	\begin{equation}
	\label{eq-short-main}
	{0\to H^{p,q}_{W^{s+1}}(\Omega_1) \xrightarrow{{R}^{p,q}_*}H^{p,q}_{W^{s+1}}(\Omega)
		\xrightarrow{\lambda^{p,q}}H^{p,q+1}_{c,W^s}(\Omega_2)\to 0},
	\end{equation}
	where $R^{p,q}_*$  is the restriction map on cohomology, and $\lambda^{p,q}$ is defined as:
	\begin{equation}
	\label{eq-lambdadef}
	\lambda^{p,q}\left([f]_{H^{p,q}_{W^{s+1}(\Omega)}}\right)= \left[\left.\left(\dbar E f\right)\right\vert_{\Omega_2}\right]_{H^{p,q}_{c, W^s}(\Omega_2)} \quad \text{ for } 	f\in Z^{p,q}_{W^{s+1}(\Omega)},
	\end{equation}
	where $	E: W^{s+1}(\ol{\Omega})\to W^{s+1}(\cx^n)$ is an extension map acting coefficientwise (see \eqref{eq-extension} below).
\end{thm}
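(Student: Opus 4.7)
I would verify exactness position by position, using the extension operator $E$ and cutoff functions. The key observation throughout is that for any $f\in Z^{p,q}_{W^{s+1}}(\Omega)$, the form $\dbar Ef\in W^s_{p,q+1}(\cx^n)$ vanishes on $\Omega$ (since $Ef|_\Omega=f$ and $\dbar f=0$), so $(\dbar Ef)|_{\Omega_2}$ extends by zero across $\partial\Omega_2$ to a $\dbar$-closed $W^s$-current on $\cx^n$ supported in $\overline{\Omega_2}$, representing a class in $H^{p,q+1}_{c,W^s}(\Omega_2)$.

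To show $\lambda^{p,q}$ is well-defined on cohomology, fix $\chi\in\mathcal{C}^\infty_c(\Omega_1)$ equal to $1$ on a neighborhood of $\overline{\Omega_2}$. If $f=\dbar u$ with $u\in W^{s+1}_{p,q-1}(\Omega)$, the form $w:=\chi(Ef-\dbar Eu)$ lies in $W^s$, is supported in $\overline{\Omega_2}$ (because $Ef-\dbar Eu$ vanishes on $\Omega$ while $\chi$ is supported in $\Omega_1$), and the identity $\dbar w=\dbar\chi\wedge(Ef-\dbar Eu)+\chi\,\dbar Ef$ collapses to $(\dbar Ef)|_{\Omega_2}$ since $\dbar\chi$ is supported exactly where $Ef-\dbar Eu$ vanishes. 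The same cutoff trick, with $\dbar Eu$ replaced by $g$, shows $\lambda^{p,q}\circ R^{p,q}_*=0$ when $f=g|_\Omega$ with $g\in Z^{p,q}_{W^{s+1}}(\Omega_1)$. Continuity of the two maps follows from boundedness of $E$, $\dbar$, and restriction.

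For exactness at the middle term, given $[f]$ with $\lambda^{p,q}[f]=0$, I would pick a primitive $v$ supported in $\overline{\Omega_2}$ of $(\dbar Ef)|_{\Omega_2}$ and form $\tilde g:=(Ef-v)|_{\Omega_1}$: then $\tilde g|_\Omega=f$ (since $v=0$ there) and $\dbar\tilde g=0$ on $\Omega_1$ by construction. For surjectivity of $\lambda^{p,q}$, given $h\in Z^{p,q+1}_{c,W^s}(\Omega_2)$, convolution of $h$ with the Bochner--Martinelli kernel yields $U\in W^s_{p,q}(\cx^n)$ with $\dbar U=h$; smoothness of $U$ off $\overline{\Omega_2}$ implies $f:=U|_\Omega$ is $W^{s+1}$ and $\dbar$-closed, and comparing $U$ and $Ef$ via a further cutoff identifies $\lambda^{p,q}[f]$ with $[h]$.

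The hard part will be the injectivity of $R^{p,q}_*$, together with the careful tracking of Sobolev regularity implicit in the preceding steps. If $g|_\Omega=\dbar u$ for some $u\in W^{s+1}_{p,q-1}(\Omega)$, the natural candidate $Eu|_{\Omega_1}$ satisfies $\dbar(Eu|_{\Omega_1})=g$ only on $\Omega$, and the discrepancy $\psi:=g-\dbar Eu$ is a $\dbar$-closed $W^s$-form supported in $\overline{\Omega_2}$. Producing a $W^{s+1}$-primitive for $\psi$ with support in $\overline{\Omega_2}$ apparently asks for an extra Sobolev derivative that $\dbar$ cannot supply on its own. I expect this to be resolved by exploiting the two-sided $W^{s+1}$-regularity of $g$ across $\partial\Omega_2$ together with a global Bochner--Martinelli solve for $\psi$ whose output is smooth off $\overline{\Omega_2}$, and by using the reduced/indiscrete decomposition of the semi-inner product cohomology groups to absorb any unavoidable residual loss into the indiscrete part.
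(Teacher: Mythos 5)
Your outline takes a genuinely different route from the paper: the paper never verifies exactness of \eqref{eq-short-main} position by position, but instead builds the short exact sequence of cochain complexes $0\to A^{p,*}_{c,W^s}(\Omega_2)\to A^{p,*}_{\mathrm{ext},W^s}(\Omega_1)\to A^{p,*}_{\mathrm{ext},W^s}(\Omega)\to 0$, passes to the long exact cohomology sequence, shows the map induced by zero-extension vanishes, and then converts the extendable cohomologies into $W^{s+1}$-cohomologies via the isomorphism $i_*^{p,q}$ of Theorem~\ref{thm-istar}. Your direct computations for the well-definedness of $\lambda^{p,q}$, for $\lambda^{p,q}\circ R^{p,q}_*=0$, and (essentially) for surjectivity are sound, and in fact reproduce in hands-on form what the connecting homomorphism does in the paper.

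However, there is a genuine gap exactly where the index shift from $s$ to $s+1$ has to be earned, and it occurs in \emph{both} remaining steps, not only in the injectivity of $R^{p,q}_*$. In the middle-term exactness, your primitive $v$ lies in $A^{p,q}_{c,W^s}(\Omega_2)$ and so has only $W^s$ coefficients; hence $\tilde g=(Ef-v)|_{\Omega_1}$ lies in $W^s_{p,q}(\ol{\Omega_1})$, one derivative short of $Z^{p,q}_{W^{s+1}}(\Omega_1)$, and as written it does not represent a class in $H^{p,q}_{W^{s+1}}(\Omega_1)$. The identical loss appears in the injectivity step, where $\dbar Eu$ is only $W^s$. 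The missing ingredient in both places is the one-derivative gain coming from interior elliptic regularity of a solution operator applied to $\dbar$-closed data supported in the compact set $\ol{\Omega_2}$: one solves $\dbar w=\psi$ by $\dbar^*N\psi$ on a bounded pseudoconvex neighborhood (or by Bochner--Martinelli), obtains $w\in W^{s+1}_{\mathrm{loc}}$, and replaces $Eu$ by $Eu+w$, which then lies in $A^{p,q-1}_{W^{s+1}}(\Omega_1)$. This is precisely what the paper isolates as Theorem~\ref{thm-istar}. Your proposed fixes do not supply it: \emph{smoothness of the Bochner--Martinelli solve off $\ol{\Omega_2}$} is beside the point, because $\ol{\Omega_2}$ sits inside $\Omega_1$ and you need $W^{s+1}$ regularity of the correction term \emph{across} $\ol{\Omega_2}$, not merely away from it (the same issue already makes your claim that $U|_\Omega\in W^{s+1}(\ol{\Omega})$ in the surjectivity step depend on the $W^s\to W^{s+1}_{\mathrm{loc}}$ mapping property of the kernel, not on smoothness off $\ol{\Omega_2}$, since $b\Omega\supset b\Omega_2$). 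And \emph{absorbing the residual loss into the indiscrete part} cannot work: exactness of \eqref{eq-short-main} is a statement about the actual kernels and images of the maps, indiscrete summands included, not about the reduced cohomology. Once the interior-regularity gain is inserted, your cutoff computations go through and the argument becomes essentially equivalent to the paper's; without it, the middle-exactness and injectivity steps fail.
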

A key ingredient  in the proof of Theorem~\ref{thm-short} (and Theorem~\ref{thm-long1} below) is the  construction of a Sobolev-Dolbeault analog of the
relative cohomology of a topological space with respect to a subspace. We call this analog the
\emph{extendable cohomology} (see Section~\ref{sec-three-realizations} below). The analogous construction for the de Rham cohomology is well-known
(see \cite[Chapitre XII]{godbillon}). In many ways, the extendable cohomology, while easy to define, is pathological: for example, the $\dbar$-operator defining it is
not a closed operator, unlike the standard maximal or minimal realizations. What still allows us to prove results like the above is the the fact
that the extendable cohomology of Sobolev order $s$ is isomorphic to the usual Sobolev cohomology of order $s+1$ (Theorem~\ref{thm-istar} below). The key ingredient here is interior regularity of the $\dbar$-problem.

A second result of the same type as Theorem~\ref{thm-short}
 can be obtained by applying the same ideas to  a different short exact sequence \eqref{eq-sesq2}, but this time we obtain a single long exact sequence \eqref{eq-long-hole}  rather than a separate
short exact sequence for each $q$ . We state and prove this result in a general form in Section~\ref{sec-long1} below, using
the notion of a mixed realization, but note here the following consequence:
	\begin{cor}[to Theorem~\ref{thm-long1} below; see Section~\ref{sec-longcor}]	\label{cor-long-c}
		Let $\Omega=\Omega_1\setminus \ol{\Omega}_2\subset \cx^n$  be a bounded annulus. Let $s$ be an integer, and let $0\leq p\leq n$. The following sequence of semi-inner-product spaces
		and continuous {linear} maps is exact:
	\begin{equation}\label{eq-long-hole-min} \cdots \xrightarrow{{S}^{p,q-1}} H^{p,q-1}_{W^{s+1}}(\Omega_2)
	\xrightarrow{\ell^{p,q-1}}H^{p,q}_{c,W^s}(\Omega) \xrightarrow{(\eh_*)^{p,q}}H^{p,q}_{c,W^s}(\Omega_1) \xrightarrow{{S}^{p,q}}H^{p,q}_{W^{s+1}}(\Omega_2)
	\xrightarrow{\ell^{p,q}}\cdots\end{equation}
	where the maps $S^{p,q}, \ell^{p,q}$ and $(\eh_*)^{p,q}$ are defined in Section~\ref{sec-holerest} below.
\end{cor}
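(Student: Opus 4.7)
The plan is to derive Corollary~\ref{cor-long-c} from Theorem~\ref{thm-long1} by specializing the general mixed-realization construction. The underlying short exact sequence of chain complexes, which is \eqref{eq-sesq2}, should take the form
\[
0 \to C^{p,\bullet}_{c,W^s}(\Omega) \to C^{p,\bullet}_{c,W^s}(\Omega_1) \to C^{p,\bullet}_{\mathrm{ext},W^s}(\Omega_2) \to 0,
\]
where the three complexes are, from left to right, the minimal $W^s$-complex on $\Omega$, the minimal $W^s$-complex on $\Omega_1$, and the extendable $W^s$-complex on $\Omega_2$ from Section~\ref{sec-three-realizations}. The first arrow is extension by zero from $\overline{\Omega}$ across $\overline{\Omega_1}$, and the second is restriction of a current supported in $\overline{\Omega_1}$ to the interior subdomain $\Omega_2$. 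Injectivity on the left and exactness in the middle are essentially tautological from the support condition (a $W^s$-current supported in $\overline{\Omega_1}$ whose restriction to $\Omega_2$ vanishes is supported in $\overline{\Omega}$), while surjectivity of the restriction on the right is built into the definition of the extendable complex.

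Applying the snake lemma in the category of semi-inner-product spaces then produces a long exact sequence with continuous maps, but with the extendable cohomology $H^{p,q}_{\mathrm{ext},W^s}(\Omega_2)$ in place of $H^{p,q}_{W^{s+1}}(\Omega_2)$. The key step is to invoke Theorem~\ref{thm-istar}, which via interior $\dbar$-regularity supplies a canonical isomorphism $H^{p,q}_{\mathrm{ext},W^s}(\Omega_2) \cong H^{p,q}_{W^{s+1}}(\Omega_2)$, reflecting the one-order Sobolev gain. Substituting this isomorphism throughout converts the sequence into \eqref{eq-long-hole-min}. The induced maps are then tracked as follows: $(\eh_*)^{p,q}$ is the map on cohomology induced by zero-extension, $S^{p,q}$ is induced by restriction to $\Omega_2$ followed by the identification of Theorem~\ref{thm-istar}, and the connecting map $\ell^{p,q}$ admits the standard zig-zag description: given a class in $H^{p,q}_{W^{s+1}}(\Omega_2)$, pick an extendable $W^s$-representative $f$, extend it to a compactly supported $W^s$-form $\tilde f$ on $\Omega_1$ via \eqref{eq-extension} and a cut-off, and return $[\dbar \tilde f] \in H^{p,q+1}_{c,W^s}(\Omega)$; the latter is well-defined because $\dbar \tilde f$ is supported in $\overline{\Omega}$.

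The principal obstacle is the identification $H^{p,q}_{\mathrm{ext},W^s}(\Omega_2) \cong H^{p,q}_{W^{s+1}}(\Omega_2)$ furnished by Theorem~\ref{thm-istar}. Unlike the maximal or minimal realizations, the extendable $\dbar$-operator is not closed, as emphasized in the introduction, so its cohomology is a delicate object that must be related to the standard Sobolev cohomology by an interior elliptic-regularity argument; this is the technical heart of the paper's approach. A secondary concern is ensuring that the snake-lemma construction goes through in the category of semi-inner-product spaces: one has to verify that the connecting homomorphism descends to a \emph{continuous} map on the quotient spaces, which reduces to boundedness of the extension operator $E$ from \eqref{eq-extension} together with boundedness of $\dbar$ between the relevant Sobolev spaces, both of which are standard but must be threaded through each term in the sequence.
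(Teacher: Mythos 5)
Your proposal is correct and follows essentially the same route as the paper: the paper's proof of Corollary~\ref{cor-long-c} is the one-line observation that taking $\db$ in Theorem~\ref{thm-long1} to be the minimal realization on $\Omega_1$ makes the mixed realization $A^{p,q}_{(\db,c),W^s}(\Omega)$ coincide with the minimal realization $A^{p,q}_{c,W^s}(\Omega)$, which is exactly the identification your support argument for exactness in the middle of the specialized sequence \eqref{eq-sesq2} establishes. The rest of your write-up (snake lemma, Theorem~\ref{thm-istar} to trade $H^{p,q}_{\mathrm{ext},W^s}(\Omega_2)$ for $H^{p,q}_{W^{s+1}}(\Omega_2)$, and the descriptions of $S^{p,q}$, $\ell^{p,q}$, $(\eh_*)^{p,q}$) simply unrolls the already-proved Theorem~\ref{thm-long1} rather than citing it, but the content is identical.
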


Mayer-Vietoris type arguments as  above are, of course, classical in complex analysis (see, e.g.,  \cite{andreo72a, andreo72b, andreo76}).

Section~\ref{sec-applications} is devoted to applications of the exact sequences established in Theorems~\ref{thm-short} and \ref{thm-long1} to questions about function theory
on annuli. As preliminaries, we discuss the Sobolev analog of Serre duality as well as recall the vanishing results for Sobolev cohomologies of pseudoconvex domains which will be used.
Instead of trying to state the most general results in this vein, we  consider examples illuminating the possibilities and limitations of our method. We prove Sobolev versions of several
results of \cite{shawclosed, meichiharmonic, lishaw, meichichristine, ChShTh, FLS} on the way.

While in Section~\ref{sec-applications}, we used vanishing results for cohomology on pseudoconvex domains in conjunction with our exact sequences to obtain information about annuli,
 in Section~\ref{sec-prescribed}, we change the point of view and apply known results about annuli, along with our exact sequences, to deduce new results about the cohomology of pseudoconvex domains. In particular, we obtain vanishing results for the minimal $W^s$-cohomologies of pseudoconvex domains and annuli (Propositions~\ref{prop-prescribed1}  and \ref{prop-prescribed2} below), i.e., we are able to solve the $\dbar$-problem in $\cx^n$, with estimates in a Sobolev space $W^s$,  with prescribed support in a pseudoconvex domain or in an annulus. By dualizing, we also
 are able to obtain, for the first time, solutions of the $\dbar$-problem with estimates in Sobolev spaces of negative order (see Corollaries~\ref{cor-negative1} and \ref{cor-negative2} below).
  It is well-known
 that  solvability with prescribed support is closely related to the extension of $\dbar_b$-closed forms on the boundary of a domain, and in  particular to the Hartogs-Bochner phenomenon for
 degree $(p,1)$ (see \cite[Section~9.1]{chenshaw,ShTh} and Proposition~\ref{prop-connected-complement} below). When we are in the $L^2$-setting, such a solution with $L^2$-estimates can be constructed starting from the $L^2$-canonical solution operator. However, we show that this $L^2$-solution in general does not admit Sobolev estimates for $s\leq -2$ (see Proposition~\ref{prop-notcts} below).

We have therefore  obtained in Proposition~\ref{prop-prescribed1}  a solution to the $\dbar$-problem which shows better behavior in
some Sobolev spaces  than the canonical solution. Perhaps one should not be surprised by this, since the canonical solution is the solution of the
$\dbar$-problem with smallest $L^2$-norm, so if it exists it is only guaranteed to be regular in the $L^2$-sense. The fact that it sometimes satisfies Sobolev estimates is a consequence of Sobolev estimates on the $\dbar$-Neumann operator, which may or may or may not hold. It is well-known that by using a strictly plurisubharmonic weight smooth up to the boundary, one can obtain Sobolev estimates
for the $\dbar$-problem for the weighted canonical solution (see \cite{kohn73}).  Even this technique does not work in situations like the polydisc (see \cite{ehsani1}). However, in \cite{ChShTh}, a $W^1$-Sobolev estimate was obtained for the $\dbar$-problem on the polydisc, using an argument in many ways similar to that in Proposition~\ref{prop-prescribed1}.
This raises the question of the relation of the methods of this paper with an old and difficult question, that of developing a $W^s$-theory of the  $\dbar$-problem, in analogy
to the classical $L^2$-theory. The hypothetical basic estimate of such a theory would bound $\norm{\dbar u}_{W^s}^2 + \norm{\dbar^*_{W^s}u}_{W^s}^2$ from below, where
$u$ is a form on a pseudoconvex domain $D$, which lies in $A^{p,q}_{W^s}(D)\cap \dom(\dbar^*_{W^s})$, where $A^{p,q}_{W^s}(D)$ is the domain of the maximal realization of
$\dbar$ as defined in Section~\ref{sec-realizations}	below,  and $\dbar^*_{W^s}$ is the adjoint of the maximal realization with respect in the $W^s$-space. Notice that on smoothly bounded
pseudoconvex domains, the regularity of the weighted canonical solution in Sobolev spaces already
implies that such a Sobolev basic estimate must hold. But, based on the Sobolev estimates on polydiscs and on solutions with prescribed support,  we can perhaps suspect that such an estimate holds
 on much wider classes of pseudoconvex domains.
While the question of $W^s$-estimates is very natural, at present very little is known regarding this problem in general domains (see, however,  \cite{boas1984, boas1985, krantz1,krantz2}).

\section{Sobolev Realizations of the \texorpdfstring{$\dbar$}{dbar}-operator}\label{sec-realizations}
\subsection{Sobolev spaces}

In this paper, we  consider the $\dbar$-operator acting  distributionally on certain Sobolev spaces of currents  on a domain $D\subset \cx^n$.
We call such an operator  a \emph{realization} of the $\dbar$-operator.
We use Sobolev spaces on domains which are not necessarily smooth or even Lipschitz, and also Sobolev spaces of negative index.
Extensive information on Sobolev spaces may be found in texts such as \cite{lionsmagenes} and, for Lipschitz domains, \cite{grisvard}. Here we
recall some definitions and facts, and set up notation.

 For real $s$, the Sobolev space $W^s(\rl^d)$ consists of tempered distributions whose Fourier transforms
 satisfy  the condition
 \[W^s(\rl^d)= \left\{ f\in \mathscr{S}'(\rl^d):  \wh{f}\in L^1_{\mathrm{loc}}(\rl^d) \text{ and }\norm{f}_{W^s(\rl^d)}^2:=\int_{\rl^d} \abs{ \wh{f}(\xi)}^2 (1+ \abs{\xi}^2)^s dV(\xi) <\infty \right\}. \]

% where we use a normalization of the Fourier transform in which for integrable $f$ we have
% $\wh{f}(\xi)= \int_{\rl^d}f(x)e^{-2\pi i \langle	x,\xi \rangle } dV(x).$
    It is well-known that
    \begin{itemize}
    	\item $W^s(\rl^d)$  is a Hilbert space.
    	\item  if $s$ is a non-negative integer  then  $W^s(\rl^d)$ can
    	also be thought of as the space of functions all of whose partial derivatives of order up to $s$ are in $L^2(\rl^d)$.
    	\item The dual of $W^s(\rl^d)$ can be conjugate-linearly and isometrically identified with $W^{-s}(\rl^d)$ by the pairing $f, g \mapsto \int_{\rl^d} \widehat{f} \ol{\wh{g}}dV$.
    	\end{itemize}
    Here, we will usually only consider the case when $s$ is an integer.  {This is to avoid certain pathologies that arise when $s-\frac{1}{2}$ is an integer.  In particular, we need to know that the closure of the space of compactly supported functions with respect to the Sobolev norm is equivalent to the space of functions which remain in the Sobolev space when extended by zero; see Corollary 1.4.4.5 in \cite{grisvard}.} We note, however, that many of our results (including the main exact sequences \eqref{eq-short-main} and \eqref{eq-long-hole}) do continue to hold for arbitrary real $s$.

 For a   domain $D\subset \rl^d$, we let $W^s(\overline{D})$ mean the space of restrictions of distributions in $W^s(\rl^d)$ to $D$, which is a Hilbert
 space with the norm
\[\norm{f}_{W^s(\overline{D})}= \inf_{\substack{F\in W^s(\rl^d)\\ F|_D=f}}\norm{F}_{W^s(\rl^d)}. \]
This is one of the standard definitions of Sobolev spaces on domains if $s\geq 0$, when it is standard to denote it by $W^s(D)$ (see, e.g., \cite{chenshaw}).  {We have adopted the notation of \cite{grisvard} (see Theorem 1.4.3.1 in \cite{grisvard} for proof that this definition is equivalent to other standard definitions when $s>0$ and $D$ is bounded with Lipschitz boundary).}  We emphasize here that we use the definition above also for negative $s$.  {When $s<0$, it is more conventional to define $W^s(D)$ to be the dual to $W^{-s}_0(D)$.  However, this space does not necessarily admit a bounded linear extension operator $E:W^{s}(D)\rightarrow W^s(\mathbb{R}^n)$, which will be critical to our constructions.  Hence, we work with the space $W^s(\overline{D})$, which admits an extension operator by definition.}

We can isometrically identify $W^s(\overline{D})$ with the quotient Hilbert space $W^s(\rl^d)/Z^s_D$, where $Z^s_D\subset W^s(\rl^d)$ is the closed
subspace given by
\[Z^s_D= \{f\in W^s(\rl^d): f|_D=0\}.\]
Thanks to the isomorphism $W^s(\overline{D})=(Z^s_D)^\perp$, which identifies $W^s(\overline{D})$ as a closed subspace of $W^s(\rl^d)$, we obtain a bounded linear extension operator
\begin{equation}
\label{eq-extension}
E: W^s(\overline{D})\to W^s(\rl^d),
\end{equation}
which associates with an $f\in  W^s(\overline{D})$ an element $F\in W^s(\rl^d)$ such that $F|_D=f$ and $\norm{F}_{W^s(\rl^d)}= \norm{f}_{W^s(\overline{D})}$.  {In particular, $Ef$ will be the unique extension of $f$ of minimal norm in $W^s(\rl^d)$.}

The isomorphism $W^s(\overline{D})\cong W^s(\rl^d)/Z^s_D$ gives the description of the dual
\begin{align*}
	W^s(\overline{D})'&= \{ \phi\in W^s(\rl^d)': \phi|_{Z^s_D}=0\}\\
	&=\{ \phi\in W^s(\rl^d)': \text{for $f\in W^s(\rl^d)$,  if } f|_D=0 \text{ then } \phi(f)=0 \}.
\end{align*}
Notice the smooth compactly supported functions are in $W^s(\rl^d)$.
Identifying $W^s(\rl^d)'$ with $W^{-s}(\rl^d)$ via the $L^2$-pairing, we have an isomorphism
\[	W^s(\overline{D})'\cong \{\phi\in W^{-s}(\rl^d): \mathrm{support}(\phi) \subset \overline{D}\}. \]
%Consequently, we can identify $W^s(\overline{D})'$ with the space
%\[ \{\phi \in W^{-s}(\ol{D}): \text{there is a } \Phi\in W^{-s}(\rl^d)  \text{ such that } \Phi|_D=\phi \text{ and } \Phi|_{\rl^d\setminus \ol{D}}=0   \}. \]

We let ${W}^t_0(\ol{D})$ be the subspace of $W^t(\overline{D})$ consisting of the images in $W^t(\overline{D})$  of  those $f\in W^t(\rl^d)$
  which are supported in $\overline{D}$. If we think of ${W}^t_0(\ol{D})$  as a space of ``distributions on $\overline{D}$," an element $f\in W^t_0(\overline{D})$   has a \emph{zero extension}, i.e.,
a distribution $\zs f\in W^t(\rl^d)$ which vanishes on $\rl^d \setminus \ol{D}$ and coincides with  $f$  on $\ol{D}$. Then we have the isomorphism
\[W^s(\overline{D})' = W^{-s}_0(\overline{D}),  \]
where the pairing of the two spaces is given by
\[ f, g \mapsto  \int_{\rl^d}\widehat{E f} \cdot \ol{\widehat{\zs g}}\,dV.\]

Recall other standard definitions of Sobolev spaces on domains.
For a positive integer $s$, let $W^s(D)$  be the space of functions on $D$  all whose partial derivatives of order up to $s$ are in $L^2(D)$. This is a Hilbert space  with the well-known standard norm.
 Let $W^s_0(D)$ be the closure in $W^{s}(D)$ of the  subspace $\mathcal{D}(D)$ of
 compactly supported forms. For $s<0$, let $W^s(D)$ be the dual of $W^{-s}_0(D)$.
 There is an injective continuous map $W^s(\ol{D})\to W^s(D)$ given by $f\mapsto Ef|_D$, where $E$ is as in \eqref{eq-extension}. Via this map, we will consider
 $W^s(\overline{D})$ as a subspace of $W^s(D)$. We note the following standard facts (see \cite{grisvard}):

 \begin{prop}
 	\label{prop-sobolevprops}

  The following hold:
\begin{enumerate}
	\item $W^s(\overline{D}) \subset W^s(D)\subset \mathcal{D}'(D)$, with continuous inclusions.
	\item $\mathcal{C}^\infty(\ol{D})$ is dense in $W^s(\overline{D})$ and $\mathcal{D}({D})$ is dense in $W^s_0(\overline{D})$
	\item if $s\geq 0$ and $D$ is Lipschitz then $W^s(\overline{D}) =W^s(D)$, with equivalent norms.
%	\begin{itemize}
%		\item 	
%	%	\item $\mathcal{C}^\infty(\ol{D})$ is dense in the space
%	\end{itemize}

	\item If  $s\leq\frac{1}{2}$  {and $D$ is Lipschitz} then   $W^s(\overline{D})={W}^s_0(\ol{D})$ so for $s\leq \frac{1}{2}$  the extension operator $E$ of \eqref{eq-extension} coincides with the zero extension operator.
\end{enumerate}
  \end{prop}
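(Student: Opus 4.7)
The plan is to reduce each of the four assertions to standard facts about Sobolev spaces on $\rl^d$ by combining the isomorphism $W^s(\ol D)\cong W^s(\rl^d)/Z^s_D$ with the norm-preserving extension operator $E$ from \eqref{eq-extension}. Items (1) and (2) are essentially formal once this is set up; items (3) and (4) require the Lipschitz hypothesis on $D$ and are invoked directly from \cite{grisvard}, specifically a Stein-type extension theorem and Corollary~1.4.4.5.

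For (1), the map $f\mapsto (Ef)|_D$ realizes $W^s(\ol D)$ inside $W^s(D)$, and its continuity follows from $\norm{Ef}_{W^s(\rl^d)}=\norm{f}_{W^s(\ol D)}$ together with continuity of distributional restriction. The second inclusion $W^s(D)\hookrightarrow\mathcal D'(D)$ is immediate for $s\geq 0$ via $W^s(D)\subset L^2(D)$, and for $s<0$ follows from the duality $W^s(D)=(W^{-s}_0(D))'$ paired with the dense continuous inclusion $\mathcal D(D)\hookrightarrow W^{-s}_0(D)$. For (2), density of $\mathcal C^\infty(\ol D)$ in $W^s(\ol D)$ comes by restricting Schwartz functions, using that $\mathscr S(\rl^d)$ is dense in $W^s(\rl^d)$ and that the quotient map $W^s(\rl^d)\to W^s(\ol D)$ is surjective and open. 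Density of $\mathcal D(D)$ in $W^s_0(\ol D)$ is obtained by standard mollification of a representative distribution supported in $\ol D$ followed by cutoff with a function compactly supported in $D$; the passage through the quotient $W^s(\ol D)$ absorbs the behaviour outside $\ol D$ produced by mollification.

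For (3), when $s\geq 0$ and $D$ is Lipschitz, the Stein extension theorem supplies a bounded linear extension $W^s(D)\to W^s(\rl^d)$; composed with the restriction to $D$ this shows $W^s(D)\subset W^s(\ol D)$, and combined with (1) the open mapping theorem yields the equivalence of norms. The main obstacle is (4): one must show that for $s\leq\tfrac12$ every element of $W^s(\ol D)$ is the restriction of some distribution in $W^s(\rl^d)$ supported in $\ol D$, equivalently that the zero extension of an element of $W^s(D)$ lies in $W^s(\rl^d)$. This is the delicate endpoint of Sobolev theory, reflecting the fact that for $s\leq\tfrac12$ the boundary $\partial D$ is negligible for Sobolev $s$-capacity, so that the characteristic function of $D$ is a bounded multiplier on $W^s(\rl^d)$. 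I will invoke Corollary~1.4.4.5 of \cite{grisvard} as a black box for this statement; once $W^s(\ol D)=W^s_0(\ol D)$ is established, uniqueness of the minimal-norm extension forces $E$ to coincide with extension by zero.
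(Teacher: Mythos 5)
The paper does not actually prove Proposition~\ref{prop-sobolevprops}: it is introduced with ``we note the following standard facts'' and a pointer to \cite{grisvard}, so there is no argument of record to compare against step by step. Your overall strategy --- reduce everything to $W^s(\rl^d)$ via the isometry $W^s(\ol{D})\cong W^s(\rl^d)/Z^s_D$ and the minimal-norm extension $E$ of \eqref{eq-extension}, and outsource the two genuinely hard points (Stein extension for (3), the $s\le\frac12$ endpoint for (4)) to \cite{grisvard} --- is exactly the spirit in which the authors intend the result, and your treatments of (1), the first half of (2), and (3) are sound.

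Two steps have real gaps, though. For the density of $\mathcal{D}(D)$ in $W^s_0(\ol{D})$ you propose to mollify a representative supported in $\ol{D}$ and \emph{then} multiply by a cutoff $\chi_\epsilon$ compactly supported in $D$. The cutoff step is not harmless: writing $\chi_\epsilon f_\epsilon-f=\chi_\epsilon(f_\epsilon-f)+(\chi_\epsilon-1)f$, the second term is $f$ localized to a shrinking collar of the boundary, and showing that it tends to $0$ in $W^s$ is a Hardy-inequality--type assertion essentially equivalent to the density you are trying to prove; moreover multiplication by $\chi_\epsilon$ has $W^s$ operator norm blowing up as the transition layer shrinks, so the quotient does not ``absorb'' anything here. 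The standard argument --- the one the paper itself uses in the proof of part (4) of Proposition~\ref{prop-maxrealproperties}, following \cite[Lemma~4.3.2]{chenshaw} and Theorem~1.4.2.2 of \cite{grisvard} --- reverses the order: after a partition of unity reducing to star-shaped pieces (this is where boundary regularity enters, which your sketch does not flag), one first dilates toward the interior so that the support becomes compact in $D$, and only then mollifies. Second, in (4) the inference ``$W^s(\ol{D})=W^s_0(\ol{D})$, hence $E$ coincides with extension by zero'' is a non sequitur: knowing that the class of $f$ admits \emph{some} representative supported in $\ol{D}$ does not show that the \emph{minimal-norm} representative $Ef\in(Z^s_D)^\perp$ is so supported. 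One still needs to identify $(Z^s_D)^\perp$ with a space of distributions supported in $\ol{D}$ (for instance by noting that $(1-\Delta)^sEf$ annihilates $Z^s_D$ and that for $\abs{s}\le\frac12$ no nonzero element of $W^{-s}(\rl^d)$ is carried by the Lipschitz hypersurface $bD$); without some such step the claimed identification of $E$ with the zero extension operator is unjustified.
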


\subsection{The three standard realizations}\label{sec-three-realizations}
%Let $s$ be an integer, and  let $D$ be a \cancel{Lipschitz} domain in $\cx^n$.
%We denote by $W^s(D)$ the $L^2$-Sobolev space of order $s$ on $D$. There are
%many possible definitions of this space, but on Lipschitz domains they all coincide (see \cite{lionsmagenes}).

Let $D$ be a bounded domain in $\cx^n$. We let $W^s_{p,q}(\ol{D})$ and $(W^s_0)_{p,q}(\ol{D})$ denote the Hilbert spaces of currents with coefficients
in $W^s(\ol{D})$ and $W^s_0(\ol{D})$ respectively.  We introduce three standard realizations of the $\dbar$-operator on Sobolev spaces
on $D$ which will play a central role in what follows. Each realization is specified by defining its domain $A^{p,q}_{\db, W^s}(D)$, which is
a subspace of the space $W^s_{p,q}(\ol{D})$ of currents whose coefficients belong to the Sobolev space $W^s(\ol{D})$.

\begin{enumerate}[wide]%, labelwidth=!, labelindent=5pt]
	\item  For $0\leq p, q \leq n$, let $A^{p,q}_{W^s}(D)$ consist of those $u\in  W^s_{p,{q}}(\ol{D})$ such that $\dbar u\in  W^s_{p,{q+1}}(\ol{D})$, where $\dbar u$ is computed in the sense of distributions.
The \emph{$W^s$-graph norm}    of  $f\in A^{p,q}_{W^s}(D)$ is defined by
\begin{equation}
\label{eq-graphnorm}
\norm{f}_{A^{p,q}_{W^s}(D)}^2 =\norm{f}^2_{W^s(\ol{D})} + \norm{\dbar f}^2_{W^s(\ol{D})},
\end{equation}
which makes $A^{p,q}_{W^s}(D)$ into a normed linear space, and an inner product space with the obvious inner product that induces this norm.
We say that $A^{p,q}_{W^s}(D)$ is the \emph{domain of the (maximal)  $W^s$-realization} of the $\dbar$-operator, or simply the Sobolev realization if the order $s$ is understood.
%  We can think of the $W^s$-realization of the $\dbar$-operator also as a closed, densely defined operator on the Hilbert space $W^s_{p,q}(\ol{D})$

%{We denote by $A^{p,q}_{c, W^s}(D)$ the closure of {$\mathcal{D}^{p,q}(D)$, the space of} smooth compactly supported forms of degree $(p,q)$, in $A^{p,q}_{W^s}(D)$ with
%respect to the $W^s$-graph norm. This
%is the domain of the \emph{minimal Sobolev realization}, and the $\dbar$-operator acting on $A^{p,q}_{c, W^s}(D)$ distributionally is the
%\emph{minimal realization} of the $\dbar$-operator.}

\item  We denote

 \[ A^{p,q}_{c, W^s}(D)=  A^{p,q}_{W^s}(D)\cap (W^s_0)_{p,q}(\ol{D})
 .
 \]
In other words, $A^{p,q}_{c, W^s}(D)$ consists of those $f\in A^{p,q}_{W^s}(D)$
for which there is an $F\in A^{p,q}_{W^s}(\cx^n) $  such that $F|_D=f$, and $F\equiv 0$ on $\cx^n\setminus \ol{D}$.
%Notice that then $\dbar F \in (W^s_{0})_{p,q}(\ol{D})$, the space
%$A^{p,q}_{c, W^s}(D)$ is  a closed subspace of $A^{p,q}_{W^s}(D)$ and therefore
%a Hilbert space, and this defines $\dbar$ as  a closed, densely defined   operator on the Hilbert space $W^s_{p,q}(\ol{D})$.  We will refer to this as the
%\emph{minimal (Sobolev) realization} of $\dbar$.

Notice that by definition, there is  bounded linear \emph{zero extension operator}
\begin{equation}
\label{eq-zeroext} \zs: A^{p,q}_{c, W^s}(D)\to A^{p,q}_{W^s}(\cx^n),
\end{equation}
which maps a current $f$ to a current $\zs f$ such that $\zs f|_D=f$ and $\zs f|_{\cx \setminus \ol{D}}=0$.

\item We define the \emph{extendable (Sobolev) realization}
$A^{p,q}_{\mathrm{ext}, W^s}(D)$ to consist of those currents $f\in A^{p,q}_{W^s}(D)$ for which there exists an $F\in A^{p,q}_{ W^s}(\cx^n)$
such that $F|_D=f$.  We endow $A^{p,q}_{\mathrm{ext}, W^s}(D)$ with the graph norm \eqref{eq-graphnorm}, which makes it into an inner-product space which is not necessarily complete. Notice that we have the inclusions of subspaces
\[A^{p,q}_{c, W^s}(D) \subset A^{p,q}_{\mathrm{ext}, W^s}(D)\subset A^{p,q}_{W^s}(D),\]
where (depending on $s$ and $D$) the inclusions can be strict. Notice that $ A^{p,q}_{\mathrm{ext}, W^s}(D)\neq A^{p,q}_{W^s}(D)$ means precisely
that there is a current $f\in A^{p,q}_{W^s}(D)$ such that $f$ extends to an element $F$ of $A^{p,q}_{W^s}(\cx^n)$ and
$\dbar f$ extends to an element $G$ of $A^{p,q+1}_{W^s}(D)$, but we cannot have $\dbar F=G$ for such extensions.

\end{enumerate}

\subsection{ The associated cochain complex}
\label{sec-cochains}
Let $0\leq p \leq n$, and let   $A^{p,q}_{\db,W^s}(D)$  stand for any one of $A^{p,q}_{W^s}(D)$, $A^{p,q}_{c,W^s}(D)$ or $A^{p,q}_{\mathrm{ext},W^s}(D)$ (the same one for each $q$).
Each of the  three $W^s$-realizations of the $\dbar$-operator on a domain $D\subset \cx^n$  defined in Section~\ref{sec-three-realizations} defines   a cochain complex in the sense of homological algebra (see \cite[Chapter XX, $\S$1 ]{lang}) :
\begin{equation}
\label{eq-realization}   A^{p,0}_{\db,W^s}(D)\xrightarrow{\dbar} A^{p,1}_{\db,W^s}(D)\xrightarrow{\dbar}\cdots \xrightarrow{\dbar} A^{p,n}_{\db,W^s}(D),
\end{equation}
where in each case $\dbar$ acts in the sense of distributions.
Endowed with the inner product
corresponding to the graph norm \eqref{eq-graphnorm}, this is a cochain complex of inner-product spaces.  Notice that the differential $\dbar$ is continuous in each degree from the definition.
We denote this cochain complex by $A^{p,*}_{\db, W^s}(D)$.

The space of \emph{cocycles} defined by
\[ Z^{p,q}_{\db, W^s}(D)= \ker\left\{\dbar: A^{p,q}_{\db,W^s}(D) \to A^{p,q+1}_{\db,W^s}(D)\right\}\]
is a closed subspace of $A^{p,q}_{\db,W^s}(D)$, by the continuity of $\dbar$ in the graph norm.  We say that a realization is \emph{closed} if each $A^{p,q}_{\db,W^s}(D)$ is a Hilbert space in the graph norm. It is not difficult to  see that $A^{p,*}_{W^s}(D)$ and $A^{p,*}_{c,W^s}(D)$ are closed (see below, Proposition~\ref{prop-maxrealproperties}, part~\ref{part-2}). On the other hand one can show that the extendable realization $A^{p,*}_{\mathrm{ext}, W^s}(D)$ is not closed in general. For a  closed realization, $Z^{p,q}_{\db, W^s}(D)$ is also a
closed subspace of $W^s_{p,q}(\ol{D})$ (and therefore a Hilbert space in the norm of $W^s_{p,q}(\ol{D})$).

 The space of \emph{coboundaries}
\[ B^{p,q}_{\db, W^s}(D)= \img\left\{\dbar: A^{p,q-1}_{\db,W^s}(D) \to A^{p,q}_{\db,W^s}(D)\right\}\]
is contained in  $Z^{p,q}_{\db, W^s}(D)$ since $\dbar^2=0$. The \emph{cohomology groups} of the  complex \eqref{eq-realization}  are the vector spaces
\[ H^{p,q}_{\db, W^s}(D)=  Z^{p,q}_{\db, W^s}(D)/B^{p,q}_{\db, W^s}(D). \]
As the quotient of the inner product space $ Z^{p,q}_{\db, W^s}(D)$ by the subspace $B^{p,q}_{\db, W^s}(D)$,
the cohomology group $ H^{p,q}_{\db, W^s}(D)$ has the structure of a \emph{semi-inner-product (SIP) space}, as
explained in Proposition~\ref{prop-quotient} below, where it is shown that $H^{p,q}_{\db, W^s}(D)$ has  a natural sesquilinear form $\ipr{,}$ (the semi-inner-product), which differs from a genuine inner-product only in the fact that $\ipr{x,x}^{\frac{1}{2}}$ is a semi-norm, and not necessarily a norm (i.e., we may have $\ipr{x,x}^{\frac{1}{2}}=0$ for $x\neq 0$).

Since $H^{p,q}_{\db, W^s}(D)$ is the quotient of two topological vector spaces, it has the natural quotient
topology. It is well-known that this topology is not necessarily Hausdorff. In fact, it is Hausdorff if and only if
$ B^{p,q}_{\db, W^s}(D)$ is closed as a subspace of $Z^{p,q}_{\db, W^s}(D)$. It will be seen from Proposition~\ref{prop-quotient}
below that the quotient topology of  $H^{p,q}_{\db, W^s}(D)$  is
also induced by the semi-inner-product through its associated semi-norm. Therefore, the use of the semi-inner-product structure provides a
concrete approach to working with the otherwise pathological non-Hausdorff topologies that one encounters in this investigation.

We denote by
\[ H^{p,q}_{W^s}(D), H^{p,q}_{c,W^s}(D), H^{p,q}_{\mathrm{ext},W^s}(D)\]
 the  \emph{$W^s$-cohomology,  the minimal $W^s$-cohomology, and the extendable cohomology} respectively, which are by definition
 the cohomologies associated with the three realizations introduced in  Section~\ref{sec-three-realizations}.
\subsection{Some basic properties}
\begin{prop}\label{prop-maxrealproperties}
	Let $D$ be a bounded domain in $\cx^n$, let $0\leq p \leq n$ and let $s$ be an integer.
	\begin{enumerate}
		\item \label{part-1}	We have
		\[ A^{p,0}_{W^s}(D) \subset (W^{s+1}_{\mathrm{loc}})_{p,0}(D) \cap W^s_{p,0}(\ol{D}).\]
		\item  \label{part-2} For each {$0\leq q\leq n$}, the inner product spaces $A^{p,q}_{W^s}(D)$ and $A^{p,q}_{c,W^s}(D)$ are Hilbert spaces, i.e., each of them
		is complete in the graph norm; consequently, both the Sobolev  and minimal realizations are closed.
		
		\item \label{part-3} Assume that $D$ is Lipschitz.  In the graph norm  \eqref{eq-graphnorm},  the subspace $\mathcal{C}^\infty_{p,q}(\ol{D})$ of forms smooth up to the boundary
			is dense in $A^{p,q}_{W^s}(D)$.
		
		\item\label{part-4} Assume that $D$ is Lipschitz. In the graph norm  \eqref{eq-graphnorm},  the subspace $\mathcal{D}^{p,q}({D})$ of smooth compactly supported forms
			is dense in $A^{p,q}_{c,W^s}(D)$.
	
		\item \label{part-6} $H^{p,0}_{c,W^s}(D)=0$.
		\item \label{part-7}	Let $D'$ be a bounded open set such that  $D\Subset D'\subset \cx^n$. Then for each $f\in A^{p,q}_{\mathrm{ext},W^s}(D)$, there is an $\wt{f}\in A^{p,q}_{c, W^s}(D')$, the minimal realization on $D'$, such  that $\wt{f}|_D=f$.
			\item \label{part-5 } Let $D\subset \cx^n$ be a bounded domain, let $0\leq p \leq n$, and let $s$ be an integer. Then:
		\[ W^{s+1}_{p,0}(\ol{D})= A^{p,0}_{\mathrm{ext}, W^s}(D). \]
	\end{enumerate}
\end{prop}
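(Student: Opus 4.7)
The plan is to prove the stated equality by a double inclusion, using only part~\ref{part-1} of the proposition as a non-elementary input. Both inclusions reduce, once stated correctly, to elementary manipulations with the extension operator \eqref{eq-extension} and a smooth cutoff.

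For the inclusion $W^{s+1}_{p,0}(\ol{D}) \subset A^{p,0}_{\mathrm{ext}, W^s}(D)$, given $f \in W^{s+1}_{p,0}(\ol{D})$ I would apply the extension operator \eqref{eq-extension} coefficientwise to produce $F \in W^{s+1}_{p,0}(\cx^n)$ with $F|_D = f$. Since $W^{s+1}(\cx^n) \hookrightarrow W^s(\cx^n)$ continuously and $\dbar$, as a first-order differential operator, maps $W^{s+1}$ into $W^s$ on coefficients, both $F \in W^s_{p,0}(\cx^n)$ and $\dbar F \in W^s_{p,1}(\cx^n)$ hold, so $F \in A^{p,0}_{W^s}(\cx^n)$. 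This witnesses $f$ as an element of $A^{p,0}_{\mathrm{ext}, W^s}(D)$.

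For the reverse inclusion, I would take $f \in A^{p,0}_{\mathrm{ext}, W^s}(D)$ and choose an extension $F \in A^{p,0}_{W^s}(\cx^n)$. The main step is to upgrade $F$ from $W^s$-regularity to $W^{s+1}$-regularity in a neighborhood of $\ol{D}$. Pick any bounded open set $D'$ with $\ol{D} \Subset D' \Subset \cx^n$; then $F|_{D'} \in A^{p,0}_{W^s}(D')$, so part~\ref{part-1} gives $F \in (W^{s+1}_{\mathrm{loc}})_{p,0}(D')$. Now choose a cutoff $\chi \in \mathcal{D}(D')$ with $\chi \equiv 1$ on a neighborhood of $\ol{D}$. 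Because multiplication by a smooth compactly supported function preserves the Sobolev class $W^{s+1}(\cx^n)$, and $\chi F$ is compactly supported inside $D'$ where $F$ is $W^{s+1}_{\mathrm{loc}}$, we obtain $\chi F \in W^{s+1}_{p,0}(\cx^n)$. Since $(\chi F)|_D = F|_D = f$, this exhibits $f$ as the restriction to $D$ of a $W^{s+1}$ form on $\cx^n$, so $f \in W^{s+1}_{p,0}(\ol{D})$.

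The only non-routine ingredient is the interior elliptic regularity statement invoked through part~\ref{part-1}, itself a consequence of the overdetermined ellipticity of $\dbar$ on $(p,0)$-currents (visible by a Plancherel argument applied to $\chi F$, using that the symbol $(\bar\xi_1,\dots,\bar\xi_n)$ has norm $|\xi|$). Once this is granted the remainder is a purely formal cutoff argument, so I expect no other obstacle in the proof of this final part.
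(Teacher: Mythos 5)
Your proposal addresses only the final item of the proposition, the equality $W^{s+1}_{p,0}(\ol{D}) = A^{p,0}_{\mathrm{ext}, W^s}(D)$ (taking parts (1)--(6) as given), and for that item it is correct and essentially the paper's own argument: the easy inclusion via the extension operator \eqref{eq-extension}, and the reverse inclusion via the interior regularity of $\dbar$ on $(p,0)$-currents from part (1) applied on an intermediate domain $D'$, followed by a cutoff. The paper merely performs the same two formal steps in the opposite order --- it first produces a compactly supported extension in $A^{p,0}_{c,W^s}(D')$ using the preceding item and then invokes part (1) --- so there is no substantive difference.
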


\begin{proof}
	\begin{enumerate}[wide, labelwidth=!, labelindent=0pt]
		\item

	Notice that by definition  a form belongs to $A^{p,0}_{W^s}(D)$ if and only if each coefficient function belongs to the space of functions $A^{0,0}_{W^s}(D)$. Therefore,
	we can assume without loss of generality that $p=0$.
	
	Let  $U$ and $V$ be open subsets  such that
	$ U\Subset V \Subset D,$ and let $\chi$ be a smooth compactly supported function on $\cx^n$ such that $\chi\equiv 1$ on $U$ and $\chi$ is supported inside $V$.  Now let $f\in A^{0,0}_{W^s}(D)$, and
	set $g=\chi\cdot f$, where $g$ is understood to be extended by zero outside $D$. Then since $\wh{g}= \wh{\chi}* \wh{f}$,  we easily conclude
	that ${g}\in W^s(\cx^n)$. Similarly, we can see that $\dbar g \in W^s_{0,1}(\cx^n)$, i.e., for each $j$, the derivative
	\begin{equation}\label{eq-dgdz}
	\dfrac{\partial g}{\partial \ol{z_j}}\in W^s(\cx^n).
	\end{equation}
	Denoting as usual the coordinates of $\cx^n$ by $(z_1,\dots, z_n)$, with $z_j=x_j+iy_j$, we denote the corresponding Fourier variables
	by $\zeta_j=\xi_j+i \eta_j$. Then notice that
	\[\wh{\dfrac{\partial g}{\partial \ol{z_j}}} = \frac{1}{2}\left\{\left(\frac{\partial}{\partial x_j}+ i \frac{\partial}{\partial y_j}\right)g\right\} \widehat{\phantom{\frac{\partial}{\partial}}}= \frac{1}{2}\cdot 2\pi i (\xi_j+ i \eta_j)\wh{g}=\pi i \zeta_j \wh{g}. \]
	Therefore \eqref{eq-dgdz} gives that for each $j$ we have
	\[ \int_{\cx^n} \abs{\zeta_j}^2 \abs{\wh{g}(\zeta)}^2 (1+\abs{\zeta}^2)^s dV(\zeta) <\infty. \]
	Summing this from $j=1$ to $n$, and also adding the inequality  $ \int_{\cx^n} \abs{\wh{g}(\zeta)}^2 (1+\abs{\zeta}^2)^s dV(\zeta) <\infty $
	(since $g\in W^s(\cx^n)$), we conclude that
	\[  \int_{\cx^n} \abs{\wh{g}(\zeta)}^2 (1+\abs{\zeta}^2)^{s+1} dV(\zeta) <\infty,\]
	i.e., $g\in W^{s+1}(\cx^n)$.
	
	Notice that $g|_U = f|_U$. Therefore,
	each point  of $D$ has a neighborhood $U$ such that $f|_U \in W^{s+1}(U)$. It follows
	that $f\in W^{s+1}_{\rm loc}(D)$.
	
	\item   Let $\{f_j\}\subset A^{p,q}_{W^s}(D)$ be a  Cauchy sequence in the graph norm. Then there exist
	$f\in W^s_{p,q}(\ol{D})$ and $g\in W^{s+1}_{p,q+1}(\ol{D})$ such that
	$f_j \to f$ in $W^s_{p,q}(\ol{D})$ and $\dbar f_j \to g$ in
	$W^s_{p,q+1}(\ol{D})$. Thanks to the continuous inclusion of the Sobolev space $W^s_{p,q}(\ol{D})$ in the space $\mathcal{D}'_{p,q}(D)$
	of currents, the latter assumption implies  $g=\dbar f$, so $f_j\to f$ in the graph norm.
	
	Since $A^{p,q}_{c,W^s}(D)$
	is a closed subspace of $A^{p,q}_{W^s}(D)$, it is  therefore a Hilbert space in the subspace topology.
	
	\item The proof follows the same lines as the classical argument
		for $s=0$, which may be found, e.g., in \cite[Proposition~2.3, part (ii)]{straubebook}.  After using a partition of unity on $\ol{D}$, we only
		need to consider the case of a form $u$ supported compactly
		in a neighborhood   $U$  in $\ol{\Omega}$ of a point $P\in b\Omega$.  Since the boundary $b\Omega$ is
		Lipschitz, there is a cone $\Gamma$ in $\cx^n$ with vertex at the origin and an $a>0$ such that whenever {$z\in U\cap\Omega$}, $\zeta \in \Gamma$, and $\abs{\zeta}<a$ we
		have $z-\zeta \in \Omega$. We choose a cutoff $\phi\in \mathcal{C}^\infty_0(\Gamma\cap B(0,a))$ with $\phi\geq 0$ and $\int\phi =1$, where
		$B(0,a)$ is an open ball with center at the origin and radius $a>0$. For $\epsilon>0$ set $\phi_\epsilon (z)= \epsilon^{-2n}\phi\left(\frac{z}{\epsilon}\right)$,
		and let {$E u\in W^s_{p,q}(\cx^n)$ be the extension given by \eqref{eq-extension}} (recall here that $u$ and $\dbar u$ have coefficients in $W^s(\ol{U})$). Then one can verify that as $\epsilon\to 0$, the restriction
		$(\phi_\epsilon\ast E u)|_{\ol{\Omega}} \to u$ in the graph norm. For details see \cite{straubebook}.
	
\item  {Here, we follow the proof of \cite[Lemma~4.3.2, part (ii)]{chenshaw}.  Using a partition of unity, we may assume that $\ol{D}$ is star-shaped with a center at $0$.  For $u\in A^{p,q}_{c,W^s}(D)$, we let $\zs u$ denote the zero extension as in \eqref{eq-zeroext}, and note that $\dbar\zs u=\zs\dbar u$ by definition.  For $\epsilon>0$, we define $\tilde u^{-\epsilon}(z)=\zs u\left(\frac{z}{1-\epsilon}\right)$, so that $\tilde u^{-\epsilon}$ is compactly supported in $D$ and $\tilde u^{-\epsilon}\rightarrow\zs u$ in the graph norm.  Regularizing this approximating sequence by convolution will complete the proof.}

	\item If $f\in Z^{p,0}_{c,W^s}(D)= H^{p,0}_{c,W^s}(D)$,
	then its zero-extension is a compactly supported holomorphic {current} on $\cx^n$, and therefore a compactly supported
	holomorphic form by Weyl's lemma. This vanishes by the identity principle.
	
	\item By definition, there is an ${f_0}\in A^{p,q}_{W^s}({\cx^n})$ such that $f = {f_0}|_D$.  Let $\chi\in \mathcal{C}^\infty_0(\cx^n)$ be a cutoff such that $\chi\equiv 1$ on $D$ and $\chi\equiv 0$ off $D'$.  Then we can take $\wt{f}= \chi\cdot {f_0}$.
	
		\item 	Let  $f\in A^{p,0}_{\mathrm{ext},W^s}(D)$, so that by Part~\ref{part-7} above  there is an
	$\wt{f}\in A^{p,0}_{c,W^s}(D')$ such that $f =\wt{f}|_D$.  By Part~\ref{part-1} above, $\wt{f}\in (W^{s+1}_{\rm loc})_{p,0}(D')$. Therefore, $f=\wt{f}|_D\in W^{s+1}_{p,0}(\ol{D})$.
	
		\end{enumerate}
\end{proof}

\subsection{Relation of extendable cohomology with Sobolev cohomology}

The following result is at the heart of our approach to function theory on annuli:

\begin{thm} \label{thm-istar} Let $D\subset\cx^n$ be a bounded  domain,  let $0\leq p \leq n$, and let $s\in \mathbb{Z}$.
	Then the inclusion map
	\begin{equation}\label{eq-i}
	i:   A^{p,*}_{W^{s+1}}(D) \hookrightarrow A^{p,*}_{\mathrm{ext}, W^s}(D)
	\end{equation}
	is a continuous injective cochain morphism.
	The induced linear map at the level of cohomology (see Proposition~\ref{prop-ind-cont} below)
	\begin{equation}\label{eq-istar}
	i_*^{p,q}: H^{p,q}_{W^{s+1}}(D)\to H^{p,q}_{\mathrm{ext},W^s}(D)
	\end{equation}
	is a continuous bijection of semi-inner-product spaces for each $q$. Let $U\supset \ol{D}$ be a  bounded pseudoconvex domain,  and $N$ be the $\dbar$-Neumann operator of  $U$. Then the inverse of the map  $i_*^{p,q}$ in  \eqref{eq-istar} is given  by (with $f\in Z^{p,q}_{\mathrm{ext}, W^s}(D)$)
	\begin{equation}
	\label{eq-istarinv}
	( i_*^{p,q})^{-1} \left(\cls{f}{H^{p,q}_{\mathrm{ext}, W^s}(D)}\right)=  \cls{(\dbar^* N\dbar \wt{f})|_{D}}{H^{p,q}_{W^{s+1}}(D)},
	\end{equation}
	where  $\wt{f}$ is an extension of $f$ as an element of $A^{p,q}_{c,W^s}(U)$ (see Part (7) of Proposition~\ref{prop-maxrealproperties}).
\end{thm}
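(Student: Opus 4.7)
The plan is first to verify that $i$ is a continuous injection of cochain complexes, and then to exhibit an explicit two-sided inverse of the induced map $i_*^{p,q}$ on cohomology using the canonical solution operator $\dbar^*N$ on the pseudoconvex $U$. The crucial technical input throughout is interior elliptic regularity for the Kohn Laplacian $\Box=\dbar\dbar^*+\dbar^*\dbar$, which is elliptic away from $bU$.

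\textbf{The inclusion $i$.} If $f\in A^{p,q}_{W^{s+1}}(D)$, applying the extension operator $E$ of \eqref{eq-extension} coefficientwise gives $Ef\in W^{s+1}_{p,q}(\cx^n)$; since $\dbar$ costs at most one Sobolev derivative, $\dbar(Ef)\in W^s_{p,q+1}(\cx^n)$, so $Ef\in A^{p,q}_{W^s}(\cx^n)$ exhibits $f$ as a member of $A^{p,q}_{\mathrm{ext},W^s}(D)$. Continuity of $i$ in the graph norms follows from the bounded inclusion $W^{s+1}(\ol D)\hookrightarrow W^s(\ol D)$, and $i$ commutes with $\dbar$ by construction, producing the continuous induced map $i_*^{p,q}$ on cohomology.

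\textbf{Candidate inverse.} For $f\in Z^{p,q}_{\mathrm{ext},W^s}(D)$, extend using part~(7) of Proposition~\ref{prop-maxrealproperties} to some $\wt f\in A^{p,q}_{c,W^s}(U)$, and set $w=\dbar^*N\dbar \wt f$. Since $\dbar^2\wt f=0$, the standard $\dbar$-Neumann identities give $\dbar w=\dbar\wt f$ on $U$. The coefficients of $\dbar\wt f$ lie in $W^s$ on $U$, and interior elliptic regularity for $\Box$ gives $N(\dbar\wt f)\in W^{s+2}_{\mathrm{loc}}(U)$, hence $w\in W^{s+1}_{\mathrm{loc}}(U)$; since $\ol D\Subset U$, a cutoff argument places $w|_{\ol D}\in W^{s+1}_{p,q}(\ol D)$, and on $D$ we have $\dbar w=\dbar f=0$, so $w|_D\in Z^{p,q}_{W^{s+1}}(D)$. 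Independence of the representative $f$ and of $\wt f$ reduces (using part~(7) of Proposition~\ref{prop-maxrealproperties}) to the case $\rho:=\wt f_1-\wt f_2\in A^{p,q}_{c,W^s}(U)$ with $\rho|_D=0$: then $\rho-\dbar^*N\dbar\rho$ is $\dbar$-closed on $U$ and, for $q\geq 1$, equals $\dbar\sigma$ with $\sigma=\dbar^*N(\rho-\dbar^*N\dbar\rho)\in W^{s+1}_{\mathrm{loc}}(U)$, giving $(\dbar^*N\dbar\rho)|_D=-\dbar\sigma|_D\in B^{p,q}_{W^{s+1}}(D)$.

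\textbf{The two inversion identities.} For $i_*^{p,q}\circ J=I$: the form $\wt f-\dbar^*N\dbar\wt f$ is $\dbar$-closed on $U$, so for $q\geq 1$ it equals $\dbar h$ with $h\in W^{s+1}_{\mathrm{loc}}(U)$ via the canonical solution on the pseudoconvex $U$; multiplying $h$ by a cutoff supported in $U$ and equal to $1$ on a neighborhood of $\ol D$ produces an element of $A^{p,q-1}_{\mathrm{ext},W^s}(D)$ whose $\dbar$ on $D$ equals $f-w|_D$, hence $[f]_{\mathrm{ext}}=[w|_D]_{\mathrm{ext}}$. For $J\circ i_*^{p,q}=I$: given $g\in Z^{p,q}_{W^{s+1}}(D)$, the same calculation with $\wt g$ in place of $\wt f$ exhibits $g-(\dbar^*N\dbar\wt g)|_D$ as $\dbar h|_D$ with $h|_D\in A^{p,q-1}_{W^{s+1}}(D)$, using $g\in W^{s+1}$ on $D$ and interior regularity of $h$ on $\ol D\Subset U$. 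The case $q=0$ is handled by part~(5) of Proposition~\ref{prop-maxrealproperties}, which identifies $Z^{p,0}_{W^{s+1}}(D)=Z^{p,0}_{\mathrm{ext},W^s}(D)$ so that $i_*^{p,0}$ is tautologically a bijection.

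\textbf{Main obstacle.} The principal subtlety is giving $N$ a meaning on forms with negative-order Sobolev coefficients when $s<0$. I expect this to be handled by choosing $U$ smoothly bounded and strongly pseudoconvex (e.g.\ a ball containing $\ol D$), for which $N$ is continuous on $W^s_{p,q+1}$ for every integer $s$; since $\ol D\Subset U$, only interior regularity of $N$ near $\ol D$ is actually used in the argument, so the boundary behavior of $N$ on $bU$ never intervenes.
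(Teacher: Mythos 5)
Your proposal is correct and follows essentially the same route as the paper: the inclusion via the extension operator, the inverse $f\mapsto (\dbar^*N\dbar\wt f)|_D$ built from the canonical solution on a pseudoconvex neighborhood $U$, the one-derivative interior elliptic gain, the distributional handling of $N$ for $s<0$, and the $q=0$ case via $A^{p,0}_{\mathrm{ext},W^s}(D)=W^{s+1}_{p,0}(\ol D)$. The only (cosmetic) difference is organizational: the paper proves injectivity and surjectivity of $i_*^{p,q}$ separately, which makes the well-definedness of the inverse automatic, whereas you construct the two-sided inverse directly and verify well-definedness by hand.
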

\begin{rem}
	
	\begin{enumerate}[wide, labelwidth=!, labelindent=0pt]
		\item The map $( i_*^{p,q})^{-1}$ exists algebraically as a linear map but is  not known to be  continuous. This is because the extension operation $f\mapsto \wt{f}$
		is not known to be continuous.  {In fact, it is possible to construct explicit examples to show that $(i_*^{p,0})^{-1}$ is not continuous for any $0\leq p\leq n$.}
		\item Notice that the dependence of the right hand side of \eqref{eq-istarinv} on the pseudoconvex neighborhood $U$, its $\dbar$-Neumann operator $N$,
		and the particular extension $\wt{f}$ of $f$,		is illusory, since the left hand side is defined independently of $U$.
		\item If the topology on  $H^{p,q}_{\mathrm{ext},W^s}(D)$ is  Hausdorff,
		then it follows from the continuity of  $i_*^{p,q}$ that the topology on $H^{p,q}_{W^{s+1}}(D)$ is also Hausdorff. If these spaces are also complete, it  follows by the
		open-mapping/closed-graph theorem that $i_*^{p,q}$ is a linear homeomorphism.
        \item {In the following, we will use the $\dbar$-Neumann operator on currents with coefficients in $W^s(U)$ even when $s<0$.  For such currents, we may use the self-adjointness of the $\dbar$-Neumann operator to define its action distributionally, i.e., given $f\in W^s_{p,q}(U)$ for $s<0$, we define $(N f,\varphi)=(f,N\varphi)$ for all compactly supported smooth forms
        	 $\varphi\in \mathcal{D}^{p,q}(U)$, where $(\cdot,\cdot)$ is the natural extension of the $L^2$-inner product on $(p,q)$-forms by density to an action of a current of degree $(p,q)$ on a $(p,q)$-form of compact support.  {To obtain interior regularity for the canonical solution,} we choose an intermediate set $U'$ satisfying $D\Subset U'\Subset U$ and let $\chi\in C^\infty_0(U)$ be supported in $U$ and equal one identically on $U'$.  Then for $s\leq 0$ and $f\in W^s_{p,q}(U)$ supported in $U'$}
            \begin{align*}
              \norm{\dbar^* N f}_{W^{s}(D)}&=\sup_{\substack{\varphi\in \mathcal{D}^{p,q}(D)\\\varphi\neq 0}}\frac{(\dbar^*Nf,\varphi)_{L^2(D)}}{\|\varphi\|_{W^{-s}(D)}}
              =\sup_{\substack{\varphi\in \mathcal{D}^{p,q}(D)\\\varphi\neq 0}}\frac{(\dbar^*N(\chi f),\tilde\varphi)_{L^2(U)}}{\|\tilde\varphi\|_{W^{-s}(U)}}\\
              &=\sup_{\substack{\varphi\in \mathcal{D}^{p,q}(D)\\\varphi\neq 0}}\frac{(f,\chi N\dbar\tilde\varphi)_{L^2(U)}}{\|\tilde\varphi\|_{W^{-s}(U)}}
              \leq \sup_{\substack{\varphi\in \mathcal{D}^{p,q}(D)\\\varphi\neq 0}}\frac{\|f\|_{W^{s-1}(U)}\cdot \|\chi N\dbar\tilde\varphi\|_{W^{-s+1}(U)}}{\|\tilde\varphi\|_{W^{-s}(U)}}\\&\leq C\|f\|_{W^{s-1}(U)},
            \end{align*}
           where $\wt{\varphi}$ denotes the zero-extension of $\varphi$. Here, we have used the known interior regularity for the canonical solution operator to the $\dbar^*$ equation, $N\dbar$, in Sobolev spaces with non-negative index.
	\end{enumerate}
\end{rem}
\begin{proof}[Proof of Theorem~\ref{thm-istar}] Notice first that $W^{s+1}_{p,q}(\ol{D}) \subset  A^{p,q}_{\mathrm{ext}, W^s}({D}) $, since there is an extension operator $E:W^{s+1}_{p,q}(\ol{D}) \to W^{s+1}_{p,q}(\cx^n)$.  It follows that  $A^{p,*}_{W^{s+1}}(D) \subset A^{p,*}_{\mathrm{ext}, W^s}(D)$, and that
	 the inclusion \eqref{eq-i} is algebraically a cochain morphism, since in
	both cochain complexes, the differential is the $\dbar$-operator in the sense of distributions. Since the topology on the subspace
	$A^{p,*}_{W^{s+1}}(D)$ of $W^{s+1}_{p,*}(\ol{D})$  is the graph topology coming from $W^{s+1}_{p,*}(D)$ and
	that on $A^{p,*}_{\mathrm{ext}, W^s}(D)$ is the  graph topology coming from $W^s_{p,*}(D)$, it is clear that $i$ is continuous.

	The map $i_*$ is continuous since it is induced by the (continuous)  cochain morphism $i$ (see  Proposition~\ref{prop-ind-cont} ).
	To see that $i_*$ is a bijection, first, consider the case $q=0$. Then $H^{p,0}_{W^{s+1}}(D)=Z^{p,0}_{W^{s+1}}(D)$, i.e.,
	$H^{p,0}_{W^{s+1}}(D)$ is the space of holomorphic $p$-forms with coefficients in $W^{s+1}(\ol{D})$. Similarly $H^{p,0}_{\mathrm{ext}, W^s}(D)= Z^{p,0}_{\mathrm{ext}, W^s}(D)$, the space
	of holomorphic $p$-forms which also lie in $A^{p,0}_{\mathrm{ext}, W^s}(D)$.  Therefore, $i_*^{p,0}$ is simply the inclusion map
	
	\[ Z^{p,0}_{W^{s+1}}(D)\hookrightarrow Z^{p,0}_{\mathrm{ext},W^s}(D)\]
	which, however, is actually a bijection,  since $ Z^{p,0}_{W^{s+1}}(D)= A^{p,0}_{\mathrm{ext}, W^s}(D)\cap \ker \dbar$,
	and by part~\ref{part-5 } of Proposition~\ref{prop-maxrealproperties}, we have
	that  $W^{s+1}_{p,0}(\ol{D})=A^{p,0}_{\mathrm{ext}, W^s}(D)$

	Now consider the case $q\geq 1$. Let $f\in Z^{p,q}_{W^{s+1}}(D)$ be such that $\cls{f}{H^{p,q}_{W^{s+1}}(D)}\in \ker (i_*^{p,q})$, i.e., there is  a $u\in A^{p,q}_{\mathrm{ext},W^s}(D)$ such that $\dbar u =f$ on $D$.
	As in the  statement of the theorem, let $U$ be a bounded pseudoconvex domain containing $\ol{D}$,  let $\wt{u}\in A^{p,q-1}_{c,W^s}(U)$  have compact support in $U$ and $\wt{u}|_D=u$ (see  Part (7) of Proposition~\ref{prop-maxrealproperties}), and set $\wt{f}=\dbar \wt{u}$.
	Let $v=\dbar^* N \wt{f}$ be the canonical solution of the $\dbar$-problem $\dbar v=\wt{f}$ in $U$. (Notice that $\dbar \wt{f}=0$ in $U$, and as in the statement of the theorem, $N$ denotes the
	$\dbar$-Neumann operator of  $U$.) Then, by interior elliptic gain in the $\dbar$-Neumann problem,
	we have that  $v_0:=v|_D \in W^{s+1}_{p,q-1}(\ol{D})$, and $\dbar v_0=f\in W^{s+1}_{p,q}(\ol{D})$. Therefore
	$v_0\in A^{p,q-1}_{W^{s+1}}(D)$, so that $f\in B^{p,q}_{W^{s+1}}(D)$. It  now follows that the class
	$\cls{f}{H^{p,q}_{W^{s+1}}(D)}=0$, so that $\ker (i_*^{p,q})=0$, and $i_*^{p,q}$ is injective.

	To show that $i_*^{p,q}$ is surjective, we construct a right inverse.
	If $[f]_{H^{p,q}_{\mathrm{ext}, W^s}(D)}$ is a class in $H^{p,q}_{\mathrm{ext}, W^s}(D)$ where $f\in Z^{p,q}_{\mathrm{ext},W^s}(D)$, by
	Part~(7) of Proposition~\ref{prop-maxrealproperties} there is an $\wt{f}\in A^{p,q}_{c,W^s}(U)$ such that
	$\wt{f}|_D=f$. Let $g=\dbar \wt{f}$ so that (since $\wt{f} \in A^{p,q}_{c,W^s}(U)$) we have $g \in B^{p,q+1}_{c,W^s}(U)$,  and set
	$u=\dbar^* N g$, the canonical solution of $\dbar u =g$. Let
	\begin{equation}
	\label{eq-u0}
	u_0 =u|_D = (\dbar^* N g)|_{D}
	\end{equation}
	Then by interior regularity, $u_0 \in W^{s+1}_{p,q}(\ol{D})$ and $\dbar u_0=0$, so $u_0\in Z^{p,q}_{W^{s+1}}(D)$. Also,
	since $\dbar(u-\wt{f})=0$ on $U$, it follows that $u-\wt{f}=\dbar v$ on
	$U$, where  we can take $v= \dbar^* N (u-\wt{f})$. Then we have $v_0=v|_D\in A^{p,q-1}_{\mathrm{ext},W^s}(D)$, so that on $D$, we have $u_0=f+\dbar v_0$. Then we have
	\begin{align*}
	i_*^{p,q}\left( [u_0]_{H^{p,q}_{W^{s+1}}(D)}\right)&= [ i(u_0)]_{H^{p,q}_{\mathrm{ext},W^s}(D)}=[ u_0]_{H^{p,q}_{\mathrm{ext}, W^s}(D)}\\&
	=[f+\dbar v_0]_{H^{p,q}_{\mathrm{ext}, W^s}(D)}=[f]_{H^{p,q}_{\mathrm{ext},W^s}(D)}.
	\end{align*}
	It now follows that $i_*^{p,q}$ is surjective, and from \eqref{eq-u0}, noting that $g=\dbar \wt{f}$, \eqref{eq-istarinv} follows.
\end{proof}

\section{The exact sequences}

\subsection{Proof of Theorem~\ref{thm-short}}

Define the restriction operator $\rhoa$ on the space of currents on $\Omega_1$ by setting for $f$ a current on
$\Omega_1$:
\[ \rhoa(f)= f|_\Omega.\]
It is clear that if $f\in  A^{p,*}_{\mathrm{ext},W^s}(\Omega_1)$ then ${\rhoa(f)} \in A^{p,*}_{\mathrm{ext}, W^s}(\Omega)$.
Further, from the definition of $A^{p,*}_{\mathrm{ext}, W^s}(\Omega)$ it follows that the restriction map $\rhoa$ is surjective and continuous as
a map from  $A^{p,*}_{\mathrm{ext},W^s}(\Omega_1)$ to $A^{p,*}_{\mathrm{ext}, W^s}(\Omega)$.

Let $\ea$ denote the operator which extends currents on the hole $\Omega_2$  to the whole envelope $\Omega_1$ by setting them equal to zero on the annulus $\Omega$:
\[ \ea(f) = \begin{cases} f &\text{on } \Omega_2\\ 0 & \text{on } \Omega,
\end{cases}\]
provided that this defines a current on $\Omega_1$. In particular, it is clear that if $f\in A^{p,*}_{c,W^s}(\Omega_2)$ then
$ \ea(f)\in A^{p,*}_{c,W^s}(\Omega_1)\subset A^{p,*}_{\mathrm{ext},W^s}(\Omega_1)$. It is clear that the map $\ea$ is continuous and injective.

Notice  that
the sequence of cochain complexes of inner-product spaces and continuous cochain maps
\begin{equation}\label{eq-sesq1}
0\to A^{p,*}_{c,W^s}(\Omega_2)\xrightarrow{\ea} A^{p,*}_{\mathrm{ext},W^s}(\Omega_1)\xrightarrow{\rhoa}A^{p,*}_{\mathrm{ext}, W^s}(\Omega)\to 0.
\end{equation}
is exact.  In view of the comments in the previous paragraph, we only need to verify exactness at the middle term, i.e., $\img \ea= \ker \rhoa$. But both these subspaces of
$ A^{p,*}_{\mathrm{ext},W^s}(\Omega_1)$ consist of restriction to $\Omega_1$ of those $F\in A^{p,*}_{W^s}(\cx^n)$ whose support is in $\ol{\Omega_2}$.

Therefore  (see \cite[Chapter XX, Theorem 2.1]{lang})  we obtain a long exact sequence of semi-inner-product spaces and
linear maps:
\begin{equation}\label{eq-long2}
\cdots \xrightarrow{(\rhoa_*)^{p,q-1}} H^{p,q-1}_{\mathrm{ext}, W^s}(\Omega)
\xrightarrow{c_\mathrm{A}^{p,q-1}}H^{p,q}_{c,W^s}(\Omega_2) \xrightarrow{(\ea_*)^{p,q}}H^{p,q}_{\mathrm{ext},W^s}(\Omega_1) \xrightarrow{(\rhoa_*)^{p,q}}H^{p,q}_{\mathrm{ext},W^s}(\Omega)
\xrightarrow{{c_\mathrm{A}^{p,q}}}\cdots\end{equation}
where the continuous maps $\rhoa_*$ and $\ea_*$ are induced by the cochain maps $\rhoa$ and $\ea$ respectively, and
$c_\mathrm{A}$
is the connecting homomorphism. One can easily check using the definition that for $f\in Z^{p,q}_{\mathrm{ext}, W^s}(\Omega)$,
we have
\begin{equation}
\label{eq-c2def}
c_\mathrm{A}^{p,q} \left( [f]_{H^{p,q}_{\mathrm{ext},W^s}(\Omega)}\right)= \left[\left.\left(\dbar \wt{f}\right)\right\vert_{\Omega_2}\right]_{H^{p,q+1}_{c,W^s}(\Omega_2)},
\end{equation}
where $\wt{f}\in A^{p,q}_{\mathrm{ext}, W^s}(\Omega_1)$  is an extension of $f$ (see Part~7 of Proposition~\ref{prop-maxrealproperties}).

For the envelope $\Omega_1$ and the hole $\Omega$  let
\[ (i_*^{\Omega_1})^{p,q} : H^{p,q}_{W^{s+1}}(\Omega_1)\to H^{p,q}_{\mathrm{ext},W^s}(\Omega_1)\]
and
\[ (i_*^{\Omega})^{p,q} : H^{p,q}_{W^{s+1}}(\Omega)\to H^{p,q}_{\mathrm{ext},W^s}(\Omega)\]
be the continuous isomorphisms given by Theorem~\ref{thm-istar}. {If we let $R^{p,q}_*$ and $\lambda^{p,q}$ be as in the statement of Theorem \ref{thm-short}, then we} have the following:

\begin{lem}\label{lem-modi2}
	The following equalities hold, where we suppress the superscripts $p,q$ from all maps for simplicity:
	\begin{enumerate}
		\item \label{part-1modi2} $\rhoa_* \circ i_*^{\Omega_1}= i_*^{\Omega}\circ R_*$,
		\item \label{part-2modi2}$\lambda= c_\mathrm{A}\circ i_*^{\Omega}$,
		\item \label{part-3modi3} {$\ea_*=0$.}
	\end{enumerate}
\end{lem}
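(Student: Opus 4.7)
The plan is to treat the three identities separately. Parts (\ref{part-1modi2}) and (\ref{part-2modi2}) will be routine diagram chases through the definitions of the induced maps and the explicit formulas for $c_\mathrm{A}$ and $\lambda$; part (\ref{part-3modi3}) will require an honest analytic argument using the $\dbar$-Neumann operator of a pseudoconvex superset of $\ol{\Omega_1}$. For (\ref{part-1modi2}), I would pick a cocycle $f\in Z^{p,q}_{W^{s+1}}(\Omega_1)$ and unwind both compositions: on the left, $i^{\Omega_1}_*$ is induced by the set-theoretic inclusion \eqref{eq-i} and leaves $f$ unchanged, after which $\rhoa_*$ simply restricts to $\Omega$; on the right, $R_*$ restricts first and then $i^\Omega_*$ views the class in the extendable realization. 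Both routes produce $\cls{f|_\Omega}{H^{p,q}_{\mathrm{ext},W^s}(\Omega)}$.

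For (\ref{part-2modi2}), I would start with a cocycle $f\in Z^{p,q}_{W^{s+1}}(\Omega)$. Applying $i^\Omega_*$ yields $\cls{f}{H^{p,q}_{\mathrm{ext},W^s}(\Omega)}$, and $c_\mathrm{A}^{p,q}$ is then computed from \eqref{eq-c2def} using \emph{any} extension $\wt{f}\in A^{p,q}_{\mathrm{ext},W^s}(\Omega_1)$ of $f$. The key step is to make the specific choice $\wt{f}:=(Ef)|_{\Omega_1}$, where $E:W^{s+1}(\ol{\Omega})\to W^{s+1}(\cx^n)$ is the Sobolev extension of \eqref{eq-extension}. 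Since $Ef\in W^{s+1}_{p,q}(\cx^n)\subset A^{p,q}_{W^s}(\cx^n)$, this $\wt{f}$ indeed lies in $A^{p,q}_{\mathrm{ext},W^s}(\Omega_1)$, and then $(\dbar\wt{f})|_{\Omega_2}=(\dbar Ef)|_{\Omega_2}$ matches the definition \eqref{eq-lambdadef} of $\lambda$ exactly.

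For (\ref{part-3modi3}), the case $q=0$ is immediate from $H^{p,0}_{c,W^s}(\Omega_2)=0$ (Proposition~\ref{prop-maxrealproperties}(\ref{part-6})). For $q\geq 1$, I would fix a smoothly bounded strictly pseudoconvex open set $U$ containing $\ol{\Omega_1}$ (a large ball works). Given $f\in Z^{p,q}_{c,W^s}(\Omega_2)$, the zero-extension $\zs f$ of \eqref{eq-zeroext} is a $\dbar$-closed element of $W^s_{p,q}(\cx^n)$ supported in $\ol{\Omega_2}$. I would solve $\dbar v=(\zs f)|_U$ via the canonical solution $v=\dbar^* N((\zs f)|_U)$ on $U$, where $N$ is interpreted distributionally as in the fourth item of the remarks after Theorem~\ref{thm-istar}. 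Multiplying by a cutoff $\chi\in \mathcal{D}(U)$ identically one on $\ol{\Omega_1}$ produces $\chi v\in A^{p,q-1}_{W^s}(\cx^n)$, so $v|_{\Omega_1}$ lies in $A^{p,q-1}_{\mathrm{ext},W^s}(\Omega_1)$ and satisfies $\dbar(v|_{\Omega_1})=(\zs f)|_{\Omega_1}=\ea(f)$, forcing $\ea_*([f])=0$. The main obstacle here is making the canonical-solution step work for arbitrary integer $s$, including negative values; this is exactly what the distributional extension of $N$ sketched in the remarks following Theorem~\ref{thm-istar} provides, which is why a smoothly bounded strictly pseudoconvex $U$ is the natural choice.
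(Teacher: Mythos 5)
Your proposal is correct and follows essentially the same route as the paper: parts (1) and (2) are the same direct computations, with the same choice of extension $\wt{f}=(Ef)|_{\Omega_1}$ feeding the formula \eqref{eq-c2def} for the connecting map, and part (3) is the paper's argument that $\ea f$ is $\dbar$-exact via the canonical solution $\dbar^{*}N$ on a pseudoconvex neighborhood of $\ol{\Omega_1}$, with the $q=0$ case handled by Proposition~\ref{prop-maxrealproperties}(\ref{part-6}). Your cutoff step merely makes explicit why the resulting potential lies in $A^{p,q-1}_{\mathrm{ext},W^s}(\Omega_1)$, a detail the paper leaves implicit.
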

\begin{proof}
	\begin{enumerate}[wide, labelwidth=!, labelindent=0pt]
		\item Let $f\in Z^{p,q}_{W^{s+1}}(\Omega_1)$. Then a direct computation shows that
		\[ \rhoa_* \circ i_*^{\Omega_1}\left(\left[f\right]_{H^{p,q}_{W^{s+1}}(\Omega_1)}\right)=\left[f|_\Omega\right]_{H^{p,q}_{\mathrm{ext},W^s}(\Omega)}= i_*^{\Omega}\circ R_* \left(\left[f\right]_{H^{p,q}_{W^{s+1}}(\Omega_1)}\right)\]
		and thus the equality in Part~(\ref{part-1modi2}) holds.
		
		For Part~(\ref{part-2modi2}), let $f\in Z^{p,q}_{W^{s+1}}(\Omega)$. Then a direct computation shows that
		\begin{align*}
		(c_\mathrm{A}\circ i_*^{\Omega})([f]_{H^{p,q}_{W^{s+1}}(\Omega)})&=c_\mathrm{A} \left([f]_{H^{p,q}_{\mathrm{ext}, W^s}({\Omega})}\right) \\
		&= \left[\left.\left(\dbar \wt{f}\right)\right\vert_{\Omega_2}\right]_{H^{p,q+1}_{c,W^s}(\Omega_2)} &\text{ (where $\wt{f}\in A^{p,q}_{\mathrm{ext}, W^s}(\Omega_1)$  is an extension of $f$) }\\
		&= \left[\left.\left(\dbar E f\right)\right\vert_{\Omega_2}\right]_{H^{p,{q+1}}_{c, W^s}(\Omega_2)} & \text{with $E$ as in \eqref{eq-extension}}\\
		&=	\lambda\left( [f ]_{H^{p,q}_{W^{s+1}}(\Omega)}\right)&  \text{ see \eqref{eq-lambdadef}}.
		\end{align*}
		
		{For Part~(\ref{part-3modi3}), we first assume that $1\leq q\leq n$ and let $f\in Z^{p.q}_{c,W^s}(\Omega_2)$. Let $U$ be a bounded pseudoconvex domain containing $\overline{\Omega}_1$, let $\wt{f}\in Z^{p,q}_{c,W^s}(U)$ be the extension by zero of $f$ to $U$, and let  $N$ be the $\dbar$-Neumann operator on $U$.  Then
			\begin{align*}
			\ea_*[f]_{H^{p,q}_{c,W^s }(\Omega_2)}&=[\ea f]_{H^{p,q}_{\mathrm{ext},W^s }(\Omega_1)}\\
			&=\left[\left.\left(\dbar\dbar^* N\wt{f}\right)\right\vert_{\Omega_1}\right]_{H^{p,q}_{\mathrm{ext},W^s }(\Omega_1)}\\
			&=0,
			\end{align*}
			where in the first step, note that $\ea f$, being the zero extension of $f\in Z^{p.q}_{c,W^s}(\Omega_2)$, is automatically in $Z^{p,q}_{\mathrm{ext},W^s}$,
			and  in the second step, we have used the fact that $\dbar\ea f=0$ on $\Omega_1$, and hence $\dbar\wt{f}=0$ on $U$.  When $q=0$, this is a trivial consequence of Proposition~\ref{prop-maxrealproperties}, part~\ref{part-6}.}
		
	\end{enumerate}	
\end{proof}
Lemma~\ref{lem-modi2} above is equivalent to the fact that  in the following  diagram the triangle
and the rectangle both commute:
\begin{equation}\label{diag-trapezoid}
\begin{tikzcd}
%H^{p,q}_{c,W^s}(\Omega_2) \arrow{r}{0}\arrow[swap]{dr}{0}
& H^{p,q}_{\mathrm{ext}, W^{s}}(\Omega_1)\arrow{r}{(\rhoa_*)^{p,q}}
&H^{p,q}_{\mathrm{ext},W^s}(\Omega)\arrow{r}{c^{p,q}_\mathrm{A}}& H^{p,q+1}_{c,W^s}(\Omega_2) \\
& H^{p,q}_{W^{s+1}}(\Omega_1)\arrow[u, "(i_*^{\Omega_1})^{p,q}" ']  \arrow{r}{R^{p,q}_*} & H^{p,q}_{W^{s+1}}(\Omega)\arrow{u}{i^{p,q}_*} \arrow[swap]{ur}{\lambda^{p,q}}&
\end{tikzcd}
\end{equation}
Combining this with $\ea_*=0$ we see that the following sequence is exact:
\[	 \cdots \xrightarrow{{R}^{p,q-1}_*} H^{p,q-1}_{W^{s+1}}(\Omega)
\xrightarrow{\lambda^{p,q-1}}H^{p,q}_{c,W^s}(\Omega_2) \xrightarrow{0}H^{p,q}_{W^{s+1}}(\Omega_1) \xrightarrow{{R}^{p,q}_*}H^{p,q}_{W^{s+1}}(\Omega)
\xrightarrow{\lambda^{p,q}}\cdots\]
Therefore for each $q$, the map $R^{p,q}_*$ is injective, and the map $\lambda^{p,q}$ is surjective. It follows that  \eqref{eq-short-main} is exact for each $q$.
We already know that the map
$R_*$ is continuous, being induced by a continuous map of cochain complexes. The continuity of the map $\lambda$ follows from the formula
\eqref{eq-lambdadef}.

\subsection{Preliminaries for  Theorem~\ref{thm-long1}} \label{sec-prelim-long1}
\subsubsection{Sobolev realizations of the $\dbar$-operator}\label{sec-realizationdef}
Now we construct a long exact sequence associated to an annulus which relates the function theory of the annulus with that of its hole and its envelope.
An immediate consequence of our result is Corollary~\ref{cor-long-c} of the introduction, and in particular the very important exact sequence \eqref{eq-long-hole-min},
which encompasses many of the results about  $L^2$-estimates on annuli as found in \cite{lishaw, FLS} and earlier works cited there.

Let $D$ be a  domain in $\cx^n$.  By the  \emph{ domain of a
	$W^s$-realization $\db$ of the $\dbar$-operator on $D$}, we mean a collection of linear subspaces $A^{p,q}_{\db,W^s}(D)\subset W^s_{p.q}(\ol{D})$, where $0\leq p, q \leq n$, such that

\begin{enumerate}
	\item $ A^{p,q}_{c,W^s}(D)\subset A^{p,q}_{\db,W^s}(D) \subset A^{p,q}_{W^s}(D).$
	\item  if $f\in A^{p,q}_{\db,W^s}(D)$ then $\dbar f \in A^{p,q+1}_{\db,W^s}(D)$, with the derivative taken in the sense of distributions.
	\item for each $\phi\in \mathcal{C}^\infty(\ol{D})$, if $f\in A^{p,q}_{\db,W^s}(D)$ then $\phi f \in A^{p,q}_{\db,W^s}(D)$.
\end{enumerate}
Then the $\dbar$-operator acting on  $A^{p,q}_{\db,W^s}(D)$ in the sense of distributions is said to be a $W^s$-realization of
$\dbar$. It is clear that the three realizations $A^{p,*}_{W^s}(D), A^{p,*}_{c,W^s}(D)$  and $A^{p,*}_{\mathrm{ext},W^s}(D)$ of Section~\ref{sec-three-realizations}
satisfy the conditions above. As in section~\ref{sec-cochains}, we obtain a cochain sequence associated to the realization, and a corresponding cohomology group $H^{p,q}_{\db,W^s}(D)$.

\subsubsection{Mixed realizations}\label{sec-mixed}

Let $\db$ be a $W^s$-realization of $\dbar$ on the envelope $\Omega_1$ of the annulus $\Omega=\Omega_1\setminus\ol{\Omega_2}$.
We define a \emph{mixed realization} on $\Omega$ which coincides with $\db$ along $b\Omega_1$ and  with the minimal $W^s$-realization along $b\Omega_2$ in the following way.
Let $A^{p,q}_{(\db,c),W^s}(\Omega)$ consist of all $(p,q)$-currents $u$ on $\Omega$ of the form
\begin{equation}\label{eq-urep}
u=f|_\Omega + h,
\end{equation}
where
\[ \begin{cases} f\in A^{p,q}_{\db,W^s}(\Omega_1), f\equiv 0 \text{ in a neighborhood of } \ol{\Omega_2},\\
%g \in A^{p,q}_{\db_2,W^s}(\Omega), g \equiv 0 \text{ near } b\Omega_1\\
h\in A^{p,q}_{c, W^s}(\Omega).
\end{cases}\]
It is easily verified that the three conditions in section~\ref{sec-realizationdef} are  satisfied by $A^{p,q}_{(\db,c),W^s}(\Omega)$. Further, if
$f\in A^{p,q}_{(\db,c),W^s}(\Omega)$  then (i) if $\phi\in \mathcal{C}^\infty(\ol{\Omega})$ is such that $\phi$ vanishes near $b\Omega_2$, then the product $\phi f\in A^{p,q}_{\db,W^s}(\Omega_1)$, where $\phi f$ is assumed to be extended by zero in $\Omega_2$
and (ii) if $\psi \in \mathcal{C}^\infty(\ol{\Omega})$ is such that $\psi$ vanishes near $b\Omega_1$, then ${\psi} f\in A^{p,q}_{c,W^s}(\Omega) .$

Let
\[  \eh: A^{p,*}_{(\db,c),W^s}(\Omega)\to A^{p,*}_{\db,W^s}(\Omega_1)\]
be the \emph{zero-extension} operator defined in the following way. For  $u\in A^{p,q}_{(\db,c),W^s}(\Omega)$ represented as in \eqref{eq-urep},
we let
\begin{equation}\label{eq-ehdef}
\eh(u) = f + (\zs h)|_{\Omega_1},
\end{equation}
where $\zs h\in A^{p,q}_{W^s}(\cx^n)$ is the zero extension of $h$ to $\cx^n$ (see \eqref{eq-zeroext}). It is not difficult to see that $\eh$ is defined independently of the representation \eqref{eq-urep}, and $\eh$ is a
continuous cochain map.

%%%%%%%%%%%%%%%%%%%%%%%%%%%%%%%%%%%

\subsubsection{Definitions of maps.}\label{sec-holerest}
Suppose  that $\Omega=\Omega_1\setminus \ol{\Omega}_2$ is a bounded  annulus in which the hole $\Omega_2$ is Lipschitz, and suppose further
that we are given a $W^s$-realization $\db$ of the $\dbar$-operator $ A^{p,*}_{\db,W^s}(\Omega)$ .
Let $ A^{p,q}_{(\db,c),W^s}(\Omega)$ be the mixed realization  on $\Omega$ which coincides with the given realization $\db$ along $b\Omega_1$ and with the  	minimal realization along $b\Omega_2$ .
We define some maps:

\begin{enumerate}[wide, labelwidth=!, labelindent=0pt]
	\item From \eqref{eq-ehdef}, we obtain an induced map at the level of cohomology:
	\[ (\eh_*)^{p,q}: H^{p,q}_{(\db,c),W^s}(\Omega)\to H^{p,q}_{\db,W^s}(\Omega_1),\]
	which is continuous, since $\eh$ is (see Proposition~\ref{prop-ind-cont})
	
	\item Let $U$ be a bounded pseudoconvex domain containing $\ol{\Omega}_2$,
	and let $N$ denote the $\dbar$-Neumann operator of $U$. Define the \emph{modified restriction map}
	\[ S^{p,q}: H^{p,q}_{\db,W^s}(\Omega_1)\to H^{p,q}_{W^{s+1}}(\Omega_2)\]
	which is given for
	$g\in Z^{p,q}_{\db,W^s}(\Omega_1)$  by
	\begin{equation}\label{eq-sexpression}
	S^{p,q}\left([g]_{H^{p,q}_{\db,W^s}(\Omega_1)}\right)		=  \left[\left. \left(\dbar^* N \dbar (\wt{\chi\cdot g})\right)\right\vert_{\Omega_2} \right]_{H^{p,q}_{W^{s+1}}(\Omega_2)}
	\end{equation}
	where $\chi\in \mathcal{C}^\infty_0(\Omega_1)$ is a compactly supported smooth function such that $\chi\equiv 1$ in a neighborhood of $\ol{\Omega}_2$,
	and $\wt{\chi\cdot  g}$ is the form on $U$ obtained by extending the compactly supported form $\chi\cdot g$ by zero on $U\setminus \Omega_1$, if this set is nonempty.  {Note that $\dbar(\wt{\chi\cdot g})=\wt{\dbar\chi\wedge g}\in W^s_{p,q+1}(U)$, so the interior regularity of the $\dbar$-Neumann problem will guarantee that $\left. \left(\dbar^* N \dbar (\wt{\chi\cdot g})\right)\right\vert_{\Omega_2}\in W^{s+1}_{p,q}(\Omega_2)$}.
	It will follow
	from the proof of Theorem~\ref{thm-long1} below that $S^{p,q}$ is  well-defined and the definition \eqref{eq-sexpression} is independent of the choice of the pseudoconvex neighborhood
	$U$ of the hole $\ol{\Omega}_2$ and the cutoff $\chi$.
	
	\item
	
	We  introduce a \emph{modified connecting homomorphism}
	\[ \ell^{p,q}: H^{p,q}_{W^{s+1}}(\Omega_2)\to H^{p,q+1}_{(\db,c), W^s}(\Omega)\]
	which is given  for
	$f\in Z^{p,q}_{W^{s+1}}(\Omega_2)$ by
	\begin{equation}\label{eq-lpqdef}
	\ell^{p,q}\left(\cls{f}{H^{p,q}_{W^{s+1}}(\Omega_2)}\right)=\cls{\dbar(\chi\cdot Ef)}{H^{p,q+1}_{(\db,c),W^s}(\Omega)},
	\end{equation}
	where $\chi\in \mathcal{D}(\Omega_1)$ is such that $\chi\equiv 1$ in a neighborhood of $\Omega_2$, and $E:W^{s+1}(\Omega_2)\to W^{s+1}(\cx^n)$ is the extension operator acting coefficientwise on
	forms.
\end{enumerate}

%	
%	 It is easy to see that
%the maps
%\[ \rhoh: (A^{p,*}_{\db,W^s}(\Omega_1), \dbar)\to (A^{p,q}_{\mathrm{ext},W^s}(\Omega_2),\dbar)\]
%and
%\[  \eh: (A^{p,*}_{(\db,c),W^s}(\Omega),\dbar)\to (A^{p,*}_{\db,W^s}(\Omega_1),\dbar)\]
%are continuous cochain morphisms of cochain complexes of inner-product spaces. Indeed, they are cochain morphisms since the differential operator is the same in all
%of them, namely, the $\dbar$-operator acting in the sense of distributions on currents. The map is continuous, as can be seen from the fact that all these are inner-product spaces
%in the same inner product which generates the graph norm.
%
%To write down the first of our relations between cohomologies of the annulus, hole and envelope, we introduce some maps.

\subsection{A long exact sequence associated to annuli}\label{sec-long1}

Now we can state and prove the second main result of this paper, using the notions introduced in the preceding Section~\ref{sec-prelim-long1}.

\begin{thm} \label{thm-long1} Let $\Omega=\Omega_1\setminus \ol{\Omega}_2\subset \cx^n$  be an annulus.
	Let $\db$ be a realization of the $\dbar$-operator on $W^s_{p,*}(\Omega_1)$, where $s$ is an integer, and $0\leq p\leq n$.
	With notation introduced as above, the following sequence of semi-inner-product spaces  and continuous linear maps is exact:
	\begin{equation}\label{eq-long-hole} \cdots \xrightarrow{{S}^{p,q-1}} H^{p,q-1}_{W^{s+1}}(\Omega_2)
	\xrightarrow{\ell^{p,q-1}}H^{p,q}_{(\db,c),W^s}(\Omega) \xrightarrow{(\eh_*)^{p,q}}H^{p,q}_{\db,W^s}(\Omega_1) \xrightarrow{{S}^{p,q}}H^{p,q}_{W^{s+1}}(\Omega_2)
	\xrightarrow{\ell^{p,q}}\cdots\end{equation}
\end{thm}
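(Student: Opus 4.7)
The plan is to mirror the proof of Theorem~\ref{thm-short}: construct a short exact sequence of cochain complexes, apply the zig-zag lemma, and transport the hole's cohomology via the isomorphism $(i_*^{\Omega_2})^{p,q}\colon H^{p,q}_{W^{s+1}}(\Omega_2)\to H^{p,q}_{\mathrm{ext},W^s}(\Omega_2)$ of Theorem~\ref{thm-istar}. With $\rhoh(F):=F|_{\Omega_2}$ the restriction to the hole, the candidate sequence is
\[
0 \to A^{p,*}_{(\db,c),W^s}(\Omega) \xrightarrow{\eh} A^{p,*}_{\db,W^s}(\Omega_1) \xrightarrow{\rhoh} A^{p,*}_{\mathrm{ext},W^s}(\Omega_2) \to 0.
\]
Fix a cutoff $\chi\in\mathcal{D}(\Omega_1)$ identically one near $\overline{\Omega_2}$. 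The map $\rhoh$ lands in the extendable realization because for $F\in A^{p,q}_{\db,W^s}(\Omega_1)$ the product $\chi F$ lies in $A^{p,q}_{c,W^s}(\Omega_1)$, and its zero extension $\zs(\chi F)\in A^{p,q}_{W^s}(\cx^n)$ extends $F|_{\Omega_2}$. Surjectivity: given $g\in A^{p,q}_{\mathrm{ext},W^s}(\Omega_2)$ with global extension $G\in A^{p,q}_{W^s}(\cx^n)$, the form $\chi G|_{\Omega_1}\in A^{p,q}_{c,W^s}(\Omega_1)\subset A^{p,q}_{\db,W^s}(\Omega_1)$ restricts to $g$.

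For exactness at the middle term, $\rhoh\circ\eh=0$ is immediate from \eqref{eq-ehdef}. Conversely, given $F\in A^{p,q}_{\db,W^s}(\Omega_1)$ with $F|_{\Omega_2}=0$, decompose $F=(1-\chi)F+\chi F$. The first summand lies in $A^{p,q}_{\db,W^s}(\Omega_1)$ and vanishes near $\overline{\Omega_2}$; for the second, the hypothesis $F|_{\Omega_2}=0$ makes $\chi F$ vanish on $\Omega_2$ (and, by the support condition on $\chi$, outside $\Omega_1$), so $h:=\chi F|_\Omega\in A^{p,q}_{c,W^s}(\Omega)$ with $\zs h|_{\Omega_1}=\chi F$. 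Hence $F=\eh((1-\chi)F|_\Omega+h)$. Applying the zig-zag lemma \cite[Chapter~XX, Theorem~2.1]{lang} produces a long exact sequence in cohomology; composing on the right with $(i_*^{\Omega_2})^{-1}$ to replace $(\rhoh_*)^{p,q}$, and on the left with $i_*^{\Omega_2}$ to replace the connecting homomorphism, yields a sequence of the shape \eqref{eq-long-hole}.

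It remains to verify that these compositions agree with the explicit formulas for $S^{p,q}$ and $\ell^{p,q}$ from Section~\ref{sec-holerest}. For $S^{p,q}$: the form $\wt{\chi\cdot g}$ appearing in \eqref{eq-sexpression} is, by the construction above, a compactly supported extension of $g|_{\Omega_2}$ inside $A^{p,q}_{c,W^s}(U)$, so formula \eqref{eq-istarinv} for $(i_*^{\Omega_2})^{-1}$ evaluated on this particular extension reproduces \eqref{eq-sexpression} verbatim. This simultaneously establishes well-definedness of $S^{p,q}$ (independence of $U$ and $\chi$), since the composition $(i_*^{\Omega_2})^{-1}\circ(\rhoh_*)^{p,q}$ is intrinsic. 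For $\ell^{p,q}$: given $f\in Z^{p,q}_{W^{s+1}}(\Omega_2)$, the lift $\wt{f}:=(\chi Ef)|_{\Omega_1}$ lies in $A^{p,q}_{c,W^s}(\Omega_1)\subset A^{p,q}_{\db,W^s}(\Omega_1)$ and restricts to $f$ on $\Omega_2$; the connecting homomorphism then sends $[f]_{\mathrm{ext}}$ to the class of $\dbar\wt{f}|_\Omega=\dbar(\chi Ef)|_\Omega$ in $H^{p,q+1}_{(\db,c),W^s}(\Omega)$, matching \eqref{eq-lpqdef}.

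The main obstacle is the continuity of $S^{p,q}$, since the inverse of $i_*^{\Omega_2}$ is not known to be continuous in general. Continuity is nevertheless visible directly from \eqref{eq-sexpression}: the datum $\dbar\wt{\chi g}$ is supported in $\mathrm{supp}(\dbar\chi)\Subset\Omega_1\setminus\overline{\Omega_2}$, so interior regularity of the $\dbar$-Neumann operator on $U$ (extended to negative Sobolev index by distributional duality as in the remark following Theorem~\ref{thm-istar}) furnishes a continuous $W^{s+1}$-estimate for $(\dbar^*N\dbar\wt{\chi g})|_{\Omega_2}$ in terms of the $W^s$-norm of $g$. Continuity of $\ell^{p,q}$ is transparent from \eqref{eq-lpqdef}, and continuity of $(\eh_*)^{p,q}$ follows from Proposition~\ref{prop-ind-cont} applied to the continuous cochain morphism $\eh$.
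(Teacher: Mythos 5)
Your proposal is correct and follows essentially the same route as the paper: the same short exact sequence $0\to A^{p,*}_{(\db,c),W^s}(\Omega)\to A^{p,*}_{\db,W^s}(\Omega_1)\to A^{p,*}_{\mathrm{ext},W^s}(\Omega_2)\to 0$, the zig-zag lemma, the identifications $S^{p,q}=(i_*^{p,q})^{-1}\circ(\rhoh_*)^{p,q}$ and $\ell^{p,q}=c_{\mathrm{H}}^{p,q}\circ i_*^{p,q}$ via Theorem~\ref{thm-istar}, and the direct verification of continuity of $S^{p,q}$ from formula \eqref{eq-sexpression} via interior regularity rather than from the (possibly discontinuous) inverse of $i_*$. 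Your explicit decomposition $F=(1-\chi)F+\chi F$ for exactness at the middle term is a slightly more detailed version of the paper's one-line identification of $\ker\rhoh$ with $\img\eh$, but it is the same argument.
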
			

\subsubsection{Step 1 of proof: From short to long exact sequence}

Define  for $f$ a current on
$\Omega_1$:
\[ \rhoh(f) =f|_{\Omega_2}\quad \text{(restriction to the hole)}.\]
We claim that the short exact sequence  of inner-product spaces and continuous cochain maps
\begin{equation}\label{eq-sesq2}
0\to A^{p,*}_{(\db,c),W^s}(\Omega)\xrightarrow{\eh} A^{p,*}_{\db,W^s}(\Omega_1)\xrightarrow{\rhoh}A^{p,*}_{\mathrm{ext}, W^s}(\Omega_2)\to 0.
\end{equation}
is exact. It is clear from the definition that $\eh$ is an injective continuous cochain morphism.
By definition $\rhoh$ is a continuous mapping of inner-product spaces.  Part~7 of Proposition~\ref{prop-maxrealproperties} shows that the map
$\rhoh: A^{p,*}_{c, W^s}(\Omega_1)\to A^{p,*}_{\mathrm{ext}, W^s}(\Omega_2)$ is surjective. Since by hypothesis, $A^{p,*}_{c, W^s}(\Omega_1) \subset A^{p,*}_{\db,W^s}(\Omega_1)$
it follows that $\rhoh$ is surjective onto  $ A^{p,*}_{\mathrm{ext}, W^s}(\Omega_2)$.
To see exactness at $A^{p,*}_{\db,W^s}(\Omega_1)$ we simply note that
\[ \ker \rhoh =\{ f\in A^{p,*}_{\db, W^s}(\Omega_1)\mid f|_{\Omega_2}=0\}= \img \eh.\]

Now, again using  a well-known result  in algebra (see, e.g.,  \cite[Chapter XX, Theorem 2.1]{lang}),  the short exact sequence \eqref{eq-sesq2} gives rise to a long exact sequence of semi-inner-product spaces and linear maps:
\begin{equation}\label{eq-long1}
\cdots \xrightarrow{(\rhoh_*)^{p,q-1}} H^{p,q-1}_{\mathrm{ext}, W^s}(\Omega_2)
\xrightarrow{c_\mathrm{H}^{p,q-1}}H^{p,q}_{(\db,c),W^s}(\Omega) \xrightarrow{(\eh_*)^{p,q}}H^{p,q}_{\db,W^s}(\Omega_1) \xrightarrow{(\rhoh_*)^{p,q}}H^{p,q}_{\mathrm{ext},W^s}(\Omega_2)
\xrightarrow{c_\mathrm{H}^{p,q}}\cdots\end{equation}
where $\rhoh_*$ and $\eh_*$ are the maps induced
on the cohomology by the maps $\rhoh$ and $\eh$, and therefore are continuous by Proposition~\ref{prop-ind-cont}, and $c_\mathrm{H}$  is the ``connecting
homomorphism", a linear mapping
\[ c_\mathrm{H}^{p,q}: H^{p,q}_{\mathrm{ext}, W^s}(\Omega_2)\to H^{p,q+1}_{(\db, c)}(\Omega)\]
defined by the formula (with  $f\in Z^{p,q}_{\mathrm{ext},W^s}(\Omega_2) $),
\begin{equation}\label{eq-cdef} c_\mathrm{H}^{p,q}\left([f]_{H^{p,q}_{\mathrm{ext},W^s}(\Omega_2)}\right)=  \left[\left.{\dbar g}\right\vert_\Omega \right]_{H^{p,q+1}_{(\db,c)}(\Omega)},  \end{equation}
where  the element $g\in {A^{p,q}_{\db,W^s}}(\Omega_1)$ is chosen such that ${\rhoh(g)}= f.$
Further,  $c_\mathrm{H}$ is well-defined independently of the choice of the ``lift" $g$ of $f$.

Recall that the map
$i_*: H^{p,q}_{W^{s+1}}(\Omega_2)\to H^{p,q}_{\mathrm{ext},W^s}(\Omega_2)$  of \eqref{eq-istar}
is a continuous isomorphism of semi-inner-product spaces,
and is in fact the map at the cohomology level induced by
the inclusion map of cochain complexes $i:   A^{p,q}_{W^{s+1}}(\Omega_2) \hookrightarrow A^{p,q}_{\mathrm{ext}, W^s}(\Omega_2)
$. We have the following representations of the maps
$\ell^{p,q}$ and $S^{p,q}$ introduced in \eqref{eq-lpqdef} and \eqref{eq-sexpression} above.

\begin{lem} \label{lem-lsrep} We have
	\begin{equation}
	\label{eq-lrep}
	\ell^{p,q}= c_\mathrm{H}^{p,q}\circ i_*^{p,q}\end{equation}
	and
	\begin{equation}
	\label{eq-Srep}
	S^{p,q}= (i_*^{p,q})^{-1}\circ (\rhoh_*)^{p,q}.\end{equation}

\end{lem}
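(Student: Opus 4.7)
The plan is to verify both identities by direct unravelling of definitions, leaning on Theorem~\ref{thm-istar} to supply the explicit formula for $(i_*^{p,q})^{-1}$.

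For the identity $\ell^{p,q}=c_\mathrm{H}^{p,q}\circ i_*^{p,q}$, I would take a representative $f\in Z^{p,q}_{W^{s+1}}(\Omega_2)$. The image $i_*^{p,q}[f]_{H^{p,q}_{W^{s+1}}(\Omega_2)}$ is just the class $[f]_{H^{p,q}_{\mathrm{ext},W^s}(\Omega_2)}$ of the same form in the extendable cohomology. To compute $c_\mathrm{H}^{p,q}$ applied to this, formula \eqref{eq-cdef} requires a lift $g\in A^{p,q}_{\db,W^s}(\Omega_1)$ with $\rhoh(g)=f$. I would take $g=\chi\cdot Ef$, the exact object used in \eqref{eq-lpqdef}: here $\chi\in\mathcal{D}(\Omega_1)$ equals $1$ near $\ol{\Omega_2}$ and $Ef\in W^{s+1}_{p,q}(\cx^n)$. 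Since $\chi\cdot Ef$ has compact support in $\Omega_1$ and coefficients in $W^{s+1}(\cx^n)\subset W^s(\cx^n)$, it belongs to $A^{p,q}_{c,W^s}(\Omega_1)\subset A^{p,q}_{\db,W^s}(\Omega_1)$ by the first axiom in Section~\ref{sec-realizationdef}; and $g|_{\Omega_2}=f$ because $\chi\equiv 1$ there. Substituting into \eqref{eq-cdef} gives $c_\mathrm{H}^{p,q}(i_*^{p,q}[f])=[(\dbar g)|_\Omega]_{H^{p,q+1}_{(\db,c)}(\Omega)}=[\dbar(\chi\cdot Ef)|_\Omega]_{H^{p,q+1}_{(\db,c),W^s}(\Omega)}$, which matches \eqref{eq-lpqdef}.

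For the identity $S^{p,q}=(i_*^{p,q})^{-1}\circ(\rhoh_*)^{p,q}$, I would start with a representative $g\in Z^{p,q}_{\db,W^s}(\Omega_1)$. Applying $(\rhoh_*)^{p,q}$ yields the class $[g|_{\Omega_2}]$ in $H^{p,q}_{\mathrm{ext},W^s}(\Omega_2)$. The inverse formula \eqref{eq-istarinv} of Theorem~\ref{thm-istar}, applied with the pseudoconvex neighborhood $U\supset\ol{\Omega_2}$ appearing in the definition of $S^{p,q}$, says
\[
(i_*^{p,q})^{-1}\bigl[g|_{\Omega_2}\bigr]_{H^{p,q}_{\mathrm{ext},W^s}(\Omega_2)} =\bigl[(\dbar^*N\dbar \wt{f})|_{\Omega_2}\bigr]_{H^{p,q}_{W^{s+1}}(\Omega_2)}
\]
for \emph{any} extension $\wt{f}\in A^{p,q}_{c,W^s}(U)$ of $g|_{\Omega_2}$. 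Choosing $\chi$ (as allowed) with $\supp\chi\subset U\cap\Omega_1$, I would take $\wt{f}=\wt{\chi\cdot g}$, the zero-extension from Section~\ref{sec-holerest}. Using axiom (3) of a realization, $\chi\cdot g\in A^{p,q}_{\db,W^s}(\Omega_1)$; the compact support of $\chi$ inside $U\cap\Omega_1$ then places $\wt{\chi\cdot g}$ in $A^{p,q}_{c,W^s}(U)$, and $\chi\equiv 1$ near $\ol{\Omega_2}$ makes $\wt{\chi\cdot g}|_{\Omega_2}=g|_{\Omega_2}$. Substituting into \eqref{eq-istarinv} reproduces precisely the right-hand side of \eqref{eq-sexpression}, giving the desired equality. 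As a bonus, since $(\rhoh_*)^{p,q}$ and $(i_*^{p,q})^{-1}$ are each well-defined on cohomology, this representation also confirms the parenthetical claim made after \eqref{eq-sexpression} that $S^{p,q}$ is independent of the choice of $U$ and $\chi$.

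I do not foresee a serious obstacle: both identities reduce, via Theorem~\ref{thm-istar} and formula \eqref{eq-cdef}, to checking that the cutoff-and-extend recipes in \eqref{eq-lpqdef} and \eqref{eq-sexpression} really produce valid lifts/extensions in the right realizations. The only delicate bookkeeping is choosing $\chi$ in the second identity so that $\supp\chi\subset U\cap\Omega_1$, so that $\wt{\chi\cdot g}$ genuinely lies in $A^{p,q}_{c,W^s}(U)$; this is always possible because $\ol{\Omega_2}\subset U\cap\Omega_1$ is open.
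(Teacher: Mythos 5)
Your proof is correct and takes essentially the same route as the paper's: unwind the definitions of $c_\mathrm{H}^{p,q}$ and $(i_*^{p,q})^{-1}$ via \eqref{eq-cdef} and \eqref{eq-istarinv}, and observe that $\chi\cdot Ef$ and $\wt{\chi\cdot g}$ serve as the required lift and compactly supported extension, respectively. Your explicit remark that $\chi$ should be chosen with $\supp\chi\subset U\cap\Omega_1$ so that $\wt{\chi\cdot g}$ genuinely lies in $A^{p,q}_{c,W^s}(U)$ is a small point the paper leaves implicit, and is a worthwhile clarification.
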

\begin{proof}
	Let $f\in Z^{p,q}_{W^{s+1}}(\Omega_2)$.  Then we have
	\begin{align*}
	(c^{p,q}_\mathrm{H}\circ  i_*)\left( [f]_{H^{p,q}_{W^{s+1}}(\Omega_2)}\right)  &= c^{p,q}_\mathrm{H}\left( i_*\left([f]_{H^{p,q}_{W^{s+1}}(\Omega_2)}\right)\right)\\&=c^{p,q}_\mathrm{H}\left( [f]_{H^{p,q}_{\mathrm{ext},W^s}(\Omega_2)}\right)=\left[{\left.\dbar g\right\vert_\Omega}  \right]_{H^{p,q+1}_{(\db,c)}(\Omega)}=\ell^{p,q}\left( [f]_{H^{p,q}_{W^{s+1}}(\Omega_2)}\right) ,
	\end{align*}
	where $g$ is an element of ${A^{p,q}_{\db,W^s}}(\Omega_1)$ which is mapped by
	${\rhoh}$ onto $f$. Since $f\in W^{s+1}_{p,q}(\Omega_2)$, we can take $g=\chi \cdot Ef$ with $\chi$ and $E$ as in \eqref{eq-lpqdef}.
	
	Now let $g\in Z^{p,q}_{\db, W^s}(\Omega_1)$. Then we have
	\begin{align*}
	\left((i_*^{p,q})^{-1}\circ (\rhoh_*)^{p,q}\right)\left([g]_{H^{p,q}_{\db, W^s}(\Omega_1)}\right)&=	 (i_*^{p,q})^{-1} \left(\left[\left.g\right\vert_{\Omega_2}\right]_{H^{p.q}_{\mathrm{ext},W^s}(\Omega_2)}\right)\\
	&=\left[\left.\left(\dbar^* N \dbar h\right)\right\vert_{\Omega_2} \right]_{H^{p,q}_{W^{s+1}}(\Omega_2)},
	\end{align*}
	where in the last line we have used the representation \eqref{eq-istarinv} of $(i_*^{p,q})^{-1}$. Here $N$ is the $\dbar$-Neumann operator
	of a bounded pseudoconvex neighborhood $U$ of $\ol{\Omega}_2$, and $h$ is an extension of $\left.g\right\vert_{\Omega_2}$ as an element of
	$A^{p,q}_{c,W^s}(U)$. For the cutoff $\chi$ in \eqref{eq-sexpression},  which has support in $\Omega_1$ and is 1 near $\ol{\Omega}_2$
	we can take $h= \wt{\chi\cdot g}$, thus establishing \eqref{eq-Srep}.
\end{proof}
\subsubsection{End of the proof of Theorem~\ref{thm-long1}}
The representation \eqref{eq-lrep} of the map $\ell^{p,q}$ shows that the definition  \eqref{eq-lpqdef} does not depend on the choice of the  cutoff $\chi$.
Since  the map
from $ Z^{p,q}_{W^{s+1}}(\Omega_2)$ to $H^{p,q+1}_{(\db,c),W^s}(\Omega)$
given by $
f\mapsto  \cls{\left.\dbar(\chi\cdot Ef)\right\vert_\Omega}{H^{p,q+1}_{(\db,c)}(\Omega)}
$
is continuous,  it follows by the universal property of the quotient topology (see diagram~\ref{diag-quotient}) that the induced map 	\[
\ell^{p,q}\left(\cls{f}{H^{p,q}_{W^{s+1}}(\Omega_2)}\right)=\cls{\dbar(\chi\cdot Ef)|_\Omega}{H^{p,q+1}_{(\db,c)}(\Omega)}
\] is  also continuous.

The representation \eqref{eq-Srep}  shows that $S^{p,q}$ is defined independently of the choice of the pseudoconvex open set $U$ and the cutoff $\chi$.
To see continuity of $S^{p,q}$ notice that the map
from $ Z^{p,q}_{\db,W^s}(\Omega_1)$  to $H^{p,q}_{W^{s+1}}(\Omega_2)$
given by
\[	g\mapsto  \left[\left. \left(\dbar^* N \dbar (\wt{\chi\cdot g})\right)\right\vert_{\Omega_2} \right]_{H^{p,q}_{W^{s+1}}(\Omega_2)}\]
is easily seen to be continuous (using the interior regularity of the canonical solution operator $\dbar^* N$), and therefore, the map $S^{p,q}$ induced
by this map is also continuous, again by an appeal to the universal property of the quotient in diagram~\ref{diag-quotient}.

To complete the proof of Theorem~\ref{thm-long1}, we now see  from Lemma~\ref{lem-lsrep} that each of the two triangles in the following diagram commutes:

\[
\begin{tikzcd}
H^{p,q}_{\db,W^s}(\Omega_1) \arrow{r}{(\rhoh_*)^{p,q}}\arrow[swap]{dr}{S^{p,q}} & H^{p,q}_{\mathrm{ext},W^s}(\Omega_2)\arrow{r}{c^{p,q}_\mathrm{H}} &H^{p,q+1}_{(\db,c),W^s}(\Omega) \\
& H^{p,q}_{W^{s+1}}(\Omega_2)\arrow{u}{i^{p,q}_*}\arrow[swap]{ur}{\ell^{p,q}}&
\end{tikzcd}
\]

This combined with \eqref{eq-long1} shows that the sequence \eqref{eq-long-hole} is exact. This completes the proof of Theorem~\ref{thm-long1}.

\subsection{Proof of Corollary~\ref{cor-long-c}}\label{sec-longcor}
Using Theorem~\ref{thm-long1} it is easy to complete the proof of the corollary stated in the introduction, which follows on letting  the realization $\db$ on $\Omega_1$ in Theorem~\ref{thm-long1} be the minimal Sobolev realization with domain $A^{p,q}_{c,W^s}(\Omega_1)$, and noting that the resulting mixed realization in the annulus is the minimal realization with domain $A^{p,q}_{c,W^s}(\Omega)$.
\section{Applications to function theory on annuli}\label{sec-applications}

\subsection{Duality for Sobolev cohomology}

As a first preliminary to applying Theorems~\ref{thm-short} and \ref{thm-long1} to concrete questions, we discuss generalization to Sobolev spaces of well-known duality phenomena for
the $L^2$-Dolbeault cohomology (see \cite{serre}).

Given Banach spaces $X$ and $Y$, we call a continuous bilinear map $\beta:X\times Y\rightarrow\mathbb{C}$ \emph{perfect} if for each continuous functional $\phi\in X^*$ there exists a unique $y\in Y$ such that $\phi(x)=\beta(x,y)$ and for each continuous linear functional $\psi\in Y^*$ there exists a unique $x\in X$ such that $\psi(y)=\beta(x,y)$.  Note that this bilinear map identifies $X^*$ with $Y$ and $Y^*$ with $X$, which implies that $X$ and $Y$ are reflexive.

Let $D$ be a bounded domain in $\cx^n$, and let $0\leq p,q \leq n$. Consider the natural bilinear map
\[ \mathcal{C}^\infty_{p,q}(\ol{D})\times \mathcal{D}^{n-p,n-q}(D)\to\cx,\]
given by
\begin{equation}
\label{eq-integral-pairing}	f,g \mapsto \int_D f\wedge g.
\end{equation}
Let $s$ be an integer. Since $\mathcal{C}^\infty_{p,q}(\ol{D})$ is dense in $W^s_{p,q}(\ol{D})$ and $\mathcal{D}^{n-p,n-q}(D)$ is dense in $(W^{-s}_0)_{p,q}(\ol{D})$ (see part (2) of Proposition~\ref{prop-maxrealproperties}), it follows that the bilinear map \eqref{eq-integral-pairing} extends to a separately continuous bilinear map
\begin{equation}
\label{eq-sobolev-pairing} W^s_{p,q}(\ol{D})\times (W^{-s}_0)_{n-p,n-q}(\ol{D}) \to \cx,
\end{equation}
which in fact is continuous by a standard application of the uniform boundedness principle. We continue to denote this  pairing of Hilbert spaces of currents by
the integral notation \eqref{eq-integral-pairing}. By construction this pairing is perfect.

We can think of the $W^s$-Sobolev realization of the $\dbar$-operator with domain $A^{p,q}_{W^s}(D)$ as a densely-defined closed operator
\begin{equation}
\label{eq-dbar1} \dbar_{W^s}:W^s_{p,q-1}(\ol{D})\dashrightarrow W^s_{p,q}(\ol{D}).
\end{equation}

The key to the duality theory in the $\dbar$ problem is the following:
\begin{prop}
	Under the identification of dual spaces given by \eqref{eq-integral-pairing}, the transpose of the operator $\dbar_{W^s}$ of \eqref{eq-dbar1} is  the unbounded operator
	\[(-1)^{p+q}\dbar_{c, W^{-s}}: (W^s_0)_{n-p, n-q-1}(\ol{D}) \dashrightarrow  (W^s_0)_{n-p, n-q}(\ol{D})\]
	with domain $  A^{n-p,n-q-1}_{c, W^{-s}}(D)$.
\end{prop}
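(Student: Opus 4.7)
The plan is to identify the transpose of the densely-defined closed operator $\dbar_{W^s}$ in three steps: establish the formal integration-by-parts identity on smooth forms, obtain the forward inclusion $A^{n-p,n-q-1}_{c,W^{-s}}(D)\subset \dom(\dbar_{W^s}^t)$ by density, and obtain the reverse inclusion by lifting the duality identity to a distributional identity on all of $\cx^n$. I would first verify the sign-laden Stokes identity for $u\in \mathcal{D}^{p,q-1}(D)$ and $\phi\in \mathcal{D}^{n-p,n-q}(D)$: the Leibniz rule $\dbar(u\wedge\phi)=\dbar u\wedge\phi+(-1)^{p+q-1}u\wedge\dbar\phi$, combined with Stokes' theorem applied to the top form $u\wedge\phi$ of compact support in $D$ (noting that $\partial(u\wedge\phi)=0$ for bidegree reasons, so $\dbar(u\wedge\phi)=d(u\wedge\phi)$), gives
\[\int_D \dbar u\wedge \phi = (-1)^{p+q}\int_D u\wedge \dbar\phi,\]
which pins down the sign $(-1)^{p+q}$ appearing in the statement.

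For the forward inclusion I would take $\phi\in A^{n-p,n-q-1}_{c,W^{-s}}(D)$ and approximate it by compactly supported smooth forms in the graph norm via Part~(\ref{part-4}) of Proposition~\ref{prop-maxrealproperties}, while simultaneously approximating any $u\in A^{p,q-1}_{W^s}(D)$ by $\mathcal{C}^\infty_{p,q-1}(\ol D)$ forms in the graph norm via Part~(\ref{part-3}). Passing to the limit using the continuity of the perfect pairing \eqref{eq-sobolev-pairing} shows that $u\mapsto \int_D \dbar u\wedge \phi$ extends continuously to $W^s_{p,q-1}(\ol D)$ and equals $(-1)^{p+q}\int_D u\wedge \dbar\phi$, placing $\phi$ in $\dom(\dbar_{W^s}^t)$ with transpose value $(-1)^{p+q}\dbar \phi$. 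Conversely, if $\phi\in\dom(\dbar_{W^s}^t)$ with transpose $\psi$, I would test the defining identity against $u=U|_D$ for arbitrary $U\in W^s_{p,q-1}(\cx^n)$ (which automatically lies in $A^{p,q-1}_{W^s}(D)$ because $\dbar u=(\dbar U)|_D$), and rewrite both sides as integrals over $\cx^n$ via the zero extensions $\zs\phi,\zs\psi\in W^{-s}(\cx^n)$. Density of smooth compactly supported forms in $W^s_{p,q-1}(\cx^n)$ then upgrades the pairing identity to the distributional equality $\dbar(\zs\phi)=(-1)^{p+q}\zs\psi$ on $\cx^n$; since both sides have $W^{-s}$ coefficients supported in $\ol D$, $\phi$ is exhibited as an element of $A^{n-p,n-q-1}_{c,W^{-s}}(D)$.

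The main obstacle is the distributional bookkeeping in the last step: one needs restrictions of arbitrary $W^s$-forms on $\cx^n$ to exhaust the maximal domain $A^{p,q-1}_{W^s}(D)$, together with the identification of the dual of $W^s(\ol D)$ with $W^{-s}_0(\ol D)$ supplied by the extension operator $E$, in order to reinterpret the transpose identity over $D$ as a global distributional identity on $\cx^n$. Once these are in place the sign $(-1)^{p+q}$ propagates consistently and both inclusions match, completing the identification of the transpose.
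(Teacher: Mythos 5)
Your outline follows the paper's own proof in all essentials: the sign comes from the same Stokes/Leibniz computation on smooth forms, the forward inclusion uses the density statements of Proposition~\ref{prop-maxrealproperties} exactly as the paper does, and the reverse inclusion is the paper's appeal to perfectness of the pairing \eqref{eq-sobolev-pairing}, just made explicit as a distributional identity for the zero extensions on $\cx^n$. One step is misstated, though not fatally: the restriction $u=U|_D$ of an \emph{arbitrary} $U\in W^s_{p,q-1}(\cx^n)$ does \emph{not} automatically lie in $A^{p,q-1}_{W^s}(D)$, since $\dbar U$ is in general only in $W^{s-1}_{p,q}(\cx^n)$, so $\dbar u$ need not have coefficients in $W^s(\ol{D})$; the maximal domain is a proper subspace of $W^s_{p,q-1}(\ol{D})$ precisely because of this. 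The repair is immediate and is in fact what your next sentence already uses: test the transpose identity only against restrictions of forms in $\mathcal{D}^{p,q-1}(\cx^n)$, which do lie in $A^{p,q-1}_{W^s}(D)$ and whose density in $W^s_{p,q-1}(\cx^n)$ suffices to upgrade the pairing identity to the distributional equality $\dbar(\zs\phi)=(-1)^{p+q}\zs\psi$ and hence to membership in $A^{n-p,n-q-1}_{c,W^{-s}}(D)$. With that correction the argument is complete and matches the paper's.
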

In other words, the minimal $W^{-s}$-realization corresponds to the \emph{dual} co-chain complex (up to a sign depending on degree)  of the $W^s$-realization in the language of \cite{meichichristine}.
\begin{proof}
	 We can identify $(W^s_{p,q}(\ol{D}))' $ with $(W^{-s}_0)_{n-p,n-q}(\ol{D})$,  and identify $(W^s_{p,q-1}(\ol{D}))'$ with $(W^{-s}_0)_{n-p,n-q+1}(\ol{D})$ via the  pairing 	\eqref{eq-integral-pairing}. To determine the domain of the domain of definition of $(\dbar_{W^s})'$ as a subspace of $(W^{-s}_0)_{n-p,n-q}(\ol{D})$ under this identification, note that
	this domain consists of those $g\in (W^{-s}_0)_{n-p,n-q}(\ol{D})$, for which the map from $A^{p,q-1}_{W^s}(D)$ to $\cx$ given by
	\begin{equation}
	\label{eq-map}  f\mapsto \int_{D}\dbar f \wedge g
	\end{equation}
	extends to a bounded linear functional on $W^s_{p,q-1}(\ol{D})$.
	Notice now that if $f\in \mathcal{C}^\infty_{p,q-1}(\ol{D})$ and $g\in \mathcal{D}^{n-p,n-q}(D)$ then we have
	\[\int_D \dbar f \wedge g = \int_D \left(\dbar(f\wedge g) - (-1)^{p+q-1}f \wedge \dbar g\right)= (-1)^{p+q}\int_D f\wedge \dbar g.\]
	Therefore, using the perfectness of the pairing \eqref{eq-integral-pairing} we see that
	\eqref{eq-map} extends to an element of $(W^s_{p,q-1}(\ol{D}))'$ if and only if $\dbar g \in (W^{-s}_0)_{n-p,n-q+1}(\ol{D}) $
	i.e., $g\in A^{n-p,n-q}_{c, W^{-s}}(D)$.

	Since $\mathcal{C}^\infty_{p,q-1}(\ol{D})$ is dense  in $A^{p,q}_{W^s}(D)$, and $\mathcal{D}^{n-p,n-q}(D)$  is dense in $A^{n-p,n-q}_{c, W^{-s}}(D)$ it follows that
	under the pairing \eqref{eq-integral-pairing}, for $f\in A^{p,q}_{W^s}(D)$ and $g\in A^{n-p,n-q-1}_{c, W^{-s}}(D)$ we have
	\[ \int_D \dbar f \wedge g = (-1)^{p+q}\int_D f\wedge \dbar g.\]
	Therefore,
	The result follows.
\end{proof}
Recall the definitions of the indiscrete part and the reduced form of a semi-inner-product space (see \eqref{eq-indiscrete} and  \eqref{eq-reduced} above). The indiscrete part and the reduced form of a cohomology group are called the \emph{indiscrete cohomology} and the \emph{reduced cohomology}, respectively.

The pairing \eqref{eq-sobolev-pairing} gives rise to a pairing of reduced cohomologies
\begin{equation}\label{eq-serre1}
\Red H^ {p,q}_{W^{{s}}}(D)\times \Red H^{n-p,n-q}_{c, W^{-s}}(D)\to \cx
\end{equation}
called the \emph{Serre pairing},  given by
\begin{equation}\label{eq-serre2}
\cls{f}{}, \cls{g}{} \mapsto  \int_\Omega \cls{f}{}\wedge \cls{g}{}:= \int_D f\wedge g, \quad  f\in Z^ {p,q}_{W^{{s}}}(D) , g \in  Z^{n-p,n-q}_{c, W^{-s}}(D),
\end{equation}
where the integral in \eqref{eq-serre2} is understood in the same sense as in \eqref{eq-integral-pairing}, i.e. as a limit. It is not difficult to see that this mapping is well-defined and a continuous bilinear mapping of Hilbert spaces.
Using this pairing we can state the following result, analogs of which may be found in \cite{serre, laufer, serreduality,meichichristine}:
\begin{prop}
	\label{prop-serre-laufer}
Let $D$ be a bounded Lipschitz domain in $\cx^n$, let $0\leq p,q \leq n$ and $s$ be an integer. Then
\begin{enumerate}[wide]	\item $\Ind H^{p,q}_{W^s}(D)=0$ if and only if $\Ind H^{n-p,n-q+1}_{c, W^{-s}}(D)=0$.
	\item the {Serre pairing} \eqref{eq-serre1} given by  \eqref{eq-serre2}
	is perfect, so
	$\Red H^{p,q}_{W^s}(D)$ and $\Red H^{n-p,n-q}_{c, W^{-s}}(D))$ are duals of each other via the Serre pairing.

	\end{enumerate}
\end{prop}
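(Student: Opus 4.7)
The plan is to deduce both parts from the identification of the transpose of $\dbar_{W^s}$ with $(-1)^{p+q}\dbar_{c,W^{-s}}$ established in the preceding proposition, combined with standard Hilbert-space tools: the closed range theorem for part~(1) and Hahn--Banach together with the general annihilator identities $(\ker T^*)^{\perp} = \overline{\range T}$ and $\ker T = (\range T^*)^{\perp}$ for part~(2). Throughout I will write $\delta_q = \dbar_{W^s}: W^s_{p,q-1}(\ol{D}) \dashrightarrow W^s_{p,q}(\ol{D})$, so that its adjoint is, up to a sign, $\delta_q^* = \dbar_{c,W^{-s}}: (W^{-s}_0)_{n-p,n-q}(\ol{D}) \dashrightarrow (W^{-s}_0)_{n-p,n-q+1}(\ol{D})$ under the duality furnished by \eqref{eq-sobolev-pairing}; then $\ker \delta_{q+1} = Z^{p,q}_{W^s}(D)$, $\range \delta_q = B^{p,q}_{W^s}(D)$, $\ker \delta_q^* = Z^{n-p,n-q}_{c,W^{-s}}(D)$, and $\range \delta_{q+1}^* = B^{n-p,n-q}_{c,W^{-s}}(D)$.

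For part~(1), I will observe that $\Ind H^{p,q}_{W^s}(D) = 0$ holds exactly when $B^{p,q}_{W^s}(D) = \range \delta_q$ is closed in $Z^{p,q}_{W^s}(D)$, equivalently (since $Z^{p,q}_{W^s}(D)$ is already closed in $W^s_{p,q}(\ol{D})$ as the kernel of a continuous operator) in the ambient space $W^s_{p,q}(\ol{D})$. The closed range theorem applied to $\delta_q$ then gives that this is equivalent to the closedness of $\range \delta_q^* = B^{n-p,n-q+1}_{c,W^{-s}}(D)$, which is precisely the vanishing of $\Ind H^{n-p,n-q+1}_{c,W^{-s}}(D)$.

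For part~(2), I will show that the Serre pairing descends to a well-defined perfect pairing on $\Red H^{p,q}_{W^s}(D) \times \Red H^{n-p,n-q}_{c,W^{-s}}(D)$. Well-definedness of the descent past the $\overline{B}$ quotient follows from the integration-by-parts identity inherited from the transpose relation together with continuity of the pairing \eqref{eq-sobolev-pairing}. For injectivity of the induced map into the continuous dual, if $f \in Z^{p,q}_{W^s}(D)$ pairs to zero against every $g \in Z^{n-p,n-q}_{c,W^{-s}}(D) = \ker \delta_q^*$, then $f \in (\ker \delta_q^*)^{\perp} = \overline{\range \delta_q} = \overline{B^{p,q}_{W^s}(D)}$, so $[f] = 0$ in $\Red H^{p,q}_{W^s}(D)$. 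For surjectivity, given a continuous functional on $\Red H^{n-p,n-q}_{c,W^{-s}}(D)$, I will lift it to a continuous functional on $Z^{n-p,n-q}_{c,W^{-s}}(D)$ annihilating $\overline{B^{n-p,n-q}_{c,W^{-s}}(D)}$, extend by Hahn--Banach to $(W^{-s}_0)_{n-p,n-q}(\ol{D})$, and represent the extension as integration against some $f \in W^s_{p,q}(\ol{D})$. Since the extension vanishes on $B^{n-p,n-q}_{c,W^{-s}}(D) = \range \delta_{q+1}^*$, the identity $(\range \delta_{q+1}^*)^{\perp} = \ker \delta_{q+1}$ forces $f \in Z^{p,q}_{W^s}(D)$ automatically, and $[f]$ supplies the required preimage.

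I expect the main technical care to be in the bookkeeping of bidegrees: two distinct operators in the complex ($\delta_q$ and $\delta_{q+1}$) and their transposes must be handled simultaneously, and the correct annihilator identity must be invoked in each half of the perfect-pairing argument. The Lipschitz hypothesis on $D$ enters through parts~\ref{part-3} and \ref{part-4} of Proposition~\ref{prop-maxrealproperties}, whose density statements validate the transpose identity from the preceding proposition on the full graph-norm domains of $\dbar_{W^s}$ and $\dbar_{c,W^{-s}}$; this is precisely the hypothesis the Hilbert-space functional analysis above requires.
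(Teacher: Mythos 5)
Your proposal is correct and follows essentially the same route as the paper: part (1) via the closed range theorem applied to the transpose pair $\dbar_{W^s}$ / $\dbar_{c,W^{-s}}$, and part (2) by representing functionals on the reduced cohomology through the perfect Sobolev pairing and using the annihilator identities to see that the representative is a cocycle and that the kernel of the induced map is $\ol{B}$. The only cosmetic difference is that you extend the functional from the closed subspace of cocycles by Hahn--Banach and invoke $(\range T')^{\circ}=\ker T$ abstractly, where the paper extends via the orthogonal projection onto $\ker\dbar$ and checks $\dbar g=0$ by testing against $\dbar h$ for test forms $h$ --- the same mechanism in a Hilbert space.
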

\begin{proof}
	Part (1) is equivalent to the statement that $H^{p,q}_{W^{s}}(D)$ is Hausdorff
	if and only if $H^{n-p,n-q+1}_{c, W^{-s}}(D)$ is Hausdorff. Further,  $H^{p,q}_{W^{s}}(D)$ is Hausdorff if and only if the  operator $\dbar_{W^s}: W^s_{p,q-1}(\ol{D})\dashrightarrow W^s_{p,q}(\ol{D})$ has closed range,
	but this is equivalent to the fact that the transposed operator $\dbar_{c, W^{-s}}: (W^s_0)_{n-p, n-q-1}(\ol{D}) \dashrightarrow  (W^s_0)_{n-p, n-q}(\ol{D})$ has closed range, i.e., $H^{n-p,n-q+1}_{c, W^{-s}}(D)$ is Hausdorff.
	
	Part (2): let $\gamma: \Red H^{p,q}_{W^s}(D)\to \cx$ be a bounded linear functional. We need to show that there is a unique cohomology class
	$\theta\in \Red H^{n-p,n-q}_{c, W^{-s}}(D) $ such that for each cohomology class $\alpha \in \Red H^{p,q}_{W^s}(D)$ we have
	\[ \gamma(\alpha)= \int_D \theta\wedge \alpha. \]

	To see uniqueness of such $\theta$, suppose that there are $\theta_1, \theta_2 \in \Red H^{n-p,n-q}_{c, W^{-s}}(D) $ such that for each cohomology class $\alpha \in \Red H^{p,q}_{W^s}(D)$ we have $\gamma(\alpha)= \int_D \theta_1\wedge \alpha = \int_D \theta_2\wedge \alpha$. If $\theta_1=[g_1]$ and
	$\theta_2=[g_2]$, this means that for each $f\in  Z^{p,q}_{W^{{s}}}(D)$ we have $\int_Df \wedge (g_1-g_2)=0$. Since $f\in \ker \dbar_{W^{{s}}}$, this means
	that $g_1-g_2\in \ol{\mathrm{range}\,\dbar_{c, W^{-s}} }$. Therefore
	$\theta_1=[g_1]=\theta_2=[g_2]$ in $\Red H^{n-p,n-q}_{c, W^{-s}}(D) $ .

	To prove existence of $\theta$, let $B_s: W^{{s}}_{p,q}(D)\to \ker \dbar_{W^{{s}}}\cap W^{{s}}_{p,q}(D)$ denote the orthogonal projection (with respect to the $W^{{s}}$ inner product) onto the space of $\dbar$-closed forms. Consider the linear map $\wt{\gamma}: W^{{s}}_{p,q}(D)\to \cx$ given by
	\[ \wt{\gamma}(f)= \gamma([B_s f]),\]
	which is obviously continuous. We can find a {$g\in (W^{-s}_0)_{n-p,n-q}(D)$} such that
	$\wt{\gamma}(f)= \int_D g\wedge f$. We notice that this $g$ in fact lies in {$Z^{n-p,n-q}_{c, W^{-s}}(D)$}: if $h\in \mathcal{D}_{p,q-1}(D)$ is a test-form, then pairing a current against a test form we have
	\[\int_D \dbar g\wedge h  =(-1)^{p+q+1}\int_D g \wedge \dbar h = (-1)^{p+q+1}\wt{\gamma}(\dbar h)= (-1)^{p+q+1}\gamma([B_s \dbar h]) =0.  \]
	Therefore, if $\theta=[g]\in \Red H^{n-p,n-q}_{c, W^{-s}}(D)$, we have $\gamma(\alpha)=\int_D\theta\wedge \alpha$ for each class $\alpha\in \Red H^{p,q}_{W^s}(D)$.
	
	Similarly, if $\delta:\Red H^{n-p,n-q}_{c, W^{-s}}(D)\to \cx$ is a continuous linear functional,  a similar argument shows that there is a unique
	$\phi\in \Red H^{p,q}_{W^s}(D)$ such that for each $\beta\in \Red H^{n-p,n-q}_{c, W^{-s}}(D)$ we have
	\[ \delta(\beta)= \int_D \beta\wedge \phi. \]
\end{proof}

\subsection{Vanishing theorems in pseudoconvex domains}
In order to apply the exact sequences, we will need to use vanishing results
for the cohomology, which we will now recall. To state the results succinctly let us introduce the following definition: let $s\geq 0$ be an integer. We will say that a bounded domain $D\subset\cx^n$ is
\emph{sufficiently smooth} (for the integer $s$) if
\begin{enumerate}
	\item $s\geq 2$ and the boundary of $D$ is of class $\mathcal{C}^{s+1}$.
	\item $s=1$ and the boundary of $D$ is of class $\mathcal{C}^{1,1}$.
	\item $s=0$ and no conditions are imposed on the boundary.
\end{enumerate}

The following result summarizes the basic facts about the $\dbar$-problem on pseudoconvex domains. For $s=0$, this goes back to \cite{hormander1965}, and for $s\geq 1$
to \cite{kohn73}. The more refined boundary conditions were obtained in \cite{harrington2009} (for $s\geq 2$) and \cite{DebrajPhil} for $s=1$.

\begin{thm}\label{thm-hormander-kohn} Let   $D$ be a bounded pseudoconvex domain in $\cx^n$, and let $s\geq 0$ be an integer. Then  $H^{p,0}_{W^s}(D)$ is
	the infinite dimensional Hilbert space of holomorphic $p$-forms with $W^s$ coefficients. Further, if $D$ is sufficiently smooth in the above sense for the integer $s$,
	we have $H^{p,q}_{W^s}(D)=0$ if $q\geq 1$.
\end{thm}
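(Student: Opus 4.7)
The plan is to split the statement in two. First, for $H^{p,0}_{W^s}(D)$: I observe that $B^{p,0}_{W^s}(D)=0$ by definition, so the group equals $Z^{p,0}_{W^s}(D)$, the $(p,0)$-forms with coefficients in $W^s(\ol{D})$ annihilated by $\dbar$ distributionally. By interior ellipticity of $\dbar$ in bidegree $(p,0)$ (Weyl's lemma), such coefficients are holomorphic on $D$, and infinite-dimensionality is immediate since polynomial $p$-forms provide infinitely many linearly independent elements. The graph norm reduces to the $W^s$-norm on this subspace, which is closed in $W^s_{p,0}(\ol{D})$, giving the Hilbert space structure.

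For the vanishing $H^{p,q}_{W^s}(D)=0$ when $q\geq 1$, my plan is to invoke the weighted $\dbar$-Neumann theory on pseudoconvex domains, taking a slightly different route for each range of $s$. In the case $s=0$, I would apply H\"ormander's $L^2$ technique directly: given $f\in Z^{p,q}_{L^2}(D)$, solve $\dbar u=f$ with $u\in L^2_{p,q-1}(D)$ using the weighted $L^2$ estimate obtained from a strictly plurisubharmonic exhaustion weight of the form $t\abs{z}^2$. No regularity of $bD$ enters. For $s\geq 1$, I would pass to Kohn's weighted $\dbar$-Neumann operator \cite{kohn73}, built from a weight $e^{-t\phi}$ with $\phi$ strictly plurisubharmonic and smooth up to the boundary and $t$ chosen large depending on $s$. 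The weighted basic estimate, combined with tangential commutator arguments and a standard elliptic regularization, upgrades $L^2$-regularity of the canonical solution to $W^s$-regularity and produces the required solution.

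The main obstacle is matching the sharp boundary regularity stated: $\mathcal{C}^{s+1}$ for $s\geq 2$ and $\mathcal{C}^{1,1}$ for $s=1$. For $s\geq 2$ I would follow \cite{harrington2009}, carefully tracking the number of derivatives of the defining function $\rho$ that appear in the commutators of tangential vector fields with $\dbar$ and $\dbar^*$, and in the boundary integrals of the basic estimate; a $\mathcal{C}^{s+1}$ boundary gives exactly enough to close these estimates at the $W^s$ level. The case $s=1$, handled in \cite{DebrajPhil}, is the most delicate: a $\mathcal{C}^{1,1}$ defining function has only Lipschitz first derivatives, so one cannot form classical second derivatives of $\rho$ as in the standard calculation. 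Handling this requires exploiting the almost-everywhere second differentiability of $\mathcal{C}^{1,1}$ functions together with approximation of $D$ by smoother pseudoconvex subdomains and a careful passage to the limit in the weighted basic estimate.
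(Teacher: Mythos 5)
Your proposal is correct and follows essentially the same route as the paper, which offers no proof of this theorem beyond citing H\"ormander for $s=0$, Kohn's weighted theory for $s\geq 1$, and the sharper boundary-regularity results of \cite{harrington2009} ($s\geq 2$) and \cite{DebrajPhil} ($s=1$) --- precisely the sources and technical points you identify. Your treatment of the $q=0$ case via Weyl's lemma and the closedness of $Z^{p,0}_{W^s}(D)$ is also the standard argument implicit in the paper.
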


From this we can deduce the following, which is  well-known for $s=0$ (see \cite{serre}):
\begin{cor}
	\label{cor-l2coh-compact}
	Let $D\subset\cx^n$
	be a bounded pseudoconvex domain for $n\geq 1$ and let $s\geq 0$ be an integer {such that $D$ is sufficiently smooth} for $s$. For  $0\leq p \leq n$ we have
	\[
	H^{p,q}_{c, W^{-s}}(D)= \begin{cases}
	0 & \text{ if }0\leq q \leq n-1,\\
	\text{can be identified with {$(H^{n-p,0}_{W^{s}}(D))'$}} & \text{  if } q=n.
	\end{cases}
	\]
	It follows that $H^{p,n}_{c, {W^{-s}}}(D)$ is	{{Hausdorff} and infinite dimensional}.
\end{cor}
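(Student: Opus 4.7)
The plan is to combine Sobolev Serre duality (Proposition~\ref{prop-serre-laufer}) with the vanishing and structure statements of Theorem~\ref{thm-hormander-kohn}, arguing by cases on $q$. Note that when $s\geq 1$, ``sufficiently smooth'' forces $D$ to be at least $\mathcal{C}^{1,1}$ and in particular Lipschitz, so the hypotheses of Proposition~\ref{prop-serre-laufer} are satisfied; for $s=0$ the relevant $L^2$-vanishing on bounded pseudoconvex domains is Hörmander's classical theorem and the density facts underlying the Serre pairing still apply.

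The case $q=0$ is immediate from Proposition~\ref{prop-maxrealproperties}(\ref{part-6}), which gives $H^{p,0}_{c,W^{-s}}(D)=0$ for any integer $s$.

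For $1\leq q\leq n-1$ I would establish separately that the reduced and indiscrete parts of $H^{p,q}_{c,W^{-s}}(D)$ both vanish. Applying Proposition~\ref{prop-serre-laufer}(1) with $(p',q')=(n-p,\,n-q+1)$: since $n-q+1\geq 2$, Theorem~\ref{thm-hormander-kohn} gives $H^{n-p,n-q+1}_{W^s}(D)=0$, so its indiscrete part vanishes, forcing $\Ind H^{p,q}_{c,W^{-s}}(D)=0$. Applying Proposition~\ref{prop-serre-laufer}(2) with $(p',q')=(n-p,\,n-q)$: now $n-q\geq 1$, so Theorem~\ref{thm-hormander-kohn} gives $H^{n-p,n-q}_{W^s}(D)=0$, and the perfect Serre pairing then forces $\Red H^{p,q}_{c,W^{-s}}(D)=0$. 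Combining, $H^{p,q}_{c,W^{-s}}(D)=0$.

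For $q=n$, Proposition~\ref{prop-serre-laufer}(1) with $(p',q')=(n-p,1)$, together with $H^{n-p,1}_{W^s}(D)=0$ from Theorem~\ref{thm-hormander-kohn}, shows that $H^{p,n}_{c,W^{-s}}(D)$ is Hausdorff and hence coincides with its reduced form. Proposition~\ref{prop-serre-laufer}(2) with $(p',q')=(n-p,0)$ then identifies this space with the dual of $\Red H^{n-p,0}_{W^s}(D)$; but by Theorem~\ref{thm-hormander-kohn}, $H^{n-p,0}_{W^s}(D)$ is the infinite-dimensional Hilbert space of holomorphic $(n-p)$-forms with $W^s$ coefficients, already Hausdorff, so $\Red H^{n-p,0}_{W^s}(D)=H^{n-p,0}_{W^s}(D)$. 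The duality therefore yields both the claimed identification $H^{p,n}_{c,W^{-s}}(D)\cong (H^{n-p,0}_{W^s}(D))'$ and the infinite dimensionality. The argument is essentially bookkeeping; the main work has been packaged into Proposition~\ref{prop-serre-laufer} and Theorem~\ref{thm-hormander-kohn}, and the only real point to check is the alignment of the degree shifts, particularly the ``off by one'' shift in the Hausdorffness statement of Proposition~\ref{prop-serre-laufer}(1) as opposed to the reduced-cohomology duality in part (2).
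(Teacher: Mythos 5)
Your proof is correct and follows essentially the same route as the paper's: the $q=0$ case from Proposition~\ref{prop-maxrealproperties}(\ref{part-6}), then Theorem~\ref{thm-hormander-kohn} fed into both parts of Proposition~\ref{prop-serre-laufer} to kill the indiscrete and reduced parts for $1\leq q\leq n-1$ and to identify the $q=n$ group with $(H^{n-p,0}_{W^s}(D))'$. The only difference is presentational: you track the degree shifts and the $\Ind$/$\Red$ decomposition explicitly, where the paper compresses this into the observation that all the groups are Hausdorff and the Serre pairing is perfect.
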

\begin{proof} The case $q=0$ is contained in part~\ref{part-6} of Proposition~\ref{prop-maxrealproperties}.
	Theorem~\ref{thm-hormander-kohn} combined with Proposition~\ref{prop-serre-laufer}  shows that the groups $H^{p,q}_{c,W^{-s}}(D)$ are all Hausdorff. Therefore, by Serre duality (Proposition~\ref{prop-serre-laufer})  the Serre pairing
	\[ H^{p,q}_{c, W^{-s}}(D)\times H^{n-p,n-q}_{W^s}(D)\to \cx,\]
	is perfect for all $p,q$. It follows that $H^{p,q}_{c, W^{-s}}(D)=0$ provided $H^{n-p,n-q}_{W^s}(D)=0$, i.e, $n-q \geq 1$ or equivalently
	$q \leq n-1$. For $q=n$ we see that the Serre pairing gives an identification of $H^{p,q}_{c, W^{-s}}(D)$ with $(H^{n-p,n-q}_{W^s}(D))'$.
\end{proof}

\subsection{An application of Theorem~\ref{thm-short}: envelopes with vanishing cohomology}\label{sec-short} In this section we consider the consequences of the short exact sequence \eqref{eq-short-main} when we assume that
in some degree $(p,q)$ we have vanishing of the $W^{s+1}$-cohomology of the envelope, i.e.,
\begin{equation}
\label{eq-envelope-vanishing}
H^{p,q}_{W^{s+1}}(\Omega_1)=0.
\end{equation}
\begin{enumerate}[wide]
	\item \emph{Assume that in an  annulus  \eqref{eq-envelope-vanishing} holds.}   From the exactness of  \eqref{eq-short-main}, we see that the map
	\begin{equation}
	\label{eq-lambda-bijection}
	\end{equation}
	\[\lambda^{p,q}:H^{p,q}_{W^{s+1}}(\Omega)\to H^{p,q+1}_{c, W^s}(\Omega_2)\]
	is a continuous bijection of semi-inner-product spaces. Notice that such a map does not necessarily have a continuous inverse if, for example, $H^{p,q+1}_{c, W^s}(\Omega_2)$ is indiscrete
	and $H^{p,q}_{W^{s+1}}(\Omega)$ is not indiscrete, i.e., has a nontrivial Hausdorff summand. It would be interesting to see if there are annuli for which the cohomologies have these properties.
	
	\item \emph{Assume that in an annulus  \eqref{eq-envelope-vanishing} holds, and  further that $H^{p,q+1}_{c, W^s}(\Omega_2)$ is Hausdorff.} Then since in \eqref{eq-lambda-bijection}, $\lambda^{p,q}$ is an injective continuous map, it follows that $H^{p,q}_{W^{s+1}}(\Omega)$ is also Hausdorff. Therefore $\lambda^{p,q}$ in \eqref{eq-lambda-bijection} is a continuous bijection of Hilbert spaces, and therefore is an isomorphism (i.e. has a bounded inverse) by a standard application of the closed-graph theorem.
	
	Thanks to Proposition~\ref{prop-serre-laufer}, if $\Omega_2$ has Lipschitz boundary,  the hypothesis that $H^{p,q+1}_{c, W^s}(\Omega_2)$ is Hausdorff is equivalent to the hypothesis that $H^{n-p,n-q}_{W^{-s}}(\Omega_2)$ is Hausdorff.
	
	\item \emph{Assume that in an annulus  \eqref{eq-envelope-vanishing} holds, and that $H^{n-p,n-q}_{W^{-s}}(\Omega_2)$ and $H^{n-p,n-q-1}_{W^{-s}}(\Omega_2)$ are both Hausdorff.} Then, as we remarked above, {$H^{p,q+1}_{W^{s}}(\Omega_2)$}
	is Hausdorff, and therefore  by Serre duality (Proposition~\ref{prop-serre-laufer}) the  pairing  \eqref{eq-serre2} is perfect and gives rise to an isomorphism
	\[ H^{p,q+1}_{c,W^s}(\Omega_2)\cong (H^{n-p,n-q-1}_{W^{-s}}(\Omega_2))'.\]
	Therefore, composing the map $\lambda^{p,q}$ of \eqref{eq-lambda-bijection} with the Serre pairing, we obtain a perfect pairing
	\begin{equation}
	\label{eq-shaw1} H^{p,q}_{W^{s+1}}(\Omega)\times H^{n-p,n-q-1}_{W^{-s}}(\Omega_2)\to \cx
	\end{equation}
	given by
	\begin{equation}
	\label{eq-shaw2} [f], [g]\mapsto \int_{\Omega_2}\lambda^{p,q}\left([f]\right)\wedge [g]= \int_{\Omega_2}\dbar Ef\wedge g ,
	\end{equation}
	where $f\in Z^{p,q}_{W^{s+1}}(\Omega) , g\in Z^{n-p,n-q-1}_{W^{-s}}(\Omega_2)$ and the integral in \eqref{eq-shaw2} is interpreted in a limiting sense as in
	the Serre pairing \eqref{eq-serre2}.
	
	\item The cases $s=0$ and $s=-1$ of the pairing \eqref{eq-shaw1} were noted by Shaw in \cite{meichiharmonic} for $q=n-1$, when $\Omega_1$ and $\Omega_2$ are both
	pseudoconvex. Here the pairing can be represented as a boundary integral:
	\begin{prop}\label{prop-shaw}
		Let $\Omega=\Omega_1\setminus\ol{\Omega}_2$ be an annulus where $\Omega_2$ has Lipschitz boundary,
		and suppose that $0\leq p,q\leq n$
		\begin{enumerate}[wide]
			
			\item ($s=0$) Suppose that $H^{p,q}_{W^{1}}(\Omega_1)=0$ and  the two groups $H^{n-p,n-q}_{L^2}(\Omega_2)$ and $H^{n-p,n-q-1}_{L^2}(\Omega_2)$ are both Hausdorff.  Then $H^{p,q}_{W^{1}}(\Omega)$ is Hausdorff and the pairing
			\[ H^{p,q}_{W^{1}}(\Omega)\times H^{n-p,n-q-1}_{L^2}(\Omega_2)\to \cx\]
			given by
			\begin{equation} \label{eq-boundary-integral1}
			[f], [g]\mapsto \int_{b\Omega_2}f\wedge P_{\Omega_2}g
			\end{equation}
			is perfect, where
			$ P_{\Omega_2}:Z^{n-p,n-q-1}_{L^2}(\Omega_2)\to \mathscr{H}^{n-p,n-q-1}_{L^2}(\Omega_2)$
			is the harmonic projection on $\Omega_2$.
			\item ($s=-1$) Suppose that $H^{p,q}_{L^2}(\Omega_1)=0$ and  the two groups $H^{n-p,n-q}_{W^1}(\Omega_2)$ and $H^{n-p,n-q-1}_{W^1}(\Omega_2)$ are both Hausdorff.   Then $H^{p,q}_{L^2}(\Omega)$ is Hausdorff and  the pairing
			\[ H^{p,q}_{L^2}(\Omega)\times H^{n-p,n-q-1}_{W^1}(\Omega_2)\to \cx\]
			given by
			\begin{equation} \label{eq-boundary-integral2}
			[f], [g]\mapsto \int_{b\Omega_2}P_\Omega f\wedge g
			\end{equation}
			is perfect, where
			$P_{\Omega}:Z^{p,q}_{L^2}(\Omega)\to \mathscr{H}^{p,q}_{L^2}(\Omega)$
			is the harmonic projection on $\Omega$.
		\end{enumerate}
	\end{prop}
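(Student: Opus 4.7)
The strategy for both parts is to specialize the abstract perfect pairing
\[
H^{p,q}_{W^{s+1}}(\Omega)\times H^{n-p,n-q-1}_{W^{-s}}(\Omega_2)\longrightarrow \cx,\qquad ([f],[g])\longmapsto \int_{\Omega_2}\dbar Ef\wedge g,
\]
established in item~(3) of Section~\ref{sec-short}, to $s=0$ for part~(1) and $s=-1$ for part~(2), and then rewrite the right-hand side as the claimed boundary integral by Stokes' theorem. In each case the assumed vanishing $H^{p,q}_{W^{s+1}}(\Omega_1)=0$ together with the assumed Hausdorffness of both $H^{n-p,n-q}_{W^{-s}}(\Omega_2)$ and $H^{n-p,n-q-1}_{W^{-s}}(\Omega_2)$ are exactly the hypotheses of item~(3); they give Hausdorffness of $H^{p,q}_{W^{s+1}}(\Omega)$ and produce the perfect pairing above as a composition of the isomorphism $\lambda^{p,q}$ with the Serre pairing on $\Omega_2$.

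The second step is to pass to a harmonic representative in one of the two arguments. For part~(1), the Hausdorffness of $H^{n-p,n-q-1}_{L^2}(\Omega_2)$ produces a Hodge decomposition $g=P_{\Omega_2}g+\dbar u$ with $u\in L^2(\Omega_2)$; symmetrically, for part~(2), the Hausdorffness of $H^{p,q}_{L^2}(\Omega)$ (which is itself a consequence of the hypotheses via the chain of equivalences in item~(3) and Proposition~\ref{prop-serre-laufer}) gives $f=P_\Omega f+\dbar v$ with $v\in L^2(\Omega)$. Since the pairing is cohomological in both arguments, its value is unchanged when we replace $g$ by $P_{\Omega_2}g$ (part~(1)) or $f$ by $P_\Omega f$ (part~(2)). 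By interior elliptic regularity for the $\dbar$-Neumann Laplacian, $P_{\Omega_2}g$ is smooth in $\Omega_2$ and $P_\Omega f$ is smooth in $\Omega$, which is the key analytic ingredient that allows a boundary integral to be formed.

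The final step applies Stokes' theorem. For part~(1), since $\dbar P_{\Omega_2}g=0$ and $Ef\in W^1(\cx^n)$, we have $\dbar(Ef\wedge P_{\Omega_2}g)=\dbar Ef\wedge P_{\Omega_2}g$ and
\[
\int_{\Omega_2}\dbar Ef\wedge P_{\Omega_2}g=\int_{\Omega_2}\dbar\bigl(Ef\wedge P_{\Omega_2}g\bigr)=\int_{b\Omega_2}Ef\wedge P_{\Omega_2}g=\int_{b\Omega_2}f\wedge P_{\Omega_2}g,
\]
the last equality using the continuity of the $W^{1/2}$-trace of $Ef$ on $b\Omega_2$ together with $Ef|_\Omega=f$. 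For part~(2), $E$ is the zero-extension operator by Proposition~\ref{prop-sobolevprops}(4) since $s+1=0\le\tfrac12$; choosing a $W^1$-extension $\wt g$ of $g$ supported in a neighborhood of $\ol\Omega_2$ that avoids $b\Omega_1$, Stokes' theorem on the annulus $\Omega$ applied to $P_\Omega f\wedge \wt g$, together with $\dbar P_\Omega f=0$ on $\Omega$ and $\dbar\wt g=0$ near $b\Omega_2$, converts the volume pairing into $\int_{b\Omega_2}P_\Omega f\wedge g$, with no contribution from $b\Omega_1$.

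The main obstacle I expect is making rigorous sense of the boundary integrals in the low-regularity setting of part~(2), where $P_\Omega f$ is only $L^2(\Omega)$ and has no classical trace on $b\Omega_2$. The crucial fact is that $P_\Omega f\in \dom(\dbar^*_{L^2})$, so that its tangential component does have a distributional boundary value capable of pairing with $g|_{b\Omega_2}\in W^{1/2}(b\Omega_2)$; combined with a smooth approximation argument for $P_\Omega f$ (using the smoothness of $P_\Omega f$ on compact subsets of $\Omega$), this makes the limiting boundary integral meaningful in the spirit of \cite{meichiharmonic}. Once well-definedness of the boundary pairings is established, their perfectness follows immediately from the perfectness of the abstract Serre pairing, since we have only replaced the arguments by cohomologous representatives.
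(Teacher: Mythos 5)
Your proposal is correct and follows essentially the same route as the paper: specialize the perfect pairing \eqref{eq-shaw1}--\eqref{eq-shaw2} from item~(3) of Section~\ref{sec-short} to $s=0$ and $s=-1$, replace one argument by its cohomologous harmonic projection, and apply Stokes' theorem to convert the volume pairing into the boundary integral. The regularity issues you flag for part~(2) are handled in the paper by the same trace argument ($P_\Omega f$ has harmonic coefficients, hence a $W^{-1/2}$ trace on $b\Omega_2$, pairing against the $W^{1/2}$ trace of $g$), so nothing is missing.
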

\end{enumerate}

\textbf{Remarks:} \begin{enumerate}[wide]
	\item Recall that the \emph{ harmonic space} in degree $(p,q)$ of a domain $D\subset\cx^n$ is defined to be
	\[  \mathscr{H}^{p,q}_{L^2}(D)= \{f\in \dom (\dbar)\cap \dom (\dbar^*): \dbar f=\dbar^*f=0\},\]
	where $\dom(\dbar)= A^{p,q}_{L^2}(D)$  and $\dbar^*:L^2_{p,q}(D)\dashrightarrow L^2_{p,q-1}(D)$ is the Hilbert space adjoint of
	$\dbar: L^2_{p,q-1}(D)\dashrightarrow L^2_{p,q}(D)$.  This is a closed subspace of $Z^{p,q}_{L^2}(D)$, since $\dbar^*$ is a closed operator. If both $\dbar$ and $\dbar^*$ have closed range the harmonic projection
	$P_D: Z^{p,q}_{L^2}(D)\to \mathscr{H}^{p,q}_{L^2}(D)$ descends to an isomorphism $H^{p,q}_{L^2}(D)\to \mathscr{H}^{p,q}_{L^2}(D)$ (Hodge representation of cohomology).
	In particular, for each $f\in  Z^{p,q}_{L^2}(D)$, we have $[f]=[P_Df]$ in $H^{p,q}_{L^2}(D)$.
	\item The integrals \eqref{eq-boundary-integral1} and \eqref{eq-boundary-integral2} are well-defined. For example, in \eqref{eq-boundary-integral2},
	since $P_\Omega f$ has harmonic coefficients in $\Omega$, it has a trace of class $W^{-\frac{1}{2}}$ on $b\Omega_2$,
	and $g$ (which has $W^1$ coefficients) admits a trace of class $W^{\frac{1}{2}}$.
	\item  The perfect pairings~\eqref{eq-boundary-integral1} and \ref{eq-boundary-integral2}, or more generally the continuous bijection \eqref{eq-lambda-bijection}  are formal analogs
	in the theory  of  Sobolev Dolbeault cohomology of the classical \emph{Alexander duality}
	in topology (see \cite{bredon}).  See \cite{golovin} for analogous results for the cohomology of coherent analytic sheaves.
	\item For $n=1$, \eqref{eq-boundary-integral2} shows the following:
	the Bergman space $H^{0,0}_{L^2}(\Omega)$ of the annulus is the dual of the space $H^{0,0}_{W^1}(\Omega_2)$ of holomorphic functions
	of class $W^1$ on the hole, and the duality pairing is given by
	\[ f,g \mapsto \int_{b\Omega_2} f(z)g(z) dz,\]
	where the integral on the right is the usual line integral of one-variable complex analysis. The other pairing \eqref{eq-boundary-integral1} similarly identifies the dual
	of the Bergman space of the hole as the space of  $W^1$-holomorphic functions on the annulus by the same pairing.
	These  may be thought of as  analogs in the $L^2$-setting  of a classical result in complex analysis of one variable due to Grothendieck-Köthe-da Silva
	%	characterizing the dual of the space of holomorphic functions on a planar domain as  a space of functions on the complement via a boundary integral
	(see \cite[p. 67ff.]{rubel}).  These considerations can be generalized to higher dimensions to obtain an integral representation of functions
	in Bergman spaces in terms of their $W^{-\frac{1}{2}}$ boundary values.
\end{enumerate}
\begin{proof}[Proof of Proposition~\ref{prop-shaw}] For part (a), all that remains to be shown is that the pairing \eqref{eq-shaw1} can also be represented by \eqref{eq-boundary-integral1}.
	But
	\begin{align*}
	\int_{\Omega_2}\lambda^{p,q}\left([f]\right)\wedge [g]&= \int_{\Omega_2}\lambda^{p,q}\left([f]\right)\wedge [P_{\Omega_2}g]=
	\int_{\Omega_2}\dbar Ef\wedge P_{\Omega_2}g\\&=\int_{\Omega_2}\dbar (Ef\wedge P_{\Omega_2} g)= \int_{\Omega_2}d(Ef\wedge P_{\Omega_2}g)= \int_{b\Omega_2}f \wedge P_{\Omega_2}g.
	\end{align*}
	Part (b) is proved exactly the same way.
\end{proof}

\subsection{Splitting of the Sobolev cohomology of an annulus} The exact sequence  \eqref{eq-short-main} splits,  like any other
exact sequence of vector spaces  (see \cite[pp. 132 ff.]{lang}). Therefore, there is an injective linear map $\mu: H^{p,q+1}_{c,W^s}(\Omega_2)\to H^{p,q}_{W^{s+1}}(\Omega)$  such that we
have an algebraic direct sum decomposition of vector spaces:
	\begin{equation}
\label{eq-alg} 	H^{p,q}_{W^{s+1}}(\Omega)=  R^{p,q}_*(H^{p,q}_{W^{s+1}}(\Omega_1))\oplus \mu(H^{p,q+1}_{c,W^s}(\Omega_2)).
\end{equation}
This splitting is not \emph{topological}, i.e., the topology on $H^{p,q}_{W^{s+1}}(\Omega)$ is
not the direct sum topology of the two summands on the right hand side (given by the semi-inner-product \eqref{eq-directsum}.) Neither is this splitting \emph{natural}, i.e., $\mu$ is not
determined by the exact sequence \eqref{eq-short-main}. However, note that \eqref{eq-alg} already determines the cohomology of an annulus as a vector space, and gives a condition for its vanishing.

More information about the topology of $H^{p,q}_{W^{s+1}}(\Omega)$ can be obtained applying the observations of  Section~\ref{sec-shortexact}:

\begin{prop}\label{prop-splitting}
In the exact sequence \eqref{eq-short-main}:
	\begin{enumerate}
		\item  suppose that the cohomology $H^{p,q+1}_{c, W^s}(\Omega_2)$ is Hausdorff. Then, in the splitting \eqref{eq-alg}, the map $\mu$ can be so chosen that it is a linear homeomorphism
		onto its image, and the splitting is topological. However, the splitting is not natural.
		\item  if both the cohomology groups  $H^{p,q+1}_{c, W^s}(\Omega_2) $ and  $H^{p,q}_{W^{s+1}}(\Omega)$ are
		Hausdorff, then $H^{p,q}_{W^{s+1}}(\Omega_1)$ is also Hausdorff, and we have a natural orthogonal splitting
		\begin{equation}
		\label{eq-ortho} 	 	H^{p,q}_{W^{s+1}}(\Omega) = R^{p,q}_*(H^{p,q}_{W^{s+1}}(\Omega_1))\oplus (\lambda^{p,q})^\dagger (H^{p,q+1}_{c,W^s}(\Omega_2)),
		\end{equation}
		where $(\lambda^{p,q})^\dagger :H^{p,q+1}_{c,W^s}(\Omega_2)\to H^{p,q}_{W^{s+1}}(\Omega)$ is the Hilbert space adjoint of the map
		$\lambda^{p,q}: H^{p,q}_{W^{s+1}}(\Omega)\to H^{p,q+1}_{c,W^s}(\Omega_2)$ of \eqref{eq-lambdadef}. 	\end{enumerate}
\end{prop}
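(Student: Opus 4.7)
The plan is to exploit the fact that whenever a cohomology group $H^{p,q}_{W^{s+1}}(\Omega)$ or $H^{p,q+1}_{c,W^s}(\Omega_2)$ is Hausdorff, it is automatically a Hilbert space: the cocycle space is closed in an ambient Hilbert space of currents, and Hausdorffness of the quotient forces the coboundary space to be closed. More generally, the reduced form $\Red H^{p,q}_{W^{s+1}}(\Omega) = Z^{p,q}_{W^{s+1}}(\Omega)/\overline{B^{p,q}_{W^{s+1}}(\Omega)}$ is always a Hilbert space. With these observations in hand, both parts of the proposition reduce to standard Hilbert-space constructions combined with the topological decomposition $X \cong \Red(X) \oplus \Ind(X)$ from Proposition~\ref{prop-structure}.

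For part~(2), both $H^{p,q}_{W^{s+1}}(\Omega)$ and $H^{p,q+1}_{c,W^s}(\Omega_2)$ are Hilbert, so the adjoint $(\lambda^{p,q})^\dagger$ is a bounded linear operator. By exactness of \eqref{eq-short-main}, $\ker \lambda^{p,q} = R^{p,q}_*(H^{p,q}_{W^{s+1}}(\Omega_1))$, which is closed as the kernel of a continuous map into a Hausdorff space. The orthogonal decomposition
\[
H^{p,q}_{W^{s+1}}(\Omega) = \ker \lambda^{p,q} \oplus (\ker \lambda^{p,q})^\perp
\]
holds, and since $\lambda^{p,q}$ is surjective (hence has closed range), the closed range theorem yields $(\ker \lambda^{p,q})^\perp = \range (\lambda^{p,q})^\dagger$, giving \eqref{eq-ortho}. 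Hausdorffness of $H^{p,q}_{W^{s+1}}(\Omega_1)$ then follows because $R^{p,q}_*$ is a continuous injection into a Hausdorff space, which forces $\Ind H^{p,q}_{W^{s+1}}(\Omega_1) \subset \ker R^{p,q}_* = 0$.

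For part~(1), only $H^{p,q+1}_{c,W^s}(\Omega_2)$ is assumed Hausdorff, so $H^{p,q}_{W^{s+1}}(\Omega)$ need not be Hilbert. I would factor $\lambda^{p,q} = \overline{\lambda^{p,q}} \circ \pi$, where $\pi: H^{p,q}_{W^{s+1}}(\Omega) \to \Red H^{p,q}_{W^{s+1}}(\Omega)$ is the quotient map (the factorization exists because continuous linear maps into a Hausdorff space must vanish on the indiscrete part), and $\overline{\lambda^{p,q}}$ is the induced continuous surjection between Hilbert spaces. Exactly as in part~(2), the Hilbert-space adjoint produces a bounded section $\sigma = (\overline{\lambda^{p,q}})^\dagger \circ \bigl(\overline{\lambda^{p,q}} (\overline{\lambda^{p,q}})^\dagger\bigr)^{-1}$ of $\overline{\lambda^{p,q}}$. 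To lift $\sigma$ to a section of $\lambda^{p,q}$, I invoke Proposition~\ref{prop-structure} applied to $X = H^{p,q}_{W^{s+1}}(\Omega)$, which supplies a continuous linear section $r$ of $\pi$. Setting $\mu := r \circ \sigma$, one checks that $\mu$ is continuous, linear, and injective with $\lambda^{p,q} \circ \mu = \mathrm{id}$; consequently $\mu \circ \lambda^{p,q}$ is a continuous idempotent whose range is $\mu(H^{p,q+1}_{c,W^s}(\Omega_2))$ and whose complementary idempotent $\mathrm{id} - \mu \circ \lambda^{p,q}$ has range $\ker \lambda^{p,q} = R^{p,q}_*(H^{p,q}_{W^{s+1}}(\Omega_1))$, yielding the topological direct sum. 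The restriction of $\lambda^{p,q}$ to $\mu(H^{p,q+1}_{c,W^s}(\Omega_2))$ is a continuous inverse for $\mu$, so $\mu$ is a linear homeomorphism onto its image.

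The main conceptual point to flag, rather than a technical obstacle, is the non-naturality in part~(1): the section $\sigma$ depends on the Hilbert inner product used to compute $(\overline{\lambda^{p,q}})^\dagger$, and the lifting $r$ depends on the choice of realization of $X \cong \Red(X) \oplus \Ind(X)$; neither is determined by the data of the short exact sequence \eqref{eq-short-main} alone. In part~(2), the Hausdorffness of $H^{p,q}_{W^{s+1}}(\Omega)$ collapses both choices into the single canonical orthogonal-complement construction, which is natural in the Hilbert structures on the cocycle spaces induced by the graph norms.
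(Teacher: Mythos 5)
Your proposal is correct and follows essentially the same route as the paper, whose proof of Proposition~\ref{prop-splitting} is carried out in items (3) and (4) of Section~\ref{sec-shortexact} of the appendix: pass to the Hausdorff quotient by the indiscrete part via Proposition~\ref{prop-structure}, apply Hilbert-space orthogonality together with the closed-graph/open-mapping theorem there, and use the adjoint $\lambda^\dagger$ for the natural splitting when both groups are Hausdorff. Your explicit section $(\overline{\lambda^{p,q}})^\dagger\bigl(\overline{\lambda^{p,q}}(\overline{\lambda^{p,q}})^\dagger\bigr)^{-1}$ composed with the isometric section of $\pi$ lands on exactly the subspace $V'$ that the paper constructs as the orthogonal complement of $V$ inside a chosen algebraic complement of the indiscrete part, so the difference is only one of packaging.
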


%%%%%%%%%%%%%%%%%%%%%%%%%%%%%%%%%%
\subsection{$L^2$-cohomology of an annulus with pseudoconvex hole}\label{sec-long}

We now combine the long exact sequence \eqref{eq-long-hole-min} and the short exact sequence \eqref{eq-short-main} to prove the following result, which gives $L^2$-estimates on an annulus provided we
have $L^2$-estimates on the envelope and pseudoconvex hole:
	
	\begin{thm} \label{thm-annulus1}
		Let $\Omega=\Omega_1\setminus \ol{\Omega}_2$ be an annulus in $\cx^n$  such that $H^{p,q}_{L^2}(\Omega_1)$ is Hausdorff in each degree, and
		$\Omega_2$ is pseudoconvex with $\mathcal{C}^{1,1}$ boundary.
%		Let $R_*: H^{p,q}_{L^2}(\Omega_1)\to H^{p,q}_{L^2}(\Omega)$ be the restriction map
%		on cohomology, induced by the restriction map on forms.
		 Then $H^{p,q}_{L^2}(\Omega)$ is Hausdorff for each $0\leq p,q\leq n$, and we have for $0\leq p \leq n$:
		
		\[ H^{p,q}_{L^2}(\Omega)= \begin{cases}  R_*^{p,q}(H^{p,q}_{L^2}(\Omega_1)) & \text{ if } 0\leq q \leq n-2\\
		R_*^{p,n-1}(H^{p,n-1}_{L^2}(\Omega_1))\oplus (\lambda^{p,n-1})^\dagger \left(H^{p,n}_{c,W^{-1}}(\Omega_2) \right)&\text{ if } q=n-1\\
		0& \text{ if } q=n,
		\end{cases}\]
		where the notation is as in Theorem~\ref{thm-short} and Proposition~\ref{prop-splitting}.
	\end{thm}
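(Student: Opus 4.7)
The plan is to apply Theorem~\ref{thm-short} at $s=-1$, which for each $q$ produces the short exact sequence
\begin{equation*}
0 \to H^{p,q}_{L^2}(\Omega_1) \xrightarrow{R_*^{p,q}} H^{p,q}_{L^2}(\Omega) \xrightarrow{\lambda^{p,q}} H^{p,q+1}_{c,W^{-1}}(\Omega_2) \to 0,
\end{equation*}
and to identify the right-hand term via Corollary~\ref{cor-l2coh-compact}. Since $\Omega_2$ is pseudoconvex with $\mathcal{C}^{1,1}$-boundary, it is sufficiently smooth for $s=1$; that corollary then yields $H^{p,q+1}_{c,W^{-1}}(\Omega_2)=0$ for $0 \le q \le n-2$, while $H^{p,n}_{c,W^{-1}}(\Omega_2)$ is Hausdorff (infinite-dimensional, identified with $(H^{n-p,0}_{W^1}(\Omega_2))'$). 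The three cases of the theorem correspond precisely to whether the right-hand term is zero, nonzero and Hausdorff, or dimensionally vacuous.

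For $0 \le q \le n-2$ the right-hand term vanishes, so $R_*^{p,q}$ is a continuous bijection. Proposition~\ref{prop-splitting}(1) applies (with the trivial hypothesis that $0$ is Hausdorff) and upgrades $R_*^{p,q}$ to a topological isomorphism onto its image $H^{p,q}_{L^2}(\Omega)$; the target inherits Hausdorffness from the Hausdorff hypothesis on $H^{p,q}_{L^2}(\Omega_1)$, and the required identification $H^{p,q}_{L^2}(\Omega)=R_*^{p,q}(H^{p,q}_{L^2}(\Omega_1))$ is immediate.

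For $q=n-1$ I would use Proposition~\ref{prop-splitting}(1) first, with the Hausdorffness of $H^{p,n}_{c,W^{-1}}(\Omega_2)$, to obtain a (non-natural) topological direct-sum decomposition of $H^{p,n-1}_{L^2}(\Omega)$ into two Hausdorff summands; this alone forces $H^{p,n-1}_{L^2}(\Omega)$ to be Hausdorff. With this Hausdorffness in hand, Proposition~\ref{prop-splitting}(2) becomes applicable and promotes the decomposition to the natural orthogonal splitting $R_*^{p,n-1}(H^{p,n-1}_{L^2}(\Omega_1)) \oplus (\lambda^{p,n-1})^\dagger(H^{p,n}_{c,W^{-1}}(\Omega_2))$ claimed in the theorem.

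For $q=n$ both flanking terms of the short exact sequence vanish: $H^{p,n+1}_{c,W^{-1}}(\Omega_2)=0$ for dimensional reasons, and $H^{p,n}_{L^2}(\Omega_1)=0$ by the classical zero-extension argument (extend $f\in L^2_{p,n}(\Omega_1)$ by zero to an ambient ball $B\Supset \overline{\Omega_1}$, solve $\dbar u=\tilde{f}$ in $L^2(B)$ by H\"ormander's theorem, and restrict). Hence $H^{p,n}_{L^2}(\Omega)=0$. The main subtlety I foresee is this two-step Hausdorffness argument in degree $n-1$: Proposition~\ref{prop-splitting}(2) requires Hausdorffness of the middle group as an \emph{input}, so one has to first squeeze it out of part~(1) before the orthogonal splitting becomes available. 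Everything else is bookkeeping around the short exact sequence.
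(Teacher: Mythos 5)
There is a genuine gap in your Hausdorffness argument, and it is precisely the point to which the paper devotes most of its proof. You assert twice that Hausdorffness is pushed \emph{forward} along the continuous injection $R^{p,q}_*$: once for $0\le q\le n-2$ (``the target inherits Hausdorffness from the Hausdorff hypothesis on $H^{p,q}_{L^2}(\Omega_1)$'') and once for $q=n-1$ (claiming the splitting of Proposition~\ref{prop-splitting}(1) exhibits $H^{p,n-1}_{L^2}(\Omega)$ as a sum of \emph{two} Hausdorff summands). Both claims run the implication the wrong way. A continuous bijection from a Hausdorff semi-inner-product space onto a non-Hausdorff one is perfectly possible (the identity from a Hilbert space to the same vector space with the zero seminorm), so Hausdorffness of $H^{p,q}_{L^2}(\Omega_1)$ says nothing about its image in $H^{p,q}_{L^2}(\Omega)$. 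In the topological splitting of Section~\ref{sec-shortexact}, when $Z$ is Hausdorff one only gets that $\rho(X)$ is \emph{closed} in $Y$ and that the complement $\mu(Z)$ is Hausdorff; being closed, $\rho(X)$ necessarily contains $\Ind(Y)$, so if $Y$ fails to be Hausdorff the entire indiscrete part hides inside $\rho(X)$ with the subspace topology. Nothing in the short exact sequence \eqref{eq-short-main} alone rules this out, and Proposition~\ref{prop-splitting}(2) cannot be invoked until Hausdorffness of the middle term is known.

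The paper supplies this missing ingredient by an entirely separate argument that your proposal never touches: by Serre duality (Proposition~\ref{prop-serre-laufer}) it suffices to show the minimal cohomologies $H^{p,q}_{c,L^2}(\Omega)$ are Hausdorff, and these are controlled by the \emph{long} exact sequence \eqref{eq-long-hole-min} with $s=0$. There the relevant map goes $H^{p,q}_{c,L^2}(\Omega)\to H^{p,q}_{c,L^2}(\Omega_1)$, a continuous injection for $2\le q\le n$ once one knows $H^{p,q-1}_{W^1}(\Omega_2)=H^{p,q}_{W^1}(\Omega_2)=0$ (Kohn's theorem, which is where the $\mathcal{C}^{1,1}$ regularity of $b\Omega_2$ is actually used), and now Hausdorffness is pulled \emph{back} from the target $H^{p,q}_{c,L^2}(\Omega_1)$ --- which is Hausdorff by duality from the hypothesis on $\Omega_1$ --- in the legitimate direction; the cases $q=0,1$ are handled separately. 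Only after this is the short exact sequence at $s=-1$ brought in, with all three terms known to be Hausdorff, to produce the orthogonal splitting. Your identification of the third term via Corollary~\ref{cor-l2coh-compact}, your treatment of $q=n$, and the final orthogonal decomposition are all fine, but without the duality-plus-long-exact-sequence step the Hausdorff assertion, and hence the naturality and orthogonality of the splitting in degree $n-1$, is unproven.
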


	\begin{proof} First we show that the groups $H^{p,q}_{L^2}(\Omega)$ are all Hausdorff. Thanks to Proposition~\ref{prop-serre-laufer}, it suffices to show that the groups
		$H^{p,q}_{c, L^2}(\Omega)$ are Hausdorff for all $0\leq q \leq n$.
		
		Since the groups $H^{p,q}_{L^2}(\Omega_1)$ are all Hausdorff,  Proposition~\ref{prop-serre-laufer}
		shows that all the groups $H^{p,q}_{c, L^2}(\Omega_1)$ are also Hausdorff.   Also, by Theorem~\ref{thm-hormander-kohn}, we have for the hole that $H^{p,q}_{W^1}(\Omega_2)=0$ when $q\geq 1$.
		
		Now, from \eqref{eq-long-hole-min}, we have the exact fragment of semi-inner-product spaces
		\[ H^{p,q-1}_{W^1}(\Omega_2)
		\xrightarrow{\ell^{p,q-1}}H^{p,q}_{c, L^2}(\Omega) \xrightarrow{(\eh_*)^{p,q}}H^{p,q}_{c, L^2}(\Omega_1) \xrightarrow{S^{p,q}}H^{p,q}_{W^1}(\Omega_2)\]
		which reduces, for $2 \leq q \leq n$ to
		\[ 0 \to H^{p,q}_{c, L^2}(\Omega) \xrightarrow{(\eh_*)^{p,q}}H^{p,q}_{c, L^2}(\Omega_1) \to 0.\]
		Since $H^{p,q}_{c, L^2}(\Omega_1)$ is Hausdorff and $(\eh_*)^{p,q}$ is a continuous injective map, it follows that
		$H^{p,q}_{c, L^2}(\Omega)$ is Hausdorff for $2\leq q \leq n$.
		
%		As a result, $(\eh_*)^{p,q}$ is a continuous bijection of Hilbert spaces, and therefore by the closed-graph theorem, an isomorphism of the Hilbert spaces (as topological vector spaces). The adjoint map $ ((\eh_*)^{p,q})': (H^{p,q}_{c, L^2}(\Omega_1))'\to (H^{p,q}_{c, L^2}(\Omega))'$
%		is also an isomorphism.  Since we have the Hausdorff property  in all degrees, the dual $(H^{p,q}_{c, L^2}(\Omega_1))'$ can be identified via Serre duality with $H^{n-p,n-q}_{L^2}(\Omega_1)$.
%		
		Applying  Proposition~\ref{prop-serre-laufer}	 (with $s=0$) to ${H^{n-p,n}_{L^2}(\Omega)}=0$ shows that $H^{p,1}_{c, L^2}(\Omega)$ is Hausdorff.
		
		Finally Part~\ref{part-6} of Proposition~\ref{prop-maxrealproperties} shows that $H^{p,0}_{c, L^2}(\Omega)=0$ and is therefore Hausdorff.
		
		Letting $s=-1$ in \eqref{eq-short-main}, we note that each term of the following exact sequence is Hausdorff for each $q$:
		\[0\to H^{p,q}_{L^2}(\Omega_1) \xrightarrow{{R}^{p,q}_*}H^{p,q}_{L^2}(\Omega)
			\xrightarrow{\lambda^{p,q}}H^{p,q+1}_{c,W^{-1}}(\Omega_2)\to 0, \]
			where $H^{p,q+1}_{c,W^{-1}}(\Omega_2)$ is Hausdorff from Corollary~\ref{cor-l2coh-compact}. Therefore, from the results of Section~\ref{sec-shortexact}, we conclude that
			there is an orthogonal direct sum decomposition of Hilbert spaces
			\[ H^{p,q}_{L^2}(\Omega)= {R}^{p,q}_*(H^{p,q}_{L^2}(\Omega_1)) \oplus (\lambda^{p,q})^\perp \left(H^{p,q+1}_{c,W^{-1}}(\Omega_2)\right)\]
			which gives the result on noting the value of $H^{p,q+1}_{c,W^{-1}}(\Omega_2)$ given by Corollary~\ref{cor-l2coh-compact}.
		\end{proof}

	\subsection{The $\dbar$-problem with mixed boundary conditions.}
	An important special case of the construction of Section~\ref{sec-mixed} is when the realization $\db$ on $\Omega_1$ is the usual maximal $W^s$-realization. In this case, we obtain a mixed realization
	$A^{p,*}_{(\db,c), W^s}(\Omega)$ which coincides with the maximal  $W^s$-realization along $b\Omega_1$ and with the minimal $W^s$-realization along $b\Omega_2$. We denote the corresponding
	cohomology groups by $H^{p,q}_{\mathrm{mix}, W^s}(\Omega)$. Theorem~\ref{thm-long1} now implies that the sequence of semi-inner-product spaces and continuous maps
		\begin{equation}\label{eq-long-hole-max} \cdots \xrightarrow{{S}^{p,q-1}} H^{p,q-1}_{W^{s+1}}(\Omega_2)
		\xrightarrow{\ell^{p,q-1}}H^{p,q}_{\mathrm{mix},W^s}(\Omega) \xrightarrow{(\eh_*)^{p,q}}H^{p,q}_{W^s}(\Omega_1) \xrightarrow{{S}^{p,q}}H^{p,q}_{W^{s+1}}(\Omega_2)
		\xrightarrow{\ell^{p,q}}\cdots\end{equation}
	is exact. The interested reader can easily deduce the  results obtained in \cite{lishaw} regarding solution of the $\dbar$-problem with mixed boundary conditions  from \eqref{eq-long-hole-max} by making the appropriate vanishing assumptions on the $L^2$-cohomologies of the hole and envelope.

\section{Solving $\dbar$  with prescribed support}\label{sec-prescribed}
\subsection{Application of the Hartogs phenomenon} Combining Hartogs phenomenon with the exact sequence \eqref{eq-short-main} leads to the following proposition and its
corollary. Special cases  of these were noted in \cite{meichichristine}:
\begin{prop}\label{prop-connected-complement}
	Let $D\subset\cx^n$ be a bounded domain such that $\cx^n\setminus D$ is connected. If $n\geq 2$ then for $0\leq p \leq n$ we have for each integer $s$ that
	\[ H^{p,1}_{c, W^s}(D)=0.\]
\end{prop}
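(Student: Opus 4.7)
The plan is to apply the short exact sequence of Theorem~\ref{thm-short} in degree $q=0$ to an annulus whose envelope is a large ball and whose hole is $D$, and combine it with the classical Hartogs extension phenomenon. First I would fix an open ball $B\subset\cx^n$ with $\ol{D}\Subset B$, chosen large enough that $B\setminus\ol{D}$ is connected; the hypothesis that $\cx^n\setminus D$ is connected together with $n\geq 2$ (so that $bB$ is a connected sphere disjoint from $\ol{D}$) guarantees such a $B$ exists, since any two points of $B\setminus\ol{D}$ can be joined by a path in $\cx^n\setminus D$ that is pushed back into $B$ using paths on $bB$. Then $(\Omega_1,\Omega_2,\Omega)=(B,D,B\setminus\ol{D})$ is a bounded annulus in the sense of the paper, and Theorem~\ref{thm-short} with $q=0$ yields the exact sequence
\[0 \to H^{p,0}_{W^{s+1}}(B) \xrightarrow{R^{p,0}_*} H^{p,0}_{W^{s+1}}(B\setminus\ol{D}) \xrightarrow{\lambda^{p,0}} H^{p,1}_{c,W^s}(D) \to 0.\]
Since $H^{p,0}_{W^{s+1}}(\Omega)=Z^{p,0}_{W^{s+1}}(\Omega)$ consists of $p$-forms holomorphic on $\Omega$ with $W^{s+1}$ coefficients, the desired vanishing $H^{p,1}_{c,W^s}(D)=0$ is equivalent to showing that $R^{p,0}_*$ is surjective.

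To establish this surjectivity, I would take $f\in Z^{p,0}_{W^{s+1}}(B\setminus\ol{D})$ and produce a holomorphic extension of $f$ to all of $B$ whose coefficients lie in $W^{s+1}(\ol{B})$. Because $n\geq 2$ and $B\setminus\ol{D}$ is connected, the classical Hartogs extension theorem provides a (unique) holomorphic $p$-form $F$ on $B$ agreeing with $f$ on $B\setminus\ol{D}$. The main technical point is to check that $F\in W^{s+1}_{p,0}(\ol{B})$. Since $F$ is holomorphic, hence real-analytic, on $B$, one has $F\in W^{s+1}_{\mathrm{loc}}(B)$ by interior regularity; on the other hand, near $bB$ the form $F$ coincides with $f$, which by assumption already has $W^{s+1}$ regularity up to the boundary. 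I would combine these via a partition of unity: fix $\chi\in\mathcal{D}(B)$ with $\chi\equiv 1$ on a neighborhood of $\ol{D}$, and write $F=\chi F+(1-\chi)F$. The first piece is a smooth form compactly supported in $B$, so it lies in $W^{s+1}_{p,0}(\ol{B})$; the second piece equals $(1-\chi)f$ on $B\setminus\ol{D}$ and vanishes on a neighborhood of $\ol{D}$, so extending it by zero across $\ol{D}$ produces an element of $W^{s+1}_{p,0}(\ol{B})$ as well. Thus $F\in W^{s+1}_{p,0}(\ol{B})$, and $R^{p,0}_*$ is surjective, forcing $H^{p,1}_{c,W^s}(D)=0$ by exactness.

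The principal obstacle in this plan is the Sobolev-level verification of the Hartogs extension: the classical extension theorem lives purely in the holomorphic category and does not automatically respect boundary Sobolev regularity. The partition-of-unity decomposition above reduces this to the interior regularity of holomorphic functions on $B$ and the hypothesized $W^{s+1}$ regularity of $f$ near $bB$, both of which are standard. A secondary mild technicality is the topological claim that $B$ can be enlarged so that $B\setminus\ol{D}$ is connected under the sole hypothesis that $\cx^n\setminus D$ is connected, which requires some care when $bD$ is pathological but follows from the connectedness of the sphere $bB$ for $n\geq 2$.
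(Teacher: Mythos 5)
Your proposal follows the same route as the paper's own proof: realize $D$ as the hole of an annulus whose envelope is a large ball $B$, apply the exact sequence \eqref{eq-short-main} with $q=0$, and use the Hartogs phenomenon to show that $R^{p,0}_*$ is surjective, so that $\lambda^{p,0}=0$ and exactness at $H^{p,1}_{c,W^s}(D)$ forces the vanishing. The one genuine addition you make is the verification that the Hartogs extension $F$ actually lies in $W^{s+1}_{p,0}(\ol{B})$: the paper simply asserts that $R^{p,0}_*$ is an isomorphism ``being simply the restriction map of holomorphic forms,'' whereas your cutoff decomposition $F=\chi F+(1-\chi)F$, with the second piece identified with $(1-\chi)\,Ef$ restricted to $B$, supplies the missing Sobolev bookkeeping correctly. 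That is a worthwhile detail to record, not a different method.

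The one place where your justification is not airtight is the topological step, and here you are in the same position as the paper, which merely asserts ``Notice that $\Omega$ is connected.'' Your argument takes a path in $\cx^n\setminus D$ joining two points of $B\setminus\ol{D}$, but such a path may pass through $\ol{D}\setminus D\subset bD$, and then it cannot be deformed into $B\setminus\ol{D}$. What the Hartogs step genuinely needs is connectedness of $\cx^n\setminus\ol{D}$, which does not follow from connectedness of $\cx^n\setminus D$ when $bD$ is wild: for example, if $D$ is the shell $\{1<|z|<2\}$ in $\cx^2$ with a closed totally real ray from the origin to infinity removed, then $D$ is a bounded domain and $\cx^2\setminus D$ is connected (the ray joins the inner ball to the unbounded component), yet $\ol{D}$ is the full closed shell, so $B\setminus\ol{D}$ is disconnected for every ball $B\supset\ol{D}$; in that case the class of $\dbar\chi$, for $\chi$ a cutoff equal to $1$ on $\{|z|\le 1\}$ and supported in $\{|z|<2\}$, is a nonzero element of $H^{0,1}_{c,L^2}(D)$. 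So the hypothesis really ought to be that $\cx^n\setminus\ol{D}$ is connected (or that $D=\operatorname{int}\ol{D}$, say); under that reading your proof, like the paper's, goes through.
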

\begin{proof}
 Let $\Omega_1$ be a large ball such that $D\Subset \Omega_1$. Denoting $D$ also by $\Omega_2$,  let $\Omega= \Omega_1\setminus \ol{\Omega_2}$ be the annulus of which
$\Omega_1$ is the envelope and $D=\Omega_2$ is the hole. Notice that $\Omega$ is connected, so by the Hartogs phenomenon, the map
$R^{p,0}_*: H^{p,0}_{W^{s+1}}(\Omega_1)\to H^{p,0}_{W^{s+1}}(\Omega)$ is an isomorphism, being simply the restriction map of holomorphic forms.
The exact sequence \eqref{eq-short-main} with $q=0$ reduces
to
\[0\to H^{p,0}_{W^{s+1}}(\Omega_1)\xrightarrow{\mathrm{isom}}H^{p,0}_{W^{s+1}}(\Omega)\xrightarrow{0}H^{p,1}_{c,W^s}(D)\to 0 \]
where the surjectivity of $R^{p,0}$ implies that $\lambda^{p,0}=0$.
Exactness at $H^{p,1}_{c, W^s}(D)$ implies that the zero map into this space is surjective, which gives us $H^{p,1}_{c, W^s}(D)=0$.
\end{proof}
\begin{cor}Under the above hypotheses, if $D$ is Lipschitz,
	$H^{p, n-1}_{W^s}(D)$ either vanishes, or is an infinite dimensional indiscrete (and therefore non-Hausdorff) space.
\end{cor}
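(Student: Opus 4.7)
The plan is to deduce the dichotomy from Serre duality and the vanishing already established in Proposition~\ref{prop-connected-complement}, and then to rule out the nonzero finite-dimensional case by an open mapping argument. The key input that makes the argument go through is that $B^{p,n-1}_{W^s}(D)$ is not merely a dense subspace of $Z^{p,n-1}_{W^s}(D)$ but is in fact the range of a bounded operator between Hilbert spaces.

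First, I would invoke Proposition~\ref{prop-serre-laufer}(2), which uses the Lipschitz hypothesis, with the bidegree $(n-p,1)$ and Sobolev order $-s$: the Serre pairing identifies $\Red H^{p,n-1}_{W^s}(D)$ with the topological dual of $\Red H^{n-p,1}_{c,W^{-s}}(D)$. Proposition~\ref{prop-connected-complement}, applied with $p$ replaced by $n-p$ and $s$ by $-s$, gives $H^{n-p,1}_{c,W^{-s}}(D)=0$, and therefore $\Red H^{p,n-1}_{W^s}(D)=0$. Equivalently, $H^{p,n-1}_{W^s}(D)=\Ind H^{p,n-1}_{W^s}(D)$, so the group carries the indiscrete topology; concretely, the space of coboundaries $B^{p,n-1}_{W^s}(D)$ is norm-dense in the Hilbert space of cocycles $Z^{p,n-1}_{W^s}(D)$, which is a Hilbert space by Proposition~\ref{prop-maxrealproperties}(\ref{part-2}).

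Next, assuming $H^{p,n-1}_{W^s}(D)\neq 0$ is finite dimensional of dimension $k\geq 1$, I would pick cocycles $z_1,\dots,z_k\in Z^{p,n-1}_{W^s}(D)$ whose classes form a basis and consider the bounded linear map
\[ S:A^{p,n-2}_{W^s}(D)\oplus\cx^k\longrightarrow Z^{p,n-1}_{W^s}(D),\qquad S(u,\lambda_1,\dots,\lambda_k)=\dbar u+\sum_{i=1}^k\lambda_i z_i, \]
which is a continuous surjection between Hilbert spaces. By the open mapping theorem, $S$ descends to a topological isomorphism of $(A^{p,n-2}_{W^s}(D)\oplus\cx^k)/\ker S$ onto $Z^{p,n-1}_{W^s}(D)$. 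Since the classes $[z_i]$ are linearly independent modulo $B^{p,n-1}_{W^s}(D)$, one checks at once that $\ker S\subset A^{p,n-2}_{W^s}(D)\oplus\{0\}$, so the closed subspace $A^{p,n-2}_{W^s}(D)\oplus\{0\}$ maps under this isomorphism precisely onto $B^{p,n-1}_{W^s}(D)$. Hence $B^{p,n-1}_{W^s}(D)$ would be closed in $Z^{p,n-1}_{W^s}(D)$, and together with density this gives $B^{p,n-1}_{W^s}(D)=Z^{p,n-1}_{W^s}(D)$, contradicting $H^{p,n-1}_{W^s}(D)\neq 0$.

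The main obstacle is precisely this last open-mapping step: in a general topological vector space a dense proper subspace may well have finite algebraic codimension (take the kernel of a discontinuous linear functional on a Hilbert space), so the finite-dimensional alternative is not ruled out by abstract topology alone. Using the Banach structure on the source together with the open mapping theorem is what converts finite codimension plus density into closedness, and hence into the desired contradiction. Combining the two cases yields the stated dichotomy for $H^{p,n-1}_{W^s}(D)$.
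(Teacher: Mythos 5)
Your proof is correct and follows the paper's own route: Serre duality (Proposition~\ref{prop-serre-laufer}, part (2), which is where the Lipschitz hypothesis enters) applied to the vanishing $H^{n-p,1}_{c,W^{-s}}(D)=0$ supplied by Proposition~\ref{prop-connected-complement} yields $\Red H^{p,n-1}_{W^s}(D)=0$, i.e.\ the group is indiscrete. Your open-mapping argument excluding the nonzero finite-dimensional case is a correct and welcome justification of a step the paper leaves implicit: a finite-codimensional subspace of a Hilbert space that is the continuous image of a Hilbert space is closed, so finite-dimensional cohomology would be Hausdorff, which together with indiscreteness forces it to vanish.
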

\begin{proof}
 Since $H^{n-p,1}_{c, W^{-s}}(D)=0$, 	by Proposition~\ref{prop-serre-laufer},
	 we have that $\Red H^{p, n-1}_{W^s}(D)=0$.
\end{proof}
\subsection{Application of vanishing theorems} In this section, we will combine our algebraic approach with the following known vanishing result on annuli:
\begin{thm}[Shaw \cite{shawannulus1985}]\label{thm-mei-chi}
	{Let $\Omega=\Omega_1\setminus\ol{ \Omega_2}$ be a bounded annulus with $\mathcal{C}^k$ boundary, $k\geq 2$, determined by a pseudoconvex envelope $\Omega_1$
	and a pseudoconvex hole $\Omega_2$ in $\cx^n$, where $n\geq 2$.  Then for each $0<s\leq k-1$  we have the following
	\[ H^{p,q}_{W^s} (\Omega)= \begin{cases} \text{Hausdorff and infinite dimensional} & \text{if  } q=0\\
	0 & \text{if } 1\leq q \leq n-2\\
	\text{infinite dimensional} & \text{if } q=n-1\\
	0 & \text{if } q=n.
	\end{cases} \]}
\end{thm}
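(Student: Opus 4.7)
The plan is to apply Theorem~\ref{thm-short} with its Sobolev parameter taken equal to $s-1$, which reduces the $W^s$-cohomology of the annulus to the $W^s$-cohomology of the pseudoconvex envelope and the compactly supported $W^{s-1}$-cohomology of the pseudoconvex hole. The hypotheses $s\leq k-1$ and $\mathcal{C}^k$-boundary guarantee that $\Omega_1$ is sufficiently smooth for its $W^s$-theory and $\Omega_2$ for its $W^{s-1}$-theory (the case $s=1$ requires only $\mathcal{C}^{1,1}$, which is implied by $\mathcal{C}^2$). Theorem~\ref{thm-short} then yields, for each $0\leq q\leq n$, the short exact sequence
\[
0\to H^{p,q}_{W^s}(\Omega_1)\xrightarrow{R^{p,q}_*} H^{p,q}_{W^s}(\Omega)\xrightarrow{\lambda^{p,q}} H^{p,q+1}_{c,W^{s-1}}(\Omega_2)\to 0.
\]

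By Theorem~\ref{thm-hormander-kohn}, $H^{p,q}_{W^s}(\Omega_1)=0$ for $q\geq 1$, while $H^{p,0}_{W^s}(\Omega_1)$ is the infinite-dimensional Hilbert space of holomorphic $p$-forms with $W^s$ coefficients. Substituting this into the exact sequence produces, for each $q\geq 1$, a continuous bijection $\lambda^{p,q}: H^{p,q}_{W^s}(\Omega)\to H^{p,q+1}_{c,W^{s-1}}(\Omega_2)$, so the entire theorem reduces to evaluating the right-hand side. The case $q=n$ is immediate because $H^{p,n+1}_{c,W^{s-1}}(\Omega_2)=0$ for degree reasons. The case $q=0$ is equally easy: $H^{p,0}_{W^s}(\Omega)$ equals the kernel of the closed operator $\dbar$ on $W^s_{p,0}(\overline{\Omega})$ and is therefore Hausdorff, and the injection $R^{p,0}_*$ from an infinite-dimensional Hilbert space forces it to be infinite dimensional.

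It remains to evaluate $H^{p,j}_{c,W^{s-1}}(\Omega_2)$ for $2\leq j\leq n$ on the pseudoconvex hole. When $s=1$ this is exactly Corollary~\ref{cor-l2coh-compact}, which gives vanishing for $j\leq n-1$ and identifies $H^{p,n}_{c,L^2}(\Omega_2)$ with the dual of the infinite-dimensional Bergman space $H^{n-p,0}_{L^2}(\Omega_2)$. For $s\geq 2$, the corresponding positive-Sobolev statement---vanishing of $H^{p,j}_{c,W^{s-1}}(\Omega_2)$ for $j\leq n-1$ together with infinite dimensionality for $j=n$---will be supplied by the minimal-cohomology vanishing of Proposition~\ref{prop-prescribed2}. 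Plugging these values into the isomorphism $\lambda^{p,q}$ then gives $H^{p,q}_{W^s}(\Omega)=0$ for $1\leq q\leq n-2$ and $H^{p,n-1}_{W^s}(\Omega)$ infinite dimensional, completing all four cases.

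The hard part is precisely the $s\geq 2$ vanishing $H^{p,j}_{c,W^{s-1}}(\Omega_2)=0$ for $j\leq n-1$ on the smoothly bounded pseudoconvex hole. Serre duality (Proposition~\ref{prop-serre-laufer}) would convert this into a negative-Sobolev vanishing on $\Omega_2$, which is not covered by classical Kohn--H\"ormander theory, so a direct construction is required: extend a compactly supported $\dbar$-closed form by zero to a larger pseudoconvex neighborhood, solve $\dbar$ there via Kohn's $W^{s-1}$-theorem, then use the connectedness of $\cx^n\setminus\overline{\Omega_2}$ (true for bounded pseudoconvex $\Omega_2$ in $\cx^n$ with $n\geq 2$) together with a Hartogs-style support correction to obtain a solution with support in $\overline{\Omega_2}$. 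This is the genuine analytic input that the exact-sequence machinery cannot produce on its own, and it is exactly where the boundary-regularity hypothesis $s\leq k-1$ is used.
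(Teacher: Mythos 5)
There is a genuine gap: your argument is circular for $s\geq 2$. The input you invoke to evaluate $H^{p,j}_{c,W^{s-1}}(\Omega_2)$ for $2\leq j\leq n$ on the pseudoconvex hole is Proposition~\ref{prop-prescribed2} (you presumably mean Proposition~\ref{prop-prescribed1}, since $\Omega_2$ is pseudoconvex, not an annulus), but in the paper the proof of Proposition~\ref{prop-prescribed1} explicitly cites Theorem~\ref{thm-mei-chi} twice — once for the vanishing $H^{p,q-1}_{W^{s+1}}(\Omega)=0$ in the range $2\leq q\leq n-1$ and once for the infinite dimensionality at $q=n$ — and Proposition~\ref{prop-prescribed2} is in turn deduced from Proposition~\ref{prop-prescribed1}. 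The ``direct construction'' you sketch as a fallback does not break the circle either: after zero-extending $f\in Z^{p,j}_{c,W^{s-1}}(\Omega_2)$ to a ball $B$ and solving there by Kohn's theorem, the restriction of the solution to $B\setminus\ol{\Omega_2}$ is a $\dbar$-closed $(p,j-1)$-form with $j-1\geq 1$, so the Hartogs-style support correction (which is a statement about holomorphic \emph{functions} on the connected complement) is unavailable; correcting the support in degree $j-1\geq 1$ requires precisely the vanishing $H^{p,j-1}_{W^{s}}(B\setminus\ol{\Omega_2})=0$ for $1\leq j-1\leq n-2$, i.e., an instance of the very theorem you are proving, applied to the annulus with ball envelope and hole $\Omega_2$. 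Indeed, via the exact sequence \eqref{eq-short-main} the prescribed-support solvability on the hole in middle degrees is \emph{equivalent} to the middle-degree vanishing on such an annulus, so no amount of rearranging the homological machinery produces it; independent analytic input is required.

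That input is what the paper's proof supplies by citation: Shaw's weighted $\dbar$-Neumann theory \emph{on the annulus itself} (existence of a $W^s$-continuous solution operator on the complement of the harmonic space, and the vanishing of the harmonic space in degrees $1\leq q\leq n-2$), together with the induction on the Sobolev index from \cite{harrington2009} and \cite{DebrajPhil} to reach $0<s\leq k-1$ under the $\mathcal{C}^k$ boundary hypothesis. Your reduction is not without value: for $s=1$ the needed hole data is $H^{p,j}_{c,L^2}(\Omega_2)$, which Corollary~\ref{cor-l2coh-compact} delivers non-circularly from H\"ormander's theorem and $L^2$ Serre duality, so that case of your argument is sound; and the $q=0$ and $q=n$ cases are fine throughout. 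But for $s\geq 2$ the dual statement is a negative-index Sobolev vanishing on $\Omega_2$ that classical Kohn--H\"ormander theory does not cover (the paper only obtains it, as Corollary~\ref{cor-negative1}, downstream of Theorem~\ref{thm-mei-chi}), so the core of the theorem in the range $1\leq q\leq n-1$ remains unproven in your proposal.
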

\begin{proof}
  When the boundary of $\Omega$ is smooth, this follows from \cite[ Theorems 3.1 and 3.2]{shawclosed}.  In particular, \cite[ Theorems 3.1]{shawclosed}  demonstrates that a solution operator continuous in $W^s(\Omega)$ exists on the orthogonal complement of the space of harmonic forms (see also \cite{shawannulus1985, hormander2004}), and
  \cite[ Theorem 3.2]{shawclosed}  demonstrates that the space of harmonic forms vanishes.  We note that Shaw makes use of the space of harmonic forms for the $\dbar$-Neumann operator with weights, but the dimension of this space is independent of the weight (see the final sentence of  \cite[Theorem 3.19]{kohn73}, for example).  In particular, the space of harmonic forms can be identified with the orthogonal complement of the range of $\dbar$ in $\ker\dbar$, and since neither of these spaces depends on the weight, the dimension of the orthogonal complement is also independent of the weight.

  For $k\geq 2$, this will follow from the induction procedure detailed in  \cite[Lemma 3 ]{harrington2009}.  Although the results in \cite{harrington2009} are only stated for pseudoconvex domains, the induction argument will work provided that the base case is proven, and this will follow from \cite{DebrajPhil}.
\end{proof}

\noindent\textbf{Remark:} It does not seem to be known whether $H^{p,n-1}_{W^s} (\Omega)$ is Hausdorff if $s\geq 1$. There is good reason to suspect that, as in the case $s=0$, this group is
Hausdorff, and it would be interesting to verify this claim. However, we will see below that conventional techniques based on the $\dbar$-Neumann problem
are unlikely to be successful in answering this question.

\begin{prop}\label{prop-prescribed1}
Let $D\subset\cx^n, n\geq 1$,
	 be a bounded pseudoconvex domain with $C^{s+2}$ boundary for $s\geq 0$, and let $0\leq p \leq n$. Then
\[
H^{p,q}_{c, W^s}(D)= \begin{cases}
0 & \text{ if }0\leq q \leq n-1,\\
\text{infinite dimensional} & \text{ if } q=n.
\end{cases}
\]

\end{prop}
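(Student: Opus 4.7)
My plan is to realize $D$ as the hole of a suitable annulus and read the result off from the short exact sequence in Theorem~\ref{thm-short}. Fix a ball $B$ with $\ol{D} \Subset B \Subset \cx^n$ and form the annulus $\Omega = B \setminus \ol{D}$: the envelope $B$ is pseudoconvex with smooth boundary, the hole $D$ is pseudoconvex of class $\mathcal{C}^{s+2}$, and $\Omega$ inherits $\mathcal{C}^{s+2}$ boundary overall. In particular, Shaw's vanishing theorem (Theorem~\ref{thm-mei-chi}) applies to $\Omega$ at Sobolev order $s+1$, since the constraint $0 < s+1 \leq (s+2)-1$ is exactly met. Theorem~\ref{thm-short} then gives, for each $q$, the short exact sequence
\[
0 \to H^{p,q}_{W^{s+1}}(B) \xrightarrow{R^{p,q}_*} H^{p,q}_{W^{s+1}}(\Omega) \xrightarrow{\lambda^{p,q}} H^{p,q+1}_{c,W^{s}}(D) \to 0.
\]

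For the vanishing statement, $H^{p,0}_{c,W^s}(D) = 0$ is part~\ref{part-6} of Proposition~\ref{prop-maxrealproperties}. When $n \geq 2$, I would take $q = 0$ in the sequence and invoke the Hartogs phenomenon to conclude that $R^{p,0}_*$ is surjective (every holomorphic $p$-form on $\Omega$ extends across $\ol{D}$), whence $H^{p,1}_{c,W^s}(D) = 0$ by exactness. For $1 \leq q \leq n-2$, Theorem~\ref{thm-hormander-kohn} gives $H^{p,q}_{W^{s+1}}(B) = 0$ and Theorem~\ref{thm-mei-chi} gives $H^{p,q}_{W^{s+1}}(\Omega) = 0$, collapsing the sequence to force $H^{p,q+1}_{c,W^s}(D) = 0$. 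Together these exhaust the range $0 \leq q \leq n-1$.

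For the infinite-dimensionality of $H^{p,n}_{c,W^s}(D)$ with $n \geq 2$, I would take $q = n-1$: Theorem~\ref{thm-hormander-kohn} again yields $H^{p,n-1}_{W^{s+1}}(B) = 0$, so $\lambda^{p,n-1}$ is a linear bijection of $H^{p,n-1}_{W^{s+1}}(\Omega)$ onto $H^{p,n}_{c,W^s}(D)$, and the former group is infinite-dimensional by Theorem~\ref{thm-mei-chi}. For $n = 1$, the same sequence at $q = 0$ identifies $H^{p,1}_{c,W^s}(D)$ with the cokernel of the injection $H^{p,0}_{W^{s+1}}(B) \hookrightarrow H^{p,0}_{W^{s+1}}(\Omega)$; for any fixed $z_0 \in D$, the Laurent functions $(z-z_0)^{-k}$ for $k \geq 1$ (wedged with $dz$ when $p = 1$) are smooth on $\ol{\Omega}$ but fail to extend holomorphically across $z_0 \in B$, supplying an infinite linearly independent family in the quotient.

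The main obstacle is purely bookkeeping: one must align the boundary regularity of $D$ with the precise hypothesis of Theorem~\ref{thm-mei-chi} so that Shaw's result applies at the shifted Sobolev order $s+1$, which is the sole reason the proposition demands $\mathcal{C}^{s+2}$ rather than a weaker regularity. Everything else reduces to exactness of \eqref{eq-short-main} together with the classical Hörmander--Kohn vanishing on pseudoconvex domains.
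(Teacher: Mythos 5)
Your proposal is correct and follows essentially the same route as the paper: realize $D$ as the hole of an annulus with a large ball as envelope, and read off each case of \eqref{eq-short-main} using Theorem~\ref{thm-hormander-kohn} for the ball, Theorem~\ref{thm-mei-chi} for the annulus (with the same verification that $\mathcal{C}^{s+2}$ boundary permits the shifted order $s+1$), the Hartogs phenomenon for $q=1$ when $n\geq 2$, and the Laurent family $(z-z_0)^{-k}$ for $n=1$. The only cosmetic difference is that the paper routes the $q=1$, $n\geq2$ case through Proposition~\ref{prop-connected-complement}, whose proof is exactly your Hartogs argument.
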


\begin{rem}
  We note that for $q\geq 2$, this can not be proven using standard $\dbar$-Neumann techniques, as we will demonstrate in Section \ref{sec:canonical_solution_prescribed_support}.
\end{rem}

\begin{proof}
		 For $q=0$, this has already been proven in part~\ref{part-6} of Proposition~\ref{prop-maxrealproperties} above.

		 The case
$n\geq 2$ and $ q=1$ follows immediately from Proposition~\ref{prop-connected-complement}.

 For the remaining cases, let $\Omega_1$ be a large ball such that $D\Subset \Omega_1$. Denote $D$ by $\Omega_2$ also,  and let $\Omega= \Omega_1\setminus \ol{\Omega_2}$ be the  annulus determined by $\Omega_1$ and $\Omega_2$.  The exactness  of \eqref{eq-short-main} holds for this annulus.

For the case $n=q=1$, we use \eqref{eq-short-main} (with $q=0$) to obtain an isomorphism of vector spaces
\[ \frac{H^{p,0}_{W^{s+1}}(\Omega)}{R\left(H^{p,0}_{W^{s+1}}(\Omega_1)\right)}\cong H^{p,1}_{c,W^s}(D),\]
where $R$ is the restriction map on forms from $\Omega_1$ to $\Omega$.  The left hand side is  infinite dimensional since, for example, for each
$w\in D$, the images of  $z\mapsto (z-w)^{-k}$ with $k\geq 1$ form an infinite linearly independent family in it.  The result follows in
this case.

Now let $n\geq 2$ and $2\leq q\leq n-1$.
By Theorem~\ref{thm-mei-chi}, we have
	$ H^{p,q-1}_{W^{s+1}}(\Omega)=0,$
	and by  Theorem~\ref{thm-hormander-kohn}, we have that
	$H^{p,q-1}_{W^{s+1}}(\Omega_1)=0.$
	Therefore, from the exactness  of \eqref{eq-short-main} (replacing $q$ with $q-1$) at $H^{p,q}_{c,W^s}(D)$ we see that
	\[ H^{p,q}_{c,W^s}(D)=0 \text{ for }  2\leq q \leq n-1.\]
	
	For $q=n\geq 2$,  we use \eqref{eq-short-main} (with $q=n-1$) to obtain
	 	\[0\to H^{p,n-1}_{W^{s+1}}(\Omega) \xrightarrow{\lambda^{p,n-1}} H^{p,n}_{c,W^s}(D)\to 0.\]
	 	Here, we have again applied Theorem~\ref{thm-hormander-kohn}. Therefore, $\lambda^{p,n-1}$ is algebraically an isomorphism of
	 	vector spaces, and since, by Theorem~\ref{thm-mei-chi}, the space $H^{p,n-1}_{W^{s+1}}(\Omega)$ is infinite-dimensional,
	 	it follows that $H^{p,n}_{c,W^s}(D)$ is infinite-dimensional.

\end{proof}

We also have the following consequence of Proposition~\ref{prop-prescribed1}:
\begin{cor} \label{cor-negative1}
	Let $D\subset\cx^n, n\geq 1$,
	be a bounded pseudoconvex domain with $C^{s+2}$ boundary for $s\geq 0$, and let $0\leq p \leq n$. Then
	\[
	H^{p,q}_{W^{-s}}(D)= 0, \quad \text{for} \quad 2\leq q \leq n.
	\]
\end{cor}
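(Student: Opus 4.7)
The plan is to deduce Corollary~\ref{cor-negative1} directly from Proposition~\ref{prop-prescribed1} by applying Serre duality in the Sobolev setting, namely Proposition~\ref{prop-serre-laufer}. First I would observe that a domain with $\mathcal{C}^{s+2}$ boundary (with $s \geq 0$) is in particular Lipschitz, so that the hypotheses of Proposition~\ref{prop-serre-laufer} are met.

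The core idea is that Proposition~\ref{prop-prescribed1} gives vanishing of $H^{p,q}_{c,W^s}(D)$ for all $0 \leq q \leq n-1$ and for \emph{every} $0 \leq p \leq n$, and Serre duality converts vanishing of minimal cohomologies in positive Sobolev order into vanishing of maximal cohomologies in negative Sobolev order, in complementary bidegrees. Specifically, for the range $2 \leq q \leq n$, I would split into the two parts of the semi-inner-product space $H^{p,q}_{W^{-s}}(D) \cong \Red H^{p,q}_{W^{-s}}(D) \oplus \Ind H^{p,q}_{W^{-s}}(D)$, and show each summand vanishes.

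For the indiscrete part, part~(1) of Proposition~\ref{prop-serre-laufer} (applied with the Sobolev index $-s$ in place of $s$) gives $\Ind H^{p,q}_{W^{-s}}(D) = 0$ if and only if $\Ind H^{n-p,n-q+1}_{c,W^{s}}(D) = 0$. Since $2 \leq q \leq n$ forces $1 \leq n-q+1 \leq n-1$, Proposition~\ref{prop-prescribed1} applied to the complementary bidegree $(n-p,n-q+1)$ yields that the latter group is itself zero, so its indiscrete part certainly vanishes. For the reduced part, part~(2) of Proposition~\ref{prop-serre-laufer} gives a perfect pairing identifying $\Red H^{p,q}_{W^{-s}}(D)$ as the (continuous) dual of $\Red H^{n-p,n-q}_{c,W^{s}}(D)$. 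Since $2 \leq q \leq n$ forces $0 \leq n-q \leq n-2$, Proposition~\ref{prop-prescribed1} now gives $H^{n-p,n-q}_{c,W^{s}}(D)=0$, so its reduced form is zero, and by the duality so is $\Red H^{p,q}_{W^{-s}}(D)$.

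Combining the two vanishings yields $H^{p,q}_{W^{-s}}(D) = 0$ for $2 \leq q \leq n$, which is the claim. In this approach there is no real obstacle beyond a careful bookkeeping of indices and the verification that Proposition~\ref{prop-serre-laufer} applies with negative $s$; the substantive content has already been packaged into Propositions~\ref{prop-prescribed1} and \ref{prop-serre-laufer}. The only slightly subtle point is that one has to handle the reduced and indiscrete parts separately, because Serre duality as stated does not immediately give an isomorphism $H^{p,q}_{W^{-s}}(D) \cong (H^{n-p,n-q}_{c,W^{s}}(D))'$ without first knowing Hausdorffness — but that is exactly what the indiscrete-part statement supplies.
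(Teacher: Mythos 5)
Your proposal is correct and follows essentially the same route as the paper: both arguments feed the vanishing of $H^{\cdot,\cdot}_{c,W^s}(D)$ from Proposition~\ref{prop-prescribed1} into the two parts of Proposition~\ref{prop-serre-laufer}, using part (1) to kill the indiscrete part (equivalently, to get Hausdorffness) and part (2) to kill the reduced part via the perfect Serre pairing. The only difference is cosmetic index bookkeeping (the paper concludes about $H^{n-p,n-q+1}_{W^{-s}}(D)$ and then renames, whereas you work directly in the target bidegree), so there is nothing substantive to add.
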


\begin{proof}
	By  Proposition~\ref{prop-prescribed1}, $H^{p,q}_{c, W^s}(D)=0$ for $0\leq q \leq n-1$.
	From this we conclude that (i) $H^{n-p, n-q+1}_{W^{-s}}(D)$ is Hausdorff if $0\leq q \leq n-1$,
	 by part (1) of Proposition~\ref{prop-serre-laufer}, and that  (ii) $H^{p,q-1}_{c, W^s}(D)=0$ if $1\leq q\leq n$, by a simple change in index.
	 Applying part (2) of Proposition~\ref{prop-serre-laufer} to the statements (i) and (ii)  we conclude that $H^{n-p, n-q+1}_{W^{-s}}(D)=0$ if $1\leq q \leq n-1$, which on renaming the indices is precisely the claim of the corollary.

	Notice that we cannot
	say anything about the case $q=1$, since we do not have information about whether $H^{p,n-1}_{W^s} (\Omega)$ is Hausdorff if $s\geq 1$.
\end{proof}
We now solve the $\dbar$-problem with prescribed support in an annulus:

\begin{prop}\label{prop-prescribed2}
	Let $\Omega=\Omega_1\setminus \ol{\Omega}_2\subset \cx^ n$ be an annulus, and $s\geq 0$. Assume that $\Omega_1$ and $\Omega_2$
	are pseudoconvex with smooth boundaries.  Then
	\[ H^{p,q}_{c,W^s}(\Omega)=\begin{cases}0 &\text{ if } q=0\\
	\text{infinite dimensional}  & \text{ if } q=1\\
0 & \text{ if } 2\leq	 q \leq n-1 \\
\text{infinite dimensional}&\text{ if } q=n.
	\end{cases}\]

\end{prop}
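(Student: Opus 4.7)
The plan is to extract the needed vanishing and nonvanishing directly from the long exact sequence \eqref{eq-long-hole-min} of Corollary~\ref{cor-long-c}, feeding in Theorem~\ref{thm-hormander-kohn} on the pseudoconvex hole $\Omega_2$ and Proposition~\ref{prop-prescribed1} on the pseudoconvex envelope $\Omega_1$. The case $q=0$ is immediate from Proposition~\ref{prop-maxrealproperties}, part~\ref{part-6}, which gives $H^{p,0}_{c,W^s}(D)=0$ for any bounded $D$.

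For each of the remaining ranges of $q$, the idea is to isolate the five-term fragment
\[
H^{p,q-1}_{c,W^s}(\Omega_1)\xrightarrow{S^{p,q-1}} H^{p,q-1}_{W^{s+1}}(\Omega_2)\xrightarrow{\ell^{p,q-1}} H^{p,q}_{c,W^s}(\Omega)\xrightarrow{(\eh_*)^{p,q}} H^{p,q}_{c,W^s}(\Omega_1)\xrightarrow{S^{p,q}} H^{p,q}_{W^{s+1}}(\Omega_2)
\]
and identify which terms vanish. For $2\leq q\leq n-1$ (so in particular $n\geq 3$), Theorem~\ref{thm-hormander-kohn} applied to the smoothly pseudoconvex $\Omega_2$ gives $H^{p,q-1}_{W^{s+1}}(\Omega_2)=0$, and Proposition~\ref{prop-prescribed1} applied to $\Omega_1$ gives $H^{p,q}_{c,W^s}(\Omega_1)=0$; hence $H^{p,q}_{c,W^s}(\Omega)$ is squeezed between two zeros and vanishes. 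For $q=1$ with $n\geq 2$, the left end is $H^{p,0}_{c,W^s}(\Omega_1)=0$ (Proposition~\ref{prop-maxrealproperties}) and the right-hand neighbour $H^{p,1}_{c,W^s}(\Omega_1)=0$ by Proposition~\ref{prop-prescribed1}, so the sequence collapses to $0\to H^{p,0}_{W^{s+1}}(\Omega_2)\to H^{p,1}_{c,W^s}(\Omega)\to 0$, showing $H^{p,1}_{c,W^s}(\Omega)\cong H^{p,0}_{W^{s+1}}(\Omega_2)$, which is infinite-dimensional (holomorphic $p$-forms in a Sobolev space).

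For $q=n$ with $n\geq 2$, apply Theorem~\ref{thm-hormander-kohn} twice on $\Omega_2$ to kill both $H^{p,n-1}_{W^{s+1}}(\Omega_2)$ and $H^{p,n}_{W^{s+1}}(\Omega_2)$; the sequence reduces to an isomorphism $H^{p,n}_{c,W^s}(\Omega)\cong H^{p,n}_{c,W^s}(\Omega_1)$, which is infinite-dimensional by Proposition~\ref{prop-prescribed1}. The genuinely separate case is $n=1$, where $q=1=n$ is supposed to be infinite-dimensional: here $H^{p,1}_{c,W^s}(\Omega_1)$ is itself infinite-dimensional by Proposition~\ref{prop-prescribed1}, $H^{p,1}_{W^{s+1}}(\Omega_2)=0$, and $H^{p,0}_{c,W^s}(\Omega_1)=0$, giving the short exact sequence
\[
0\to H^{p,0}_{W^{s+1}}(\Omega_2)\xrightarrow{\ell^{p,0}} H^{p,1}_{c,W^s}(\Omega)\xrightarrow{(\eh_*)^{p,1}} H^{p,1}_{c,W^s}(\Omega_1)\to 0,
\]
so $H^{p,1}_{c,W^s}(\Omega)$ contains the infinite-dimensional subspace $H^{p,0}_{W^{s+1}}(\Omega_2)$.

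There is no real obstacle: once the long exact sequence is in hand, everything is bookkeeping. The only subtlety is to make sure the vanishing hypotheses on $\Omega_1$ and $\Omega_2$ needed for Theorem~\ref{thm-hormander-kohn} and Proposition~\ref{prop-prescribed1} are available at the chosen Sobolev index $s+1$; since both $\Omega_1$ and $\Omega_2$ are smoothly bounded and pseudoconvex, the sufficient smoothness condition is satisfied for every integer $s\geq 0$, so both inputs apply without restriction, and one also needs to handle the low-dimensional boundary cases ($n=1$ and $q=1=n$) separately as above.
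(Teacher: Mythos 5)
Your proof is correct and follows essentially the same route as the paper: the case $q=0$ from Proposition~\ref{prop-maxrealproperties}, and for the rest the five-term fragments of the long exact sequence \eqref{eq-long-hole-min} with Theorem~\ref{thm-hormander-kohn} killing the $\Omega_2$-terms and Proposition~\ref{prop-prescribed1} supplying the vanishing and infinite-dimensionality on $\Omega_1$. Your explicit treatment of the degenerate case $n=1$ (where $q=1=n$ and $H^{p,1}_{c,W^s}(\Omega_1)$ is no longer zero) is a small but welcome refinement over the paper's argument, which implicitly assumes $n\geq 2$ in the $q=1$ step.
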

\begin{proof}  The case $q=0$ has been established in Part (6) of Proposition~\ref{prop-maxrealproperties} above.
	
	For the case $q=1$, we consider the fragment of the exact sequence \eqref{eq-long-hole-min}
	\[H^{p,0}_{c,W^s}(\Omega_1) \xrightarrow{{S}^{p,0}}H^{p,0}_{W^{s+1}}(\Omega_2)\xrightarrow{\ell^{p,0}}H^{p,1}_{c,W^s}(\Omega) \xrightarrow{(\eh_*)^{p,1}}H^{p,1}_{c,W^s}(\Omega_1),\]
	where we have $H^{p,0}_{c,W^s}(\Omega_1)=H^{p,1}_{c,W^s}(\Omega_1)=0$ by Proposition~\ref{prop-prescribed1} above. Therefore the
	following sequence is exact:
	\[0 \to H^{p,0}_{W^{s+1}}(\Omega_2)\xrightarrow{\ell^{p,0}}H^{p,1}_{c,W^s}(\Omega)\to 0,\]
so that $\ell^{p,0}$ is a linear isomorphism. But $ H^{p,0}_{W^{s+1}}(\Omega_2)$ is  infinite dimensional which implies that so is $H^{p,1}_{c,W^s}(\Omega)$.

	For $2\leq q \leq n-1$, we have that {$H^{p,q-1}_{W^{s+1}}(\Omega_2)=0$} by {Theorem~\ref{thm-hormander-kohn}} and $H^{p,q}_{c,W^s}(\Omega_1)=0$
	by Proposition~\ref{prop-prescribed1}. Therefore, by the exactness of \eqref{eq-long-hole-min} at $H^{p,q}_{c,W^s}(\Omega)$ we see that
	the fragment $0\to H^{p,q}_{c,W^s}(\Omega)\to 0$ is exact, which means that $H^{p,q}_{c,W^s}(\Omega)=0$.
	
	For $q=n$, we have that the following is exact from \eqref{eq-long-hole-min}
	\[ \to H^{p,n-1}_{W^{s+1}}(\Omega_2)
	\xrightarrow{\ell^{p,n-1}}H^{p,n}_{c,W^s}(\Omega) \xrightarrow{(\eh_*)^{p,n}}H^{p,n}_{c,W^s}(\Omega_1) \xrightarrow{{S}^{p,n}}H^{p,n}_{W^{s+1}}(\Omega_2)\to.\]
Since $H^{p,n-1}_{W^{s+1}}(\Omega_2)=H^{p,n}_{W^{s+1}}(\Omega_2)=0$ by  {Theorem~\ref{thm-hormander-kohn}}, this reduces to
\[0\to H^{p,n}_{c,W^s}(\Omega) \xrightarrow{(\eh_*)^{p,n}}H^{p,n}_{c,W^s}(\Omega_1)\to 0, \]
which shows that $(\eh_*)^{p,n}$ is a continuous linear isomorphism of semi-Hilbert spaces. Since $H^{p,n}_{c,W^s}(\Omega_1)$ is infinite dimensional, it follows that $H^{p,n}_{c,W^s}(\Omega) $ is also infinite dimensional.
\end{proof}

Using an argument similar to that used for Corollary~\ref{cor-negative1} we can prove the following:
	
\begin{cor}\label{cor-negative2}
		Let $\Omega=\Omega_1\setminus \ol{\Omega}_2\subset \cx^ n$ be an annulus where {$n\geq 4$} and $s\geq 0$. Assume that $\Omega_1$ and $\Omega_2$
	are pseudoconvex with smooth boundaries.  Then
	\[   H^{p,q}_{W^{-s}}(\Omega)=0, \quad \text{ for } 2\leq q \leq n-2.\]
\end{cor}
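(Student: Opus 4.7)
The plan is to deduce the corollary from the freshly established Proposition~\ref{prop-prescribed2} by invoking the Sobolev Serre duality of Proposition~\ref{prop-serre-laufer}, in exact parallel with the derivation of Corollary~\ref{cor-negative1} from Proposition~\ref{prop-prescribed1}. Since $\Omega=\Omega_1\setminus\ol{\Omega}_2$ is smoothly bounded (and in particular Lipschitz), the hypotheses of Proposition~\ref{prop-serre-laufer} are satisfied. The task splits cleanly into showing that both the indiscrete part $\Ind H^{p,q}_{W^{-s}}(\Omega)$ and the reduced part $\Red H^{p,q}_{W^{-s}}(\Omega)$ vanish for every $q$ in the prescribed range, and then invoking the decomposition \eqref{eq-direct}.

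Fix $q$ with $2\leq q\leq n-2$. The first step is to verify that the two index shifts $n-q+1$ and $n-q$ both land inside the window $[2,n-1]$ in which Proposition~\ref{prop-prescribed2} supplies vanishing of minimal $W^{s}$-cohomology. From $q\geq 2$ we get $n-q+1\leq n-1$ and $n-q\leq n-2$, while from $q\leq n-2$ we get $n-q\geq 2$ and $n-q+1\geq 3$; the assumption $n\geq 4$ is exactly what is needed to make the range $[2,n-2]$ of $q$ non-empty and to keep $n-q\geq 2$. With these inequalities in hand, Proposition~\ref{prop-prescribed2} yields
\[
H^{n-p,\,n-q+1}_{c,\,W^{s}}(\Omega)=0 \qquad\text{and}\qquad H^{n-p,\,n-q}_{c,\,W^{s}}(\Omega)=0.
\]

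Applying part~(1) of Proposition~\ref{prop-serre-laufer} with $s$ replaced by $-s$ to the first of these vanishings gives $\Ind H^{p,q}_{W^{-s}}(\Omega)=0$, so $H^{p,q}_{W^{-s}}(\Omega)$ is Hausdorff. Applying part~(2) of the same proposition (again with $s$ replaced by $-s$) to the second vanishing, the perfect Serre pairing identifies $\Red H^{p,q}_{W^{-s}}(\Omega)$ with the continuous dual of the trivial space $\Red H^{n-p,n-q}_{c,W^{s}}(\Omega)=0$, hence $\Red H^{p,q}_{W^{-s}}(\Omega)=0$. Combining these two pieces through the topological decomposition $H^{p,q}_{W^{-s}}(\Omega)\cong \Ind\oplus\Red$ gives $H^{p,q}_{W^{-s}}(\Omega)=0$, as claimed.

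The only real content of the argument is the index-range bookkeeping: the Hausdorff conclusion requires $n-q+1\in[2,n-1]$, the reduced conclusion requires $n-q\in[2,n-1]$, and both constraints are simultaneously met precisely on $q\in[2,n-2]$, which is non-empty exactly when $n\geq 4$. No further analytic input is needed beyond Propositions~\ref{prop-serre-laufer} and \ref{prop-prescribed2}, so I do not anticipate any genuine obstacle.
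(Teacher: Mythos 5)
Your proof is correct and follows exactly the route the paper intends: the paper's own justification is the one-line remark ``Using an argument similar to that used for Corollary~\ref{cor-negative1},'' and your argument is precisely that template, substituting Proposition~\ref{prop-prescribed2} for Proposition~\ref{prop-prescribed1} and applying both parts of Proposition~\ref{prop-serre-laufer} with $s$ replaced by $-s$. Your index bookkeeping (both $n-q+1$ and $n-q$ must lie in the vanishing window $[2,n-1]$, forcing $q\in[2,n-2]$ and hence $n\geq 4$) correctly accounts for the stated range.
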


\subsection{Lack of continuity of the canonical solution in spaces with prescribed support}
\label{sec:canonical_solution_prescribed_support}
As shown in \cite[Theorem 9.1.2]{chenshaw}, the $\dbar$-Neumann operator and the Hodge star operator can be used to construct a solution to $\dbar u=f$ with prescribed support.  In particular, if $N_{p,q}^D$ denotes the $\dbar$-Neumann operator for some bounded pseudoconvex domain $D\subset\mathbb{C}^n$ and $\star:\Lambda^{p,q}\rightarrow\Lambda^{n-p,n-q}$ denotes the Hodge star operator, then whenever $f\in Z^{p,q}_{c,L^2}(D)$ for $1\leq q\leq n-1$ and $0\leq p\leq n$, the form
 \[ u=-\star\dbar N^D_{n-p,n-q}\star f\in A_{c,L^2}^{p,q-1}(D)\]
  solves $\dbar u=f$.  This suffices to prove Proposition \ref{prop-prescribed1} when $s=0$ and $1\leq q\leq n-1$.

When $q=1$, $u=-\star\dbar N^D_{n-p,n-1}\star f$ is the unique solution in $A^{p,0}_{c,L^2}(D)$ to $\dbar u=f$ by part~\ref{part-6} of Proposition \ref{prop-maxrealproperties}.  Hence, Proposition \ref{prop-prescribed1} implies that this $u$ must also lie in $A^{p,0}_{c,W^s}(D)$ whenever $f\in Z^{p,1}_{c,W^s}(D)$.  The situation is quite different when $q\geq 2$, and we can show that this operator does not suffice to solve $\dbar$ in $A^{p,q}_{c,W^s}(D)$ for $s\geq 2$ and $q\geq 2$.
\begin{prop}\label{prop-notcts}
  Let $D\subset\mathbb{C}^n$ be a smooth, bounded, pseudoconvex domain.  For any $0\leq p\leq n$ and $2\leq q\leq n$, there exists an infinite dimensional family of forms
   $f\in \mathcal{D}^{p,q}(D)$ supported in $D$ such that $\dbar f=0$ but $u=-\star\dbar N^D_{p,q}\star f$ fails to be in $A^{p,q-1}_{c,W^2}(D)$.
\end{prop}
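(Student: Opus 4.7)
The plan is to exploit the extra identity $\dbar^* u = 0$ that the form $u = -\star\dbar N^D_{n-p,n-q}\star f$ automatically satisfies. From the Hodge identity $\dbar^* = -\star\dbar\star$ together with $\dbar^2=0$, one computes directly that $\dbar^* u = 0$ on $D$, while $\dbar u = f$ holds by construction. Thus $u$ is the unique element of $A^{p,q-1}_{c,L^2}(D)$ solving $\dbar u = f$ together with $\dbar^* u = 0$. The goal is to show this special solution fails to extend as a $W^2$ form for a large class of $f$'s.

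Assume, toward a contradiction, that $u \in A^{p,q-1}_{c,W^2}(D)$. Then the zero extension $Zu \in W^2(\cx^n)$ is compactly supported in $\ol D$ and vanishes to first order on $bD$ in the trace sense, so the identities $\dbar Zu = Zf$ and $\dbar^* Zu = 0$ hold distributionally on all of $\cx^n$ with no boundary contribution. Applying $\Box = \dbar\dbar^* + \dbar^*\dbar$ gives $\Box Zu = \dbar^*(Zf)$ on $\cx^n$. Since on flat $\cx^n$ the operator $\Box$ acts coefficientwise as a constant multiple of $-\Delta$, inverting by the Newtonian fundamental solution $G$ (componentwise) yields the representation
\[
Zu \;=\; c\, G\ast\dbar^* f,
\]
for a nonzero constant $c$. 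The hypothesis therefore forces the Newtonian potential of $\dbar^* f$ to be compactly supported in $\ol D$.

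To produce infinitely many counterexamples, restrict attention to $f=\dbar g$ with $g \in \mathcal{D}^{p,q-1}(D)$; these $f$ are automatically $\dbar$-closed and compactly supported in $D$. The algebraic identity $\dbar^*\dbar g = \Box g - \dbar\dbar^* g$ on $\cx^n$ turns the previous display into $Zu = g - c\, G \ast \dbar\dbar^* g$. For $w \notin \ol D$, one has $g(w)=0$, so under the contradiction hypothesis the Newtonian potential $G\ast\dbar\dbar^* g$ must vanish on the unbounded component of $\cx^n \setminus \ol D$. However $\dbar\dbar^* g$ is compactly supported in $\ol D$, so $G \ast \dbar\dbar^* g$ is real-analytic and decays at infinity on the exterior; requiring it to vanish identically there is extremely restrictive. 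Each point $w_0$ in the exterior gives a continuous linear functional $\ell_{w_0}(g) = (G\ast\dbar\dbar^* g)(w_0) = \int_D K(w_0,z)g(z)\,dV(z)$ with a real-analytic kernel. Varying $w_0$ over a countable dense subset of the exterior produces countably many linear conditions on $g$, whose common kernel has infinite codimension in $\mathcal{D}^{p,q-1}(D)$. Any complementary infinite-dimensional subspace of $g$'s yields, via $f = \dbar g$, the required family of counterexamples.

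The main obstacle will be verifying the infinite codimension of the common kernel of the $\ell_{w_k}$'s, i.e., showing that the constraints are ``independent enough.'' The cleanest way is to exhibit explicit sequences of test forms $g_k$ (such as narrow bumps centered at distinct interior points of $D$ multiplied by polynomial profiles) and show that the resulting matrix $(\ell_{w_\ell}(g_k))$ has infinite rank, relying on the real analyticity of $G(w-\cdot)$ in $w$ together with unique continuation. The case $q=n$, where $N^D_{n-p,0}$ requires a careful interpretation, is handled analogously since the algebraic steps above are unaffected; alternatively one may embed $D$ in a slightly larger pseudoconvex ambient domain to sidestep this technicality.
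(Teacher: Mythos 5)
Your reductions are essentially correct and take a genuinely different route from the paper: you exploit the identity $\dbar^{*}u=0$ satisfied by $u=-\star\dbar N\star f$, pass to the zero extension (legitimate, since for smooth $D$ a $W^{2}$ form supported in $\ol{D}$ lies in $W^{2}_{0}(D)$, so first-order operators commute with extension by zero), and invert $\Box$ by the Newtonian potential to conclude that membership in $A^{p,q-1}_{c,W^{2}}(D)$ forces $G\ast\dbar\dbar^{*}g$ to vanish on the exterior of $\ol{D}$ when $f=\dbar g$. (Two small points you elide: the representation $\zs u=c\,G\ast\dbar^{*}f$ needs the observation that the harmonic difference decays at infinity and is hence zero, and the case $q=n$, where $N^{D}_{n-p,0}$ involves the infinite-dimensional harmonic space, is waved away rather than handled.)

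The genuine gap is the last step, which you yourself flag as ``the main obstacle'': you never actually produce a single $g\in\mathcal{D}^{p,q-1}(D)$ for which $G\ast\dbar\dbar^{*}g$ fails to vanish on the exterior, let alone an infinite-dimensional family. Asserting that the functionals $\ell_{w}$ have a common kernel of infinite codimension ``by real analyticity and unique continuation'' is a plan, not an argument; real analyticity of the kernel by itself does not give infinite rank. The claim is true and fillable (e.g.\ for $p=0$, $q=2$ take $g=(\partial\psi/\partial\bar z_{1})\,d\bar z_{1}$ with $\psi$ a radial bump, compute $G\ast\psi$ outside the support by Newton's theorem, and check that $\partial_{z_{1}}\partial_{\bar z_{1}}$ of it is nonzero; translates at countably many disjoint centers then give the infinite-dimensional family), but as written nothing is exhibited and the proposition is not proved. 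For comparison, the paper avoids this counting problem entirely: it builds an explicit $\dbar$-closed $f$ supported in a small ball $B_{1}\Subset D$ using the $\dbar$-Neumann operator of the annulus $D\setminus\ol{B_{2}}$, shows the resulting $u=-\star h$ has harmonic coefficients near $bD$ whose gradients are nonvanishing there by the Hopf lemma (hence $u\notin W^{2}_{0}$), and obtains infinitely many examples from countably many disjoint balls. You should either carry out the moment/bump computation in detail or adopt a construction of that explicit kind.
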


\begin{proof}
  Let $B_1$ and $B_2$ be balls satisfying $\overline{B_1}\subset D$ and $\overline{B_2}\subset B_1$.  Let $\Omega=D\backslash\overline{B_2}$.  By Theorem 3.5 in \cite{shawclosed}, the $\dbar$-Neumann operator $N_{n-p,n-q+1}^\Omega$ exists for $0\leq p\leq n$ and $2\leq q\leq n$.

  Let $\rho_2$ be a smooth defining function for $B_2$ and fix $w\in bB_2$ at which $\dbar\rho_2(w)=\frac{\partial\rho_2}{\partial\bar z_1}(w)d\bar z_1$. Let $\rho$ be a smooth defining function for $D$ such that $\dbar\rho(w)=d\bar z_2$.  Within this proof, we use $\dbar^*_D$ and $\dbar^*_\Omega$ to distinguish the adjoint of $\dbar$ and its domain on each of these domains.  Define $g\in C^\infty_{n-p,n-q+1}(\overline D)$ by
  \[
    g=\dbar^*_D(\rho dz_1\wedge\ldots\wedge dz_{n-p}\wedge d\bar z_1\wedge\ldots\wedge d\bar z_{n-q+2}).
  \]
  At our fixed point $w$,
  \[
    g(w)=\left(\frac{\partial\rho_2}{\partial\bar z_1}(w)\right)^{-1}dz_1\wedge\ldots\wedge dz_{n-p}\wedge \dbar\rho_2\wedge d\bar z_3\wedge\ldots\wedge d\bar z_{n-q+2}\neq 0,
  \]
  when $n-1\geq q$, and
  \[
    g(w)=\left(\frac{\partial\rho_2}{\partial\bar z_1}(w)\right)^{-1}dz_1\wedge\ldots\wedge dz_{n-p}\wedge \dbar\rho_2\neq 0,
  \]
  when $q=n-1$.  In either case, we see that $g|_\Omega\notin\dom\dbar^*_\Omega$.  By Theorem 3.5 in \cite{shawclosed} again,
  \[
    g|_\Omega=\dbar\dbar^*_\Omega N_{n-p,n-q+1}^\Omega g+\dbar^*_\Omega\dbar N_{n-p,n-q+1}^\Omega g
  \]
  for $3\leq q\leq n$ and
  \[
    g|_\Omega=\dbar\dbar^*_\Omega N_{n-p,n-1}^\Omega g+\dbar^*_\Omega\dbar N_{n-p,n-1}^\Omega g+P_{n-p,n-1}^\Omega g,
  \]
  for $q=2$, where $P_{n-p,n-1}^\Omega$ is the orthogonal projection onto the infinite-dimensional kernel of $\Box^\Omega$ in $L^2_{n-p,n-1}(\Omega)$.  Since $g|_\Omega\notin\dom\dbar^*_\Omega$, $\dbar\dbar^*_\Omega N_{n-p,n-q+1}^\Omega g$ is nontrivial in either case.  We may assume that $B_1$ has been chosen to be sufficiently small so that $\dbar\dbar^*_\Omega N_{n-p,n-q+1}^\Omega g$ is nontrivial on $\Omega\backslash\overline{B_1}$.

  Let $\chi\in C^\infty_0(B_1)$ satisfy $\chi\equiv 1$ in a neighborhood of $\overline{B_2}$, and set
  \[
    h=\dbar((1-\chi)\dbar^*_\Omega N_{n-p,n-q+1}^\Omega g)
  \]
  on $\Omega$ and $h=0$ on $\overline{B_2}$.  By the interior regularity for the $\dbar$-Neumann problem, $h\in C^\infty_{n-p,n-q+1}(D)\cap L^2_{n-p,n-q+1}(D)$.
  Since $g\in\dom\dbar^*_D$, we also have $h\in\dom\dbar^*_D$, and on $D\backslash\overline{B_1}$ we have
  \[
    \dbar^*_D h=\dbar^*_D\dbar\dbar^*_\Omega N_{n-p,n-q+1}^\Omega g=\dbar^*_D g=0.
  \]
  Since $\dbar h=0$, we conclude that $h$ has harmonic coefficients on $D\backslash\overline{B_1}$, and hence the Hopf Lemma guarantees that the gradients of these coefficients are non-vanishing on $b D$ (if these coefficients are constant on $b D$, then the $\dbar$-Neumann boundary condition requires $h=0$ on $D\backslash\overline{B_1}$, contradicting the non-triviality of $h$ on this set).  Hence, $h\notin A^{n-p,n-q+1}_{c,W^2}(D)$.

  Now, since $h\in\range\dbar$ on a pseudoconvex domain, $h=\dbar N_{n-p,n-q}^D\dbar^*_D h$.  Using Lemma 9.1.1 in \cite{chenshaw},
  \[
    \star h=-\star\dbar N_{n-p,n-q}^D\star(\star\dbar^*_D\star(\star h))=\star\dbar N_{n-p,n-q}^D\star\dbar(\star h).
  \]
  Let $f=\dbar(\star h)$.  Then on $D\backslash\overline{B_1}$, $f=-(-1)^{p+q}\star\dbar^*_D h=0$, so $f$ is supported in $\overline{B_1}$.  Since $h$ is smooth in the interior of $D$, we conclude that $f\in C^\infty_{0,(p,q)}(D)$.  However, $u=-\star\dbar N_{n-p,n-q}^D\star f=-\star h$, so $u\notin A^{p,q-1}_{c,W^2}(D)$.

  Since we can cover $D$ with a countable collection of disjoint closed balls, we can apply this construction to each ball in the family to obtain an infinite dimensional family of forms $f$.

\end{proof}

\numberwithin{equation}{section}
\renewcommand{\theequation}{\thesection.\arabic{equation}}
\renewcommand{\thesection}{\Alph{section}}
\setcounter{section}{0}
\setcounter{subsection}{0}
\section{Appendix:  Non-Hausdorff functional analysis}\label{sec-appendix}

In this appendix  we collect together definitions and results about the non-Hausdorff topological vector spaces that
arise in the study of cohomology groups.  Much of this material consists of  routine variations on well-known results.
Proofs are included only when they are sufficiently different from the classical situation.

\subsection{Semi-inner-product spaces }\label{sec-sip}
\subsubsection{Definitions and basic properties}
By a \emph{semi-normed (linear) space}, we mean a complex vector space $V$ along with a distinguished seminorm $\norm{\cdot}:V\to \rl$  (see \cite[Definition~7.3, page 59]{treves}). The semi-normed space $(V, \norm{\cdot})$ a  becomes a (not necessarily Hausdorff) topological vector space under the \emph{natural semi-metric}  $ d(x,y)= \norm{x-y}$.  Using the semi-metric, one defines a topology on $V$: a basis for the topology consists of
the semi-balls
\[ B(x,\epsilon)=\{y\in X\mid  d(x,y)<\epsilon\}.\]
However, the topology so obtained is not necessarily Hausdorff. For example, the closure of the singleton $\{x\}$ consists of all $y\in V$ such that
$d(x,y)=0$, which may be true for a point different from $x$.

The  semi-normed space
 $(V, \norm{\cdot})$  is a \emph{semi-inner-product} space (\emph{SIP space} for short) if there is a \emph{semi-inner-product} $(\cdot, \cdot)$  on $V$ such that
$\norm{x}^2=(x, x)$ for each $x\in V$, where   a semi-inner product is a sesquilinear, hermitian-symmetric form on $V$ which satisfies $(x,x)\geq 0$. (It
is not assumed that $(x,x)=0$ only if $x=0$). It is not difficult to check that the Cauchy-Schwarz inequality and the triangle inequality continue to hold in a SIP space (though the conditions under which equality holds in these inequalities need to be modified). By the Cauchy-Schwarz inequality, we have the following:
If $X$ is a SIP space and  $z\in X$ is such that $\norm{z}=0$, then for all
		$x\in X$, we have $\ipr{x,z}=0.$

We say that a semi-normed space $V$  is complete  if each Cauchy sequence converges, i.e., if $\{w_j\}\subset V$ is such
that $\lim_{j,k\to \infty} \norm{w_j-w_k}=0$ then there is $w\in V$ such that $w_j\to w$. A SIP space which is complete in its semi-norm is
said to be a \emph{semi-Hilbert space}.

%The proof of the following characterization of SIP spaces
%is identical to the analogous statement for inner-product spaces (\cite{jordanvonneumann}):
%\begin{prop} \label{prop-jordan}  A semi-normed space $(V, \norm{\cdot})$ is a SIP space if and only if for all $x,y\in V$ we have the parallelogram identity:
%	\begin{equation}
%	\label{eq-parallelogram}
%	\norm{x+y}^2 + \norm{x-y}^2 = 2 \norm{x}^2 + 2 \norm{y}^2.
%	\end{equation}
%\end{prop}
The following elementary result characterizing continuous maps is proved in the same way as the Hausdorff case:
\begin{prop}\label{prop-continuity}
	Let  $T:X\to Y$ be a linear map of SIP spaces.  Then $T$ is continuous if and only if there is a $C\geq 0$ such that $\norm{Tx}_Y \leq C \norm{x}_X$.
\end{prop}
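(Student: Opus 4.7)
The proof will follow the classical equivalence of continuity and boundedness for linear maps between normed spaces, with a small modification to handle vectors with zero seminorm. The plan is to verify the sufficiency direction exactly as in the normed case, and then to argue the necessity direction by treating the indiscrete points ($\|x\|_X = 0$) separately using the linear-scaling trick.

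For the sufficiency direction, assume the estimate $\|Tx\|_Y \le C\|x\|_X$ holds for some $C\ge 0$. Given $\epsilon>0$, choose $\delta = \epsilon/(C+1)$. If $\|x-y\|_X < \delta$, then by linearity and the hypothesis $\|Tx-Ty\|_Y = \|T(x-y)\|_Y \le C\|x-y\|_X < \epsilon$. Thus $T$ is continuous (indeed uniformly continuous) with respect to the semi-metric topologies on $X$ and $Y$.

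For the necessity direction, suppose $T$ is continuous. Then $T$ is continuous at $0$, so there exists $\delta > 0$ such that $\|x\|_X < \delta$ implies $\|Tx\|_Y < 1$. I will show $C = 2/\delta$ works. First, suppose $\|x\|_X > 0$; setting $y = (\delta/2)x/\|x\|_X$ gives $\|y\|_X = \delta/2 < \delta$, hence $\|Ty\|_Y < 1$, which rearranges to $\|Tx\|_Y < (2/\delta)\|x\|_X$. The step I want to highlight — and the only place where the SIP (as opposed to normed) setting matters — is the case $\|x\|_X = 0$: for every $\lambda > 0$ we have $\|\lambda x\|_X = 0 < \delta$, so $\lambda\|Tx\|_Y = \|T(\lambda x)\|_Y < 1$, and letting $\lambda \to \infty$ forces $\|Tx\|_Y = 0$. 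In particular the inequality $\|Tx\|_Y \le C\|x\|_X$ holds trivially in this case (both sides vanish), so the estimate holds for all $x\in X$.

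There is no serious obstacle here: the only subtlety is that one cannot divide by $\|x\|_X$ when it vanishes, but the linearity of $T$ combined with the absorbing behavior of the neighborhood $\{x : \|x\|_X < \delta\}$ on the indiscrete subspace $\Ind(X)$ immediately forces $T$ to send such vectors into $\Ind(Y)$, which is exactly what is needed for the boundedness estimate to extend across all of $X$.
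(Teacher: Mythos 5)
Your proof is correct and is exactly the argument the paper has in mind: the paper gives no written proof, stating only that the result ``is proved in the same way as the Hausdorff case,'' and your adaptation of the classical boundedness--continuity equivalence, with the scaling trick $\lambda\norm{Tx}_Y=\norm{T(\lambda x)}_Y<1$ to handle vectors with $\norm{x}_X=0$, is precisely the routine modification required. Nothing is missing.
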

Given two SIP spaces $X$ and $Y$, there is a natural semi-inner product on the algebraic direct sum $X\oplus Y$, given by
\begin{equation}
\label{eq-directsum}
 \ipr{(x,y),(x',y')}_{X\oplus Y}= \ipr{x,x'}_X + \ipr{y,y'}_Y. \end{equation}
Notice, however, that if $Z=X\oplus Y$ is a direct sum of SIP spaces, then we do not have in general that $Y=X^\perp=\{z\in Z : \ipr{z,x}=0 \text{ for all } x\in X\}$. Indeed
if $X$ is indiscrete, then $X^\perp=Z$.

The following  proposition, versions of which   are widely known,
describes the structure of a general SIP space as the sum of an indiscrete and a Hausdorff part. The indiscrete part $\Ind(X)$ and the reduced form
$\Red(X)$ of a semi-inner-product space are defined as in \eqref{eq-indiscrete} and \eqref{eq-reduced}.   We establish the linear homeomorphism \eqref{eq-direct}:

\begin{prop}\label{prop-structure} Let $(X, \ipr{\cdot,\cdot}	)$ be a SIP space.
	Then $\Ind(X)$ is the unique indiscrete  closed linear subspace of $X$
	such that if $Z$ is any linear subspace of $X$ algebraically complementary to $\Ind(X)$ then
	\begin{enumerate}
		\item restricted to  $Z$, the semi-inner product of $X$ is an inner product, so $Z$ is Hausdorff.
		\item 	the semi-inner product space $\Ind(X)\oplus Z$, thought of as a topological vector space,
		is linearly homeomorphic to $X$.
	\end{enumerate}
\end{prop}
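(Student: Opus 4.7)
The plan is to verify the structural claims about $\Ind(X)$ directly from the semi-inner-product, leaning heavily on the Cauchy--Schwarz consequence stated earlier: if $\norm{y}=0$ then $\ipr{y,x}=0$ for every $x\in X$. First I would check that $\Ind(X)=\{x\in X:\norm{x}=0\}$ is a closed linear subspace with the indiscrete topology; linearity is immediate from the triangle inequality, closedness from the continuity of the semi-norm, and the indiscrete topology from the fact that for any $y_1,y_2\in\Ind(X)$ and any $\epsilon>0$ one has $\norm{y_1-y_2}\leq\norm{y_1}+\norm{y_2}=0<\epsilon$, so every semi-ball in $\Ind(X)$ is all of $\Ind(X)$.

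Next, for an arbitrary algebraic complement $Z$ of $\Ind(X)$, I would verify property~(1) by the tautology $Z\cap\Ind(X)=\{0\}$: any $z\in Z$ with $\norm{z}=0$ lies in $\Ind(X)\cap Z$, hence is $0$. Then I would treat property~(2) by defining $\Phi\colon \Ind(X)\oplus Z\to X$ via $\Phi(y,z)=y+z$, which is a linear bijection since $Z$ is algebraically complementary to $\Ind(X)$. The key calculation is
\begin{equation*}
\norm{y+z}^2=\ipr{y,y}+\ipr{y,z}+\ipr{z,y}+\ipr{z,z}=\norm{z}^2,
\end{equation*}
where the cross terms vanish by Cauchy--Schwarz applied to $y\in\Ind(X)$. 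Since the semi-inner-product on the direct sum satisfies $\norm{(y,z)}^2=\norm{y}^2+\norm{z}^2=\norm{z}^2$, the map $\Phi$ is an isometric bijection of semi-normed spaces, hence a linear homeomorphism (open balls correspond to open balls).

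For uniqueness, suppose $Y$ is any closed, indiscrete linear subspace of $X$ for which properties~(1) and (2) hold with respect to every algebraic complement. Since $Y$ carries the subspace topology from the semi-norm, indiscreteness of $Y$ forces $\norm{y_1-y_2}=0$ for all $y_1,y_2\in Y$; taking $y_2=0$ shows $Y\subseteq\Ind(X)$. If the inclusion were strict, I would pick any algebraic complement $Z$ of $Y$ and any $x\in\Ind(X)\setminus Y$, decompose $x=y+z$ with $y\in Y$ and $z\in Z\setminus\{0\}$, and repeat the above computation to obtain $\norm{z}^2=\norm{x}^2=0$; then property~(1) for $Z$ would force $z=0$, a contradiction.

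The only delicate point is making sure the direct-sum topology on $\Ind(X)\oplus Z$ used in the statement is precisely the one induced by the semi-inner-product \eqref{eq-directsum}, so that the isometry argument really produces a topological isomorphism; this is what the Cauchy--Schwarz collapse of cross terms is doing, and it is the step that makes the whole decomposition work painlessly. Beyond that, every step is a direct translation of the Hilbert-space argument to the semi-definite setting.
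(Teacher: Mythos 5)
Your proof is correct, and it follows the same overall decomposition as the paper but executes the key step differently and covers a point the paper leaves implicit. For part (2) the paper argues softly: the projection $P:X\to\Ind(X)$ is automatically continuous because its target is indiscrete, and then both components of $x\mapsto(Px,\,x-Px)$ and the inverse $(y,z)\mapsto y+z$ are checked to be continuous. You instead prove the sharper statement that $(y,z)\mapsto y+z$ is a semi-norm \emph{isometry}, using Cauchy--Schwarz to kill the cross terms so that $\norm{y+z}^2=\norm{z}^2=\norm{(y,z)}^2_{\Ind(X)\oplus Z}$; this single computation subsumes the continuity of both directions at once and is, if anything, cleaner than the paper's argument. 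You also supply an explicit proof of the uniqueness assertion in the proposition (any closed indiscrete subspace $Y$ satisfying the conclusions must equal $\Ind(X)$), which the paper's proof does not spell out, so this is a genuine completion rather than a redundancy. One tiny remark: in the uniqueness step you only need $Y$ indiscrete to conclude $Y\subseteq\Ind(X)$, and you only need property (1) for one chosen complement $Z$ to rule out strict inclusion, so your hypotheses there are slightly stronger than necessary; the argument is nonetheless valid as written.
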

\begin{proof}
	Recall that $\Ind(X) =\{x\in X: \norm{x}=0\}$ by \eqref{eq-indiscrete}. Then $\Ind(X)$ is a  closed linear subspace of $X$ and with the restricted semi-norm, $\Ind(X)$ is indiscrete.  Let $Z$ be a  linear subspace of $X$ which is algebraically complementary to $\Ind X$, i.e.,
	each element of $X$ can be uniquely written as $y+z$ where $y\in \Ind(X)$ and $z\in Z$.
	
	If $z\in Z$ is such that $\norm{z}=0$, then $z\in \Ind(X)$. But since $\Ind(X)\cap Z=\{0\}$ it follows that $z=0$. Consequently, $Z$ is
	an inner-product space.

	Let the map $P:X\to \Ind(X)$ be
	defined by $Px=y$, where $x=y+z$ is the unique representation of $x$ as the sum of an element $y\in \Ind(X)$ and $z\in Z$, so that $P$ is the projection map  onto $\Ind(X)$  and has kernel $Z$.
	Then $P$ is continuous, since $\Ind X$ has the indiscrete topology. The map
	\[ X \to \Ind(X) \oplus Z\]
	given by
	\[ x \mapsto (Px, x-Px)\]
	is clearly injective, and it is  continuous with respect to the natural topology on $\Ind(X) \oplus Z$, since each of the two components is continuous.  The map $\Ind(X)\oplus Z \to X$  given by $(y,z)\mapsto y+z$ is its continuous
	linear inverse, so this is a linear homeomorphism.  The result follows.
\end{proof}

\subsubsection{Quotients} \label{sec-quotient}
Let $X$ be a TVS (topological vector space) and $Z\subset X$ a linear subspace of $X$.  The  pair $(X/Z, \pi)$ where
$X/Z$ is the quotient topological vector space and $\pi:X\to X/Z$ is the continuous natural projection enjoys the following universal property:
if $f:X\to W$ is a continuous linear map of topological vector spaces such that $f|_Z=0$, then there is a unique continuous linear map
$\ol{f}: X/Z\to W$ such that $f=\ol{f}\circ \pi$, i.e. the following diagram commutes:
\begin{equation}\label{diag-quotient}
	\begin{tikzcd}
X \arrow[r, "f"] \arrow [d, "\pi"] & W\\
X/Z \arrow[ru, dashrightarrow, "\ol{f}" ']&
\end{tikzcd}\end{equation}
Notice that it follows from this property that  as a linear space, $X/Z$ can be identified with
the  quotient vector space, defined in the  usual way (see \cite[p.~15]{treves}). The following proposition shows that SIP spaces, unlike inner-product spaces, are stable with respect to quotients.
\begin{prop}\label{prop-quotient}
	Let $(X,(\cdot,\cdot))$ be an SIP space, and let $Z\subset X$ be a linear subspace.  There is a
	semi-inner product $\langle\cdot, \cdot\rangle$ on the
	quotient vector space $X/Z$ such that 	the semi-norm topology on $(X/Z, \langle\cdot,\cdot\rangle)$ coincides with the quotient topology.
	%\begin{enumerate}
	%	\item the semi-norm topology on $(X/Y, \langle\cdot,\cdot\rangle)$ coincides with the quotient topology.
	%	\item
	%\end{enumerate}
\end{prop}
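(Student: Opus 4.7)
The plan is to build the semi-inner product on $X/Z$ by pulling back a genuine inner product from an appropriately constructed Hausdorff quotient, so that no positive-definiteness issues intervene in the construction itself. The key observation is that a semi-inner-product space already produces a proper normed inner-product space via Proposition~\ref{prop-structure}, on which the classical Hilbert-space construction of a quotient inner product applies directly.

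First I would pass from $X$ to its reduced form $R=\mathrm{Red}(X)=X/\mathrm{Ind}(X)$, which by Proposition~\ref{prop-structure} is a genuine (Hausdorff) inner-product space; let $\pi_{\mathrm{Ind}}\colon X\to R$ denote the natural continuous projection, and note that $\|\pi_{\mathrm{Ind}}(v)\|_R=\|v\|_X$ for every $v\in X$, since any two representatives of the same class in $R$ differ by an element of $\mathrm{Ind}(X)$ and the triangle inequality forces a common value of the seminorm. Next, set $\widetilde Z=\pi_{\mathrm{Ind}}(Z)\subset R$, take its closure $\overline{\widetilde Z}$ in the norm topology of $R$, and form $Y=R/\overline{\widetilde Z}$. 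Since $\overline{\widetilde Z}$ is a closed subspace of an inner-product space, $Y$ carries a canonical inner product whose norm is the usual infimum $\|[r]_Y\|=\inf_{w\in\overline{\widetilde Z}}\|r-w\|_R$. The composition $\pi_{\overline{\widetilde Z}}\circ\pi_{\mathrm{Ind}}\colon X\to Y$ is continuous and vanishes on $Z$, so the universal property \eqref{diag-quotient} produces a unique continuous linear map $\phi\colon X/Z\to Y$; defining $\langle [x],[y]\rangle_{X/Z}:=\langle\phi([x]),\phi([y])\rangle_Y$ then yields a well-defined semi-inner product on $X/Z$ by construction.

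Finally I would verify that the associated semi-norm $\|[x]\|_{X/Z}=\|\phi([x])\|_Y$ coincides with the standard quotient seminorm $p([x])=\inf_{z\in Z}\|x-z\|_X$, which in turn is known to generate the quotient topology on $X/Z$. Unwinding the definitions,
\[
\|\phi([x])\|_Y=\inf_{w\in\overline{\widetilde Z}}\|\pi_{\mathrm{Ind}}(x)-w\|_R=\inf_{z\in Z}\|\pi_{\mathrm{Ind}}(x-z)\|_R=\inf_{z\in Z}\|x-z\|_X,
\]
using density of $\widetilde Z$ in $\overline{\widetilde Z}$ together with continuity of the norm in the middle equality, and the identity $\|\pi_{\mathrm{Ind}}(v)\|_R=\|v\|_X$ in the last. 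The main point requiring care is the equality between the quotient topology on $X/Z$ and the topology induced by $p$, without completeness or closedness hypotheses on $Z$: this comes from the standard observation that for a seminormed TVS the image under the quotient map of the open $\|\cdot\|_X$-ball of radius $\varepsilon$ around $0$ is precisely the open $p$-ball of radius $\varepsilon$ in $X/Z$, so that both topologies share the same fundamental system of neighborhoods of $0$.
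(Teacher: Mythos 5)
Your argument is correct, but it takes a genuinely different route from the paper. The paper works entirely inside $X/Z$: it defines the quotient seminorm $\norm{\pi(x)}_{X/Z}=\inf_{z\in Z}\norm{x+z}$, shows via the universal property that this seminorm induces the quotient topology, and then proves the seminorm comes from a semi-inner product by verifying the parallelogram identity directly --- applying it to $x+z$ and $y+w$ and taking infima over $Z\times Z$ after the change of variables $u=z+w$, $v=z-w$. You instead reduce to the Hausdorff case: pass to $\Red(X)$, quotient by the closure of the image of $Z$, and pull the resulting inner product back along the induced map $\phi$. Your route buys a conceptually clean reduction to ``classical'' Hilbert-space-style facts and reuses the structure theory of Proposition~\ref{prop-structure}; the paper's route buys self-containment, since the parallelogram computation is elementary and makes no appeal to closures or completions. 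Your verification that the pulled-back seminorm equals the quotient seminorm, and your ball-image argument that the quotient seminorm generates the quotient topology, are both correct (the latter is in fact more direct than the paper's universal-property argument).

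Two steps deserve more care. First, the assertion that $Y=R/\overline{\widetilde Z}$ ``carries a canonical inner product whose norm is the usual infimum'' is the off-the-shelf fact only when $R$ is complete, and $\Red(X)$ is merely a pre-Hilbert space here. The statement is still true, but to justify it you must either run the same parallelogram-identity computation that constitutes the heart of the paper's proof, or embed $R$ into its completion $\widehat{R}$ and check that $R/\overline{\widetilde Z}$ sits isometrically inside the Hilbert-space quotient $\widehat{R}/\mathrm{cl}_{\widehat{R}}(\widetilde Z)$ (which works because both quotient norms reduce to an infimum over the dense subspace $\widetilde Z$). Either way, this is the one place where your proof is not yet complete as written. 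Second, to know that $\Red(X)$ is an inner-product space (not merely a normed space), you need the semi-inner product, and not just the seminorm, to be constant on cosets of $\Ind(X)$; this follows from the Cauchy--Schwarz observation recorded in Section~\ref{sec-sip} that $\norm{z}=0$ forces $\ipr{x,z}=0$ for all $x$, whereas Proposition~\ref{prop-structure} by itself only hands you a complementary subspace rather than the quotient. Both gaps are fillable, but the first one shows that, fully unpacked, your argument either reproduces the paper's key computation or requires a detour through the completion.
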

\begin{proof}
	Let $\pi:X\to X/Z$ be the quotient map. We define
	\begin{equation}
	\label{eq-quotientnorm}
	\norm{\pi(x)}_{X/Z}= \inf_{z\in Z} \norm{x+z},
	\end{equation}
	which is defined on all of $X/Z$ since $\pi$ is surjective, and is easily seen to be a seminorm using standard arguments.
	Let $f:X\to W$ be a continuous linear map of topological vector spaces such that $f|_Z=0$, and let $\ol{f}:X/Z\to W$ be the induced map.
	Now notice that for each $x\in X$ and $z\in Z$ we have for a $C\geq 0$ depending only on the map $f$:
	\[\norm{\ol{f}(\pi(x))}_W= \norm{f(x)}_W = \norm{f(x+z)}_W \leq C \norm{x+z}_X.\]
	Since this holds for each $z\in Z$, it follows that
	\[ \norm{\ol{f}(\pi(x))}_W \leq C \inf_{z\in Z}\norm{x+z}_X= C\norm{\pi(x)}_{X/Z},\]
	which shows that $\ol{f}$ is continuous, and therefore by the universal property \eqref{diag-quotient},
 the seminorm on $X/Z$
	generates the quotient topology. To complete the proof we only need to show that this semi-norm is generated by a semi-inner-product. A classic argument well-known for norms and inner products (see (\cite{jordanvonneumann})) shows that  a semi-norm  $\norm{\cdot}_V$ on a vector space $V$  is generated by  a semi-inner product if and only if for all $x,y\in V$ we have the \emph{parallelogram identity}:
	\begin{equation}
	\label{eq-parallelogram}
	\norm{x+y}^2_V + \norm{x-y}^2_V = 2 \norm{x}^2_V + 2 \norm{y}^2_V.
	\end{equation}
	Now, let $x,y\in X$ and $z,w\in Z$, and apply \eqref{eq-parallelogram} to $x+z$ and $ y+w$ in $X$. Then
	\[ \norm{x+y+(z+w)}^2 +\norm{x-y+(z-w)}^2 = 2 \norm{x+z}^2 + 2 \norm{y+w}^2.\]
	Let $u=z+w$ and $v=z-w$. Then, as $z$ and $w$ range independently over $Z$, so do $u$ and $v$. Therefore, we conclude by taking infima of both sides of the above equation that
	\[ \norm{\pi(x)+\pi(y)}_{X/Z}^2 + \norm{\pi(x)-\pi(y)}_{X/Z}^2 = 2\norm{\pi(x)}_{X/Z}^2 +2\norm{\pi(y)}_{X/Z}^2.\]
	Therefore, since the parallelogram identity holds for the seminorm,  the quotient $X/Z$ is a semi-inner-product space.
\end{proof}

The following fact is proved in the same way as in the  classical situation (cf. \cite[Proposition~4.5, page 34]{treves}):
\begin{prop} \label{prop-hausdorff-q}Let $X$ be a SIP space and $Z$ a linear subspace. Then the semi-inner-product on  $X/Z$ is an inner-product  (equivalently,
	$X/Z$ is Hausdorff) if and only if $Z$ is closed in $X$.
\end{prop}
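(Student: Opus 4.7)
The plan is to reduce the statement to a direct computation with the explicit quotient semi-norm from \eqref{eq-quotientnorm}. Since $X/Z$ has been shown in Proposition~\ref{prop-quotient} to be a SIP space whose topology coincides with the semi-norm topology generated by $\|\pi(x)\|_{X/Z} = \inf_{z\in Z}\|x+z\|$, I will use the general principle that a semi-normed TVS is Hausdorff precisely when its defining semi-norm vanishes only at the origin. Moreover, by the Cauchy--Schwarz consequence recorded just after the definition of a SIP space, $\|y\|=0$ forces $\langle y,\cdot\rangle=0$, so the semi-inner-product on $X/Z$ is a genuine inner product if and only if the semi-norm is a genuine norm, i.e.\ if and only if $X/Z$ is Hausdorff. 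This reduces the problem to showing that $\|\pi(x)\|_{X/Z}=0 \Longleftrightarrow x\in Z$ is equivalent to $Z$ being closed.

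The key computation is then to identify the points $x\in X$ with $\|\pi(x)\|_{X/Z}=0$. Directly from \eqref{eq-quotientnorm}, $\inf_{z\in Z}\|x+z\|=0$ holds if and only if there is a sequence $(z_n)\subset Z$ with $\|x+z_n\|\to 0$, i.e.\ $-z_n\to x$ in the semi-norm topology of $X$. Therefore the set of $x\in X$ with $\|\pi(x)\|_{X/Z}=0$ is exactly the topological closure $\overline{Z}$ of $Z$ inside $X$. (One must be a little careful here: since $X$ itself is only semi-normed, the closure $\overline{Z}$ may contain points $z'\notin Z$ with $\|z'-z\|=0$ for some $z\in Z$; this is harmless, since those $z'$ are in every closed set containing $Z$.)

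Combining the two steps, the quotient $X/Z$ is Hausdorff if and only if $\|\pi(x)\|_{X/Z}=0$ implies $x\in Z$, which by the previous paragraph is equivalent to $\overline{Z}\subset Z$, i.e.\ to $Z$ being closed in $X$. This completes the chain of equivalences claimed in the proposition.

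I do not expect a serious obstacle. The only subtlety, compared with the classical Hausdorff case treated in \cite[Proposition~4.5]{treves}, is to verify that in a non-Hausdorff SIP space the equivalence ``$X/Z$ Hausdorff $\Longleftrightarrow$ the semi-inner-product is an inner product'' still holds; this is handled once and for all by the Cauchy--Schwarz observation above. Everything else is the same formal manipulation as in the classical normed setting.
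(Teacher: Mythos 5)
Your proof is correct and is exactly the classical argument that the paper defers to (it gives no proof, only the remark that the statement ``is proved in the same way as in the classical situation'' with a reference to Treves): you identify the null set of the quotient semi-norm $\inf_{z\in Z}\norm{x+z}$ with $\overline{Z}$, and observe that Hausdorffness of a semi-normed space is equivalent to the semi-norm being a norm, which for a SIP space is equivalent to the semi-inner-product being an inner product. All steps check out, including the careful remark about closure in a non-Hausdorff ambient space.
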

In view of Proposition~\ref{prop-structure}, {for a semi-inner-product space $X$, the indiscrete part of $X$ is easily verified to be a closed subspace of $X$, and each closed subspace of $X$ contains the subspace $\Ind(X)$.  The reduced form of $X$ defined by \eqref{eq-reduced} is a \emph{normed}  (and therefore Hausdorff) space, thanks to Proposition~\ref{prop-hausdorff-q}.}
	\subsection{Short exact sequences of  semi-inner-product spaces}\label{sec-shortexact}
	Let
	\begin{equation}
	\label{eq-short-exact}
	 0\to X\xrightarrow{\rho}Y
		\xrightarrow{\lambda}Z\to 0
	\end{equation}
	be a short exact sequence of semi-inner-product spaces and continuous maps. We collect here a few simple observations about such sequences:
	
	\begin{enumerate}[wide]
		\item As with any exact sequence of vector spaces, we have an algebraic isomorphism of vector spaces $ Y/\rho(X)\cong Z$, and an algebraic splitting, i.e., a direct sum representation of vector spaces
		\begin{equation}\label{eq-splitting1}
		Y =  \rho(X) \oplus \mu(Z),
		\end{equation}
where $\mu:Z\to Y$ is an injective linear map. Notice that the splitting is not natural, i.e., the map $\mu$ is not determined by the exact sequence \eqref{eq-short-exact}. We emphasize that
this splitting is not topological, i.e, the topology on $Y$ may be different from the direct sum topology from the subspaces $\rho(X)$ and $\mu(Z)$.
		
		\item Since $\lambda:Y\to Z$ is continuous and vanishes on $\rho(X)$ by the
		universal  property of quotient TVS (see Section~\ref{sec-quotient} above), we obtain an induced continuous linear bijection
		\begin{equation}\label{eq-lambdabar}
		\ol{\lambda}: (Y/\rho(X))^{\mathrm{top}} \to Z,
		\end{equation}
		where $ (Y/\rho(X))^{\mathrm{top}}$ is the vector space $Y/\rho(X)$ endowed with the quotient topology.
		
		\item \emph{Assume now that in \eqref{eq-short-exact} the space $Z$ is Hausdorff.}
		
		Then since $\ol{\lambda}$ is continuous and injective, it follows that $(Y/\rho(X))^{\mathrm{top}} $ is Hausdorff, and by the closed-graph theorem, $\ol{\lambda}$ is an isomorphism.

		Since  $(Y/\rho(X))^{\mathrm{top}} $ is Hausdorff, Proposition~\ref{prop-hausdorff-q} implies that $\rho(X)$ is closed in $Y$, and therefore must contain the indiscrete subspace $\Ind(Y)$ of $ Y$
		(see \eqref{eq-indiscrete}).  Thanks to Proposition~\ref{prop-structure}, we have a
		direct sum decomposition of TVS
		\[\rho(X)= \Ind( Y)\oplus V, \]
		where $V$ is a linear subspace of $\rho(X)$ algebraically complementary to $\Ind(Y)$, i.e., each $x\in \rho(X) $ has a unique representation $x=y+z$, where $y\in  \Ind( Y)$ and $z\in V$. Let $W$ be an algebraic complement of $ \Ind( Y)$ in
		$ Y$ such that $V\subset W$ (such a complement exists for algebraic reasons). Then clearly $W$ is a Hilbert space in the inner product induced from $Y$ and
		$V$ is a closed subspace of $W$, so we have an orthogonal decomposition $W=V\oplus V'$, where $V'$ is the orthogonal complement of $V$ in $W$. Then we have a direct sum decomposition
		of TVS
		\[ Y= \rho(X) \oplus V'.\]
		It follows by exactness of \eqref{eq-short-exact} that the restriction
		$ \lambda|_{V'}:V'\to Z $
		is a bijective continuous linear map of Hilbert spaces, and therefore an isomorphism in the category of TVS by the closed-graph theorem. If we now let $\mu:Z\to Y$ be the inverse of $ \lambda|_{V'}$,
		we again obtain a splitting as in \eqref{eq-splitting1}, but  (a) now the splitting is topological, i.e., the topology on $Y$ is the same as the direct sum topology from the right hand side, (b) though the splitting  is still not natural, since the algebraic complement $W$ cannot be chosen naturally.
		
				\item \emph{Assume now that in \eqref{eq-short-exact} both  the spaces $Y$ and $Z$ are Hausdorff.}

		Since $\rho$ is continuous and injective and $Y$ is Hausdorff, it follows that $X$ is also Hausdorff. Therefore, we have, by standard results in the theory of Hilbert spaces, an orthogonal direct sum representation
		\[ Y= \img \rho \oplus (\img \rho)^\perp
		=  \img \rho \oplus (\ker \lambda)^\perp= \img \rho \oplus \img (\lambda^\dagger),\]
		where $\lambda^\dagger:Z\to Y$ is the Hilbert-space adjoint of $\lambda$. Therefore we have a natural orthogonal splitting
		\begin{equation}
		\label{eq-splitting2}
		Y= \rho(X)\oplus \lambda^\dagger(Z).
		\end{equation}

			\end{enumerate}
\subsection{Cochain complexes} In this paper we consider cochain complexes of the form
\[  \rightarrow E^{q-1}\xrightarrow{d^{q-1}} E^q \xrightarrow{d^q} E^{q+1} \xrightarrow{d^{q+1}}\]
where each $E^q$ is  an inner-product space and the differentials $d^q$ are continuous linear maps satisfying $  d^q\circ d^{q-1}=0$ for each $q$.
Note that it is not assumed that the space $E^q$ is complete in the norm induced by the inner product.
It is of course possible to consider much more general topologized cochain complexes (see \cite{meichichristine}).
The {\em cohomology groups} of the complex are the quotient vector spaces
\begin{equation}  \label{eq-cohomology} H^{q}(E) = \frac{Z^{q}(E)}{B^q(E)},\end{equation}
where $ Z^{q}(E)= \ker d^q $ and $ B^q(E)= \img d^{q-1}$
are the spaces of cocycles and { coboundaries} respectively.  Since $d^q$ is continuous,  $Z^q(E)$ is a closed subspace
of $E^q$. The following is clear:
\begin{prop}\label{prop-sip-cohomology}
	If $E$ is a cochain complex of inner-product spaces, then the cohomology group $H^q(E)$ of \eqref{eq-cohomology} has a natural structure of a semi-inner product space, and this semi-inner-product gives rise to the quotient topology. This semi-inner product space is an inner product (and therefore Hausdorff) space if and only if $B^q(E)$ is closed as a subspace of $Z^q(E)$. Further, the inner product space $H^q(E)$  is a Hilbert space provided that the space $E^q$ is a Hilbert space.
\end{prop}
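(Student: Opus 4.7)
The plan is to recognize Proposition~\ref{prop-sip-cohomology} as an essentially immediate corollary of the abstract machinery already built up in the appendix, requiring only the identification of $Z^q(E)$ as a closed inner-product subspace of $E^q$.

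First I would note that, since $d^q: E^q\to E^{q+1}$ is a continuous linear map of inner product spaces, its kernel $Z^q(E)$ is a closed linear subspace of $E^q$, and therefore inherits from $E^q$ the structure of an inner product space (in particular, a semi-inner product space in the sense of Section~\ref{sec-sip}). The cochain identity $d^q\circ d^{q-1}=0$ gives the inclusion $B^q(E)\subseteq Z^q(E)$, so $H^q(E)=Z^q(E)/B^q(E)$ is a well-defined quotient topological vector space. Applying Proposition~\ref{prop-quotient} with the SIP space $X=Z^q(E)$ and the subspace $Z=B^q(E)$ then produces a semi-inner product on $H^q(E)$ whose associated semi-norm topology coincides with the quotient topology; this establishes the first assertion of the proposition.

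The second assertion follows at once from Proposition~\ref{prop-hausdorff-q}: the quotient semi-inner product on $Z^q(E)/B^q(E)$ is a genuine inner product (equivalently, $H^q(E)$ is Hausdorff) if and only if $B^q(E)$ is closed in $Z^q(E)$. For the third assertion, suppose $E^q$ is a Hilbert space; then the closed subspace $Z^q(E)\subseteq E^q$ is itself a Hilbert space in the induced inner product. In the case when $B^q(E)$ is closed in $Z^q(E)$ (so that $H^q(E)$ is already an inner product space by the previous step), the quotient norm furnished by Proposition~\ref{prop-quotient}, namely $\|\pi(x)\|=\inf_{z\in B^q(E)}\|x+z\|$, is precisely the standard Hilbert-space quotient norm. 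Hence $H^q(E)$ is isometrically isomorphic to the orthogonal complement of $B^q(E)$ inside $Z^q(E)$, and is therefore a Hilbert space.

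I do not expect any genuine obstacle. The two nontrivial ingredients—the descent of the parallelogram identity through the infimum, and the compatibility of the infimum semi-norm with the quotient topology—are exactly what Proposition~\ref{prop-quotient} provides, and the completeness statement reduces to the classical fact that the quotient of a Hilbert space by a closed subspace is again a Hilbert space. The proof is essentially an assembly of facts already verified in the appendix.
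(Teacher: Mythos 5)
Your proposal is correct and follows exactly the route the paper intends: the paper offers no written proof (it introduces the proposition with ``The following is clear''), relying implicitly on the observation that $Z^q(E)=\ker d^q$ is closed, on Proposition~\ref{prop-quotient} for the semi-inner-product and the quotient topology, on Proposition~\ref{prop-hausdorff-q} for the Hausdorff criterion, and on the classical fact that the quotient of a Hilbert space by a closed subspace is a Hilbert space. Your write-up is simply the explicit assembly of these ingredients, including the correct reading of the final clause as presupposing that $B^q(E)$ is closed.
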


Let $(E,d)$ and $(F,\delta)$ be cochain complexes of inner-product spaces. A continuous cochain map $f$ is given by continuous linear maps
$f^q: E^q\to F^q$ such that for each $q$ we have $ \delta^{q}\circ f^q =  f^{q+1}\circ d^q$, i.e. the following diagram commutes for each $q$:
\[
\begin{tikzcd}
E^q \arrow[r, "f^q"] \arrow[d, "d^q"]
& F^{q} \arrow[d, "\delta^q"] \\
E^{q+1} \arrow[r, "f^{q+1}"]
& F^{q+1}
\end{tikzcd}
\]
It is well-known (see \cite[Chapter XX]{lang}) that such a map induces a linear map of the cohomologies in each degree.
The functoriality of cohomology interacts nicely with the topology:
\begin{prop}\label{prop-ind-cont}
	Let $f: (E,d)\to (F,\delta)$  be a continuous cochain map between   cochain complexes of inner-product spaces. Then the induced map
	$ f^q_*:H^q(E)\to H^q(F)$
	is continuous for each $q$.
\end{prop}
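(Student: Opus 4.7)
The plan is to deduce continuity of $f^q_*$ directly from the universal property of the quotient topology recorded in diagram~\eqref{diag-quotient}, applied to the composition of $f^q$ with the projection onto $H^q(F)$.

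First I would set up the relevant maps. Since $f$ is a cochain map, for each $q$ the restriction $f^q|_{Z^q(E)}$ maps into $Z^q(F)$: indeed if $x\in Z^q(E)$ then $\delta^q(f^q(x))=f^{q+1}(d^q(x))=0$. This restriction is continuous because $f^q\colon E^q\to F^q$ is continuous by hypothesis and $Z^q(E)$, $Z^q(F)$ carry the subspace topologies from $E^q$, $F^q$. Composing with the quotient projection $\pi_F\colon Z^q(F)\to H^q(F)$, which is continuous by construction, I obtain a continuous linear map
\[
\varphi\colon Z^q(E)\longrightarrow H^q(F),\qquad x\longmapsto [f^q(x)]_{H^q(F)}.
\]

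Next I would verify that $\varphi$ vanishes on the subspace $B^q(E)$ of coboundaries. This is the usual algebraic computation: if $x=d^{q-1}(y)$ for some $y\in E^{q-1}$, then
\[
f^q(x)=f^q(d^{q-1}(y))=\delta^{q-1}(f^{q-1}(y))\in B^q(F),
\]
so $\varphi(x)=[f^q(x)]_{H^q(F)}=0$. Consequently $\varphi$ factors through the quotient $H^q(E)=Z^q(E)/B^q(E)$.

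Finally, by the universal property of the quotient topological vector space recalled in \eqref{diag-quotient}, the factored map $H^q(E)\to H^q(F)$ is automatically continuous. This factored map is exactly $f^q_*$ by its definition. There is essentially no obstacle here beyond unpacking the definitions; the only point worth emphasizing is that one must invoke the universal property in the topological (not merely algebraic) sense, which is valid in the present setting because $H^q(E)$ carries its quotient topology (equivalently, the topology induced by its semi-inner product, by Proposition~\ref{prop-quotient}).
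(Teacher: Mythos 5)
Your proof is correct and follows essentially the same route as the paper: both arguments note that $\pi_F\circ f^q$ is continuous on $Z^q(E)$ and vanishes on $B^q(E)$, then invoke the universal property of the quotient topology from diagram~\eqref{diag-quotient} to conclude that the induced map $f^q_*$ is continuous. You simply spell out a few more of the routine verifications (that $f^q$ preserves cocycles and coboundaries) that the paper leaves implicit.
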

\begin{proof}
Consider the following  diagram, where $\pi_E$ and $\pi_F$ denote the natural continuous projections onto the cohomology groups, and  which commutes by the definition of the induced map $f^q_*$:
	\[
	\begin{tikzcd}
	Z^q(E)\arrow[r,"f^q"] \arrow[d, "\pi_E"] \arrow[dr, dashed, "\pi_F\circ f^q"]
	& Z^q(F) \arrow[d,"\pi_F"]
	\\
	H^q(E)
	\arrow[r,swap,"f^q_*"]
	& H^q(F).
	\end{tikzcd}
	\]
	Using the universal property of quotients as in diagram \eqref{diag-quotient}, since $\pi_F\circ f^q$ is a continuous map from $Z^q(E)$ to $H^q(F)$ which vanishes on $B^q(E)$ it follows that the induced map $f^q_*$ is continuous. \end{proof}

%%%%%%%%%%%%%%%%%%%%
\bibliographystyle{alpha}
\bibliography{alexander}

\end{document}